\def\Cal{\mathcal}
\def\ot{\leftarrow} 
\def\<<{\langle } 
\def\>>{\rangle }
\numberwithin{equation}{section} 
\newtheorem{theorem}{Theorem}[section] 
\newtheorem{proposition}[theorem]{Proposition} 
\newtheorem{corollary}[theorem]{Corollary} 
\newtheorem{definition}[theorem]{Definition} 
\newtheorem{remark}[theorem]{Remark} 
\newtheorem{lemma}[theorem]{Lemma} 
\newtheorem{example}[theorem]{Example}
\def\Ker{\operatorname{Ker}} 
\def\Coker{\operatorname{Coker}}
\def\<{\langle} 
\def\>{\rangle}
\begin{document} 
 
\title{DG-category and Simplicial bar complex} 
 
\author{Tomohide Terasoma}

\maketitle 
 
\makeatletter 
\renewcommand{\@evenhead}{\tiny \thepage \hfill 
DG category and bar complex
\hfill}

\renewcommand{\@oddhead}{\tiny \hfill Tomohide Terasoma
 \hfill \thepage} 
\tableofcontents

\section{Introduction and conventions}
\subsection{Introduction}
In the paper \cite{C}, Chen defined 
a bar complex of an associative differential graded algebra
which computes the real homotopy type of a $C^{\infty}$-manifold.
There he proved that the Hopf algebra of the dual of 
the nilpotent completion
of the group ring $\bold R[\pi_1(X,p)]\hat\ $
of the fundamental group $\pi_1(X,p)$
is canonically isomorphic to the 0-th cohomology of
the bar complex of the differential graded algebra $A^{\bullet}(X)$
of smooth differential forms on $X$.

There are several methods to construct mixed Tate motives over a field
$K$. 
First construction is due to Bloch and Kriz \cite{BK1}, base on the bar
complex of the differential graded algebra of Bloch cycle complex.
They defined the category mixed Tate motives as the category of
comodules over the cohomology of the bar complex. 
Another construction
is due to Hanamura \cite{H}. 
In the book of Kriz-May \cite{KM},
they used a homotopical approach to define the category
of mixed Tate motives.
Hanamura used some generalization of complex 
to construct the derived category of mixed Tate motives.
One can formulate this construction of complex
in the setting of DG category,
which is called DG complex in this paper.
DG complexes is called twisted complexes in 
a paper of Bondal-Kapranov \cite{BK2}.
In paper \cite{Ke1}, similar notion of perfect 
complexes are introduced.
A notion of DG categories are also useful to study cyclic homology. 
(See \cite{Ke1}, \cite{Ke2}).

In this paper,
we study two categories, one is the category of comodules over the
bar complex of a differential graded algebra
$A$ and the other is the category of DG complexes
of a DG category arising from the differential graded algebra.
Roughly speaking, we show that these two categories are
homotopy equivalent (Theorem \ref{first main theorem}).
We use this equivalence to construct a certain coalgebra
which classifies nilpotent variation of mixed Tate Hodge
structres on an algebraic varieties $X$ 
(Theorem \ref{theorem bar and NVMTHS}). This coalgebra is 
isomorphic to
the coordinate ring of the Tanakian category of mixed 
Tate Hodge structures when $X=Spec(\bold C)$.

Bar construction is also used to construct the motives
associated to rational fundamental groups 
of algebraic varieties
in \cite{DG}. They used another type of bar construction due to
Beilinson. In this paper, we adopt this bar construction,
called simplicial bar construction, for differential graded algebras.
Simplicial bar complexes depend on the choices of two augmentations
of the differential graded algebras. 
If these two augmentations happen
to be equal, then the simplicial bar complex is quasi-isomorphic to 
the classical reduced
bar complex defined by Chen.

Let me explain in the case of the DGA of smooth differential
forms $A^{\bullet}$ of a smooth manifold $X$.
To a point $p$ of $X$, we can associate an augmentation
$\epsilon_p:A^{\bullet}\to \bold R$.
In Chen's reduced bar complexes,
the choice of the augmentation,
reflects the choice of the base point 
$p$ of the rational fundamental group.
On the other hand, simplicial bar complex depends on
two augmentations $\epsilon_1$ and $\epsilon_2$.
The cohomology of the simplicial bar complex of $A^{\bullet}$ 
with respect to the two augmentations arising from two points $p_1$ and
$p_2$ of $X$ is identified with the nilpotent dual of the linear hull
of paths connecting the points $p_1$ and $p_2$.

By applying simplicial bar construction
with two augmentations of cycle DGA 
arising from two realizations,
we obtain the dual of the space generated by
functorial isomorphism between
two realization functors. A comparison theorem gives a
path connecting these two realization functors.
Using this formalism, we treat
the category of variation of mixed Tate Hodge structures 
over smooth algebraic varieties. In this paper,
we show that the category of comodules over
the bar complex of the differential graded algebra of 
the Deligne complex of an algebraic variety $X$
is equivalent to the category of nilpotent variations of 
mixed Tate Hodge structures on $X$.

\subsection{Conventions}
\label{convention}
Let $\bold k$ be a field.
Let $\Cal C$ be a $\bold k$-linear abelian category with a tensor structure.
The category of complexes in $\Cal C$ is denoted as $K\Cal C$.
For objects
$A=(A^{\bullet},\delta_A)$ and
$B=(B^{\bullet},\delta_B)$ in $K\Cal C$, we define tensor product $A\otimes B$
as an object in $K\Cal C$
by the rule $(A\otimes B)^p=\oplus_{i+j=p}A^i\otimes B^j$.
The differential $d_{A\otimes B}$ on $A^i\otimes B^j$ is defined 
by 
$d_{A\otimes B}=\delta_A \otimes 1_B+(-1)^{i}1_A \otimes \delta_B$.
An element 
$$
\underline{Hom}^p_{K\Cal C}(A,B)=\prod_{i}Hom_{\Cal C}(A^i, B^{i+p})
$$ 
is called a homogeneous
homomorphism of degree $p$ from $A$ to $B$ in $K\Cal C$.
Let $A,A',B,B'\in K\Cal C$ and $\varphi=(\varphi_i)_i\in 
\underline{Hom}^p_{K\Cal C}(A,B)$ and 
$\psi=(\psi_j)_j\in \underline{Hom}^q_{K\Cal C}(A',B')$,
we define $\varphi\otimes \psi\in \underline{Hom}^{p+q}_{K\Cal C}
(A\otimes A',B\otimes B')$ by setting
$(\varphi\otimes \psi)_{i+j}=(-1)^{qi}\varphi_i\otimes \psi_j$
on $A^i\otimes {A'}^j\to B^{i+p}\otimes
{B'}^{j+q}$. (To remember this formula, the rule 
$(\varphi\otimes \psi)(a\otimes a')=
(-1)^{\deg(a)\deg(\psi)}\varphi(a)\otimes \psi(a')$
is useful.) An object $M$ in $\Cal C$ is regarded as an object
in $K\Cal C$ by setting $M$ at degree zero part.
For a complex $A\in K\Cal C$, we define the complex $A[i]$
by $A[i]^j=A^{i+j}$, where the differential is
defined through this isomorphism.
The shift $\bold k[i]$ of the unit object $\bold k$ 
is defined in this manner.
The homogeneous morphism 
$\bold k[i]\to \bold k[j]$ of degree $i-j$, whose degree $-i$ part
$\bold k[i]^{-i}=\bold k\to \bold k[j]^{-j}=\bold k$
is defined by the identity map, is denoted as $t_{ji}$. 
The degree $-i$ element ``$1$'' of $\bold k[i]$ is denoted as $e^i$.
For an object $B\in K\Cal C$,
the tensor complex $B\otimes \bold k[i]$ is denoted as $Be^i$.
For objects $A,B\in K\Cal C$ and 
$\varphi\in \underline{Hom}_{K\Cal C}^p(A,B)$,
a homomorphism $\varphi\otimes t_{ji}
\in \underline{Hom}_{K\Cal C}(Ae^i,Be^j)$
is a degree $(p+i-j)$ homomorphism.
As a special case, for $\varphi\in \underline{Hom}_{K\Cal C}^0(A, B)$,
the map 
$\varphi\otimes t_{i-1,i}\in \underline{Hom}_{K\Cal C}^1(Ae^i,Be^{i-1})$
is a degree one element. It is denoted as $\varphi\otimes t$ for simplicity.
The differential of $M$ can be regarded as a degree one map from 
$M$ to itself. Therefore $d$ can be regarded as an element in
$\underline{Hom}_{K\Cal C}^1(M, M)$.

An object in $KK\Cal C$ can be considered as a double complex
in $\Cal C$. Let
$(\cdots A^{\bullet i}\overset{d}\to A^{\bullet i+1}\to \cdots)$
be an object in $KK\Cal C$.
Since $d$ is a homomorphism of complex, we have
$$
d\otimes t\in
\underline{Hom}^1_{K\Cal C}(A^{\bullet,j}e^{-j},A^{\bullet,j+1}e^{-j-1}).
$$
is a degree one element in $K\Cal C$.
Let $\delta\otimes 1$ be the differential of $A^{\bullet i}e^{-i}\in K\Cal C$. 
The summation of $\delta\otimes 1$ for $i$ is also denoted as 
$\delta\otimes 1$.
Then the degree one map $\delta\otimes 1+d\otimes t$ 
becomes a differential on the total graded object. 
Here $\delta\otimes 1$ is called the inner differential and
$d\otimes t$ is called the outer differential.
Note that for an ``element'' $a\in A^{ij}$,
we have $(d\otimes t)(a\otimes e^{-j})=(-1)^id(a)\otimes e^{-j-1}$,
which coincides with the standard sign convention of 
the associate simple complex of a double complex.

The resulting complex
is called the associate simple complex of $A^{\bullet\bullet}\in KK\Cal C$
and denoted as $s(A)=s(A^{\bullet,\bullet})$.
For objects $A=A^{\bullet,\bullet}, B^{\bullet,\bullet}\in KK\Cal C$, 
the tensor
product $A\otimes B$ is defined as an object in $KK\Cal C$.
Then we have a natural isomorphism in $K\Cal C$ 
\begin{equation}
\label{tensor of double complex to simple complex}
\nu:s(A)\otimes s(B)\simeq s(A\otimes B)
\end{equation}
defined by $\nu(ae^{-j}\otimes be^{-j'})=(-1)^{ji'}(a\otimes b)e^{-j-j'}$ for
$ae^{-j}\in A^{ij}e^{-j},be^{-j'}\in B^{i'j'}e^{-j'}$. 
This isomorphism is compatible with
the natural associativity isomorphism.


\section{DG category}
\subsection{Definition of DG category and examples}
Let $\bold k$ be a field.
A DG category $\Cal C$ over $\bold k$ consists of the following
data
\begin{enumerate}
\item A class of objects $ob(\Cal C)$.
\item
A complex 
$
\underline{Hom}_{\Cal C}^{\bullet}(A,B)=
(\underline{Hom}_{\Cal C}^{\bullet}(A,B),\partial)
$
of $\bold k$ vector spaces
for every objects $A$ and $B$ in $ob(\Cal C)$.
\vskip 0.1in
\noindent
We sometimes impose the following shift structure on $\Cal C$.

\item
Bijective correspondence $T:\Cal C\mapsto \Cal C$ for objects in $\Cal C$.
An object
$T^k(A)$ in $\Cal C$ is denoted as $Ae^k$ for $k\in \bold Z$.
\end{enumerate}
with the following axioms.
\begin{enumerate}
\item
For three objects $A,B$ and $C$ in $\Cal C$, the composite
$$
\underline{Hom}_{\Cal C}^{\bullet}(B,C)
\otimes \underline{Hom}_{\Cal C}^{\bullet}(A,B)
\to
\underline{Hom}_{\Cal C}^{\bullet}(A,C)
$$
is defined as a homomorphism of complexes over $\bold k$.
\item
The above composite homomorphism is associative.
That is, the following diagram of complexes commutes.
$$
\begin{matrix}
\underline{Hom}_{\Cal C}^{\bullet}(C,D)\otimes
\underline{Hom}_{\Cal C}^{\bullet}(B,C)\otimes 
\underline{Hom}_{\Cal C}^{\bullet}(A,B)
& \to &
\underline{Hom}_{\Cal C}^{\bullet}(C,D)\otimes
\underline{Hom}_{\Cal C}^{\bullet}(A,C) \\
\downarrow & & \downarrow \\
\underline{Hom}_{\Cal C}^{\bullet}(B,D)\otimes 
\underline{Hom}_{\Cal C}^{\bullet}(A,B)
& \to &
\underline{Hom}_{\Cal C}^{\bullet}(A,D) 
\end{matrix}
$$
\item
There is a degree zero closed element $id_A$ in 
$\underline{Hom}^0(A,A)$ for each $A$,
which is a left and right identity under the above 
composite homomorphism.
\vskip 0.1in
\noindent
If we assume the shift structure $T$, the following sign convention
should be satisfied.

\item
There is a natural isomorphism of complexes
$$
\underline{Hom}_{\Cal C}^{\bullet}(A,B)[-i+j]\simeq
\underline{Hom}_{\Cal C}^{\bullet}(Ae^i,Be^j):
\varphi\mapsto \varphi\otimes t_{ji}
$$
satisfying the rule
$(\varphi\otimes t_{ji})\circ(\psi\otimes t_{ik})
=(-1)^{(i-j)\deg(\psi)}(\varphi\circ\psi)\otimes t_{jk}$.
(It is compatible with the formal 
commutation rule for $\psi$ and $t_{ji}$.)
\end{enumerate}
\begin{definition}
\begin{enumerate}
\item
Let $\Cal C$ be a DG category and $a,b$ objects in $\Cal C$.
A closed morphism $\varphi:a\to b$ of degree $0$
(i.e. $\partial\varphi=0$) is called an isomorphism if
there is a closed morphism $\psi$ of degree zero such that
$\psi\circ \varpi=1_a,\varphi\circ \psi=1_b$.
\item
Let $\Cal C_1,\Cal C_2$ be DG categories. A DG functor $F$ is 
a collection $\{F(a)\}_a$ of objects in $\Cal C_2$ indexed by objects
in $\Cal C_1$ and a collection $\{F_{a,b}\}$ of homomorphisms
of complexes 
$F_{a,b}:\underline{Hom}_{\Cal C_1}^{\bullet}(a,b)
\to\underline{Hom}_{\Cal C_2}^{\bullet}(F(a),F(b))$
indexed by $a,b\in \Cal C_1$, which preserves the composites,
identities and degree shift operator $A\mapsto A[1]$.
We define sub DG categories
and full sub DG categories similarly as in usual categories.
We also define essentially 
surjective functors as in the usual category.
A functor is equivalent
if and only if it is essentially surjective and fully faithful.
\item
A DG functor $F:\Cal C_1 \to \Cal C_2$ is said to be homotopy
equivalent if and only if it is essentially surjective, and
the induced map
$$
H^i(F_{a,b}):H^i(\underline{Hom}_{\Cal C_1}^{\bullet}(a,b))
\to H^i(\underline{Hom}_{\Cal C_2}^{\bullet}(F(a),F(b)))
$$
is an isomorphism for all $i\in \bold Z$ and $a,b\in \Cal C_1$.

\end{enumerate}
\end{definition}
\begin{example}
Let $Vec_{\bold k}$ be a category of $\bold k$-vector spaces. 
The category of complexes
of $\bold k$-vector spaces is denoted as $KVec_{\bold k}$.
Then $KVec_{\bold k}$ becomes a DG category by setting
$$
\underline{Hom}_{KVec_{\bold k}}^{p}(A,B)=\prod_{i}Hom(A^i,B^{i+p})
$$
for complexes $A=A^{\bullet}$ and $B=B^{\bullet}$.
The differential $\partial \varphi$ of an element 
$\varphi=(\varphi_{i})_{i}\in 
\underline{Hom}_{KVec_{\bold k}}^p(A,B)$ 
is defined by the formula
\begin{equation}
\label{differential formula}
(\partial(\varphi))_{i}=
d_B\circ\varphi_{i} - (-1)^{p}\varphi_{i+1}\circ d_A.
\end{equation}
Therefore $\varphi\in \underline{Hom}_{KVec_{\bold k}}^0(A,B)$ is a homomorphism
of complexes if and only if $\partial(\varphi)=0$.
Two homomorphisms of complexes $\varphi$ and $\psi$ are homotopic
to each other
by the homotopy $\theta$ if and only if
$\varphi-\psi =\partial(\theta)$ with
$\theta \in \underline{Hom}_{KVec_{\bold k}}^{-1}(A,B)$.
\end{example}

\begin{definition}[DG category associated with a DGA]
Let $A=A^{\bullet}$ be a unitary associative 
differential graded algebra (denoted as DGA for short)
over a filed $\bold k$ with the multiplication 
$\mu:A^\bullet \otimes A^\bullet \to A^{\bullet}$.
We define a DG category $\Cal C_A$ associated to $A$ as
follows. 
\begin{enumerate}
\item
An object of $\Cal C_A$ is a complex $V=V^{\bullet}$
of vector spaces over $\bold k$.
\item
For two objects $V=V^{\bullet}$ and 
$W=W^{\bullet}$, the set of morphisms $\underline{Hom}^p_{\Cal C_A}(V,W)$
is defined as
$$
\underline{Hom}^{\bullet}_{\Cal C_A}(V,W)=
\underline{Hom}_{KVec_{\bold k}}(V^{\bullet},
A^{\bullet}\otimes W^{\bullet}).
$$
Then $\underline{Hom}_{\Cal C_A}^{\bullet}(V,W)$ becomes a complex
by the formula (\ref{differential formula}) and the structure of
tensor complex $A^{\bullet}\otimes W^{\bullet}$ 
defined in (\ref{convention}).

\item
For three objects $U=U^{\bullet},V=V^{\bullet}$ and 
$W=W^{\bullet}$, we define the composite
\begin{align*}
\mu: & \underline{Hom}_{\Cal C_A}^{\bullet}(V^{\bullet},W^{\bullet})\otimes
\underline{Hom}_{\Cal C_A}^{\bullet}(U^{\bullet},V^{\bullet}) 
\to
\underline{Hom}_{\Cal C_A}^{\bullet}(U^{\bullet},W^{\bullet}) 
\end{align*}
by the composite of the following homomorphisms of complexes:
$$
\begin{matrix}
\underline{Hom}_{KVec_{\bold k}}(V^{\bullet},
A^{\bullet}\otimes W^{\bullet})\otimes
\underline{Hom}_{KVec_{\bold k}}(U^{\bullet},
A^{\bullet}\otimes V^{\bullet}) \\
\downarrow \\
\underline{Hom}_{KVec_{\bold k}}(A^{\bullet}\otimes V^{\bullet},
A^{\bullet}\otimes A^{\bullet}\otimes W^{\bullet})\otimes
\underline{Hom}_{KVec_{\bold k}}(U^{\bullet},
A^{\bullet}\otimes V^{\bullet}) \\
\downarrow \\
\underline{Hom}_{KVec_{\bold k}}(U^{\bullet},
A^{\bullet}\otimes A^{\bullet}\otimes W^{\bullet}) \\
\qquad \downarrow \mu\otimes 1 
\\
\underline{Hom}_{KVec_{\bold k}}(U^{\bullet},
A^{\bullet}\otimes W^{\bullet})
\end{matrix}
$$
\end{enumerate}
\end{definition}

\begin{remark}
Let $\Cal C$ be a DG category and $M$ be an object of  $\Cal C$.
Then $\underline{End}_{\Cal C}^{\bullet}(M)
=\underline{Hom}_{\Cal C}^{\bullet}(M,M)$
becomes an (associative) differential graded algebra.
We have $\underline{End}_{\Cal C_A}^{\bullet}(\bold k)\simeq (A^{\bullet})^{op}$
as DGA's.
Note that $(A^{\bullet})^{op}$ is a 
copy of $A^{\bullet}$ as a complex and the multiplication rule is
given by $a^{\circ}\cdot b^{\circ}=(-1)^{\deg(a)\deg(b)}(b\cdot a)^{\circ}$
\end{remark}

\subsection{DG complexes}
We introduce the notion of complexes in the setting of
DG category, which is called DG complexes.

\begin{definition}
\begin{enumerate}
\item
Let $\Cal C$ be a DC category.
A pair $(\{M^i\}_{i\in \bold Z},\{d_{ij}\}_{i>j})$
consisting of (1) a series of objects $\{M^i\}_{i\in\bold Z}$
 in $\Cal C$ indexed by $\bold Z$, 
and (2) a series of morphisms 
$$
d_{ij}\in \underline{Hom}_{\Cal C}^{j-i+1}(M^j,M^i)
$$
indexed by $i>j$ in $\bold Z$
is called a DG complex in $\Cal C$ if it
satisfies the following equality.
\begin{equation}
\label{condition for DG complex}
\partial(d_{ij})+\sum_{i>p>j}(-1)^{(i-p)(p-j+1)}d_{ip}\circ d_{pj}=0.
\end{equation}
The uncomfortable sign in this condition will be simplified by using
$d_{ij}^\#=d_{ij}\otimes t_{-i,-j}\in
\underline{Hom}_{\Cal C}^1(M^je^{-j},M^ie^{-i})$. Then the condition will be
\begin{equation}
\label{condition for DG complex twist}
\partial(d_{ij}^\#)+\sum_{i>p>j}d_{ip}^\#\circ d_{pj}^\#=0.
\end{equation}

\item
Let $M=(M^{\bullet}, d_M)$ and $N=(N^{\bullet}, d_N)$
be DG complexes in $\Cal C$.
We set
$$
\underline{Hom}_{K\Cal C}^i(M, N)=
\lim_{\underset{\alpha}\to}
\prod_{q-\alpha\leq r}\underline{Hom}_{\Cal C}^{i+q-r}(M^q,N^r).
$$
\item
Let $i \in \bold Z$.
For an element $\varphi\in \underline{Hom}_{K\Cal C}^i(M,N)$,
we define a map $D(\varphi) \in \underline{Hom}_{K\Cal C}^{i+1}(M, N)$
as follows. For an element
$$\varphi=(\varphi_{r,q}) \in
\prod_{q-\alpha\leq r}\underline{Hom}_{\Cal C}^{i+q-r}(M^q,N^r),
$$
we set 
$\varphi^\#_{rq}=\varphi_{rq}\otimes t_{-r,-q}\in
\underline{Hom}_{\Cal C}^i(M^qe^{-q},N^re^{-r})$ and
define
\begin{align*}
D(\varphi)_{r,q}^\#=& \partial(\varphi_{r,q}^\#) +\sum_{q-\alpha\leq r'<r}
d_{r,r'}^\#\circ \varphi_{r',q}^\# \\
&-(-1)^i\sum_{q<q'\leq r+\alpha}
\varphi_{r,q'}^\# \circ d_{q',q}^\# \\
&
\in \underline{Hom}_{\Cal C}^{i+1}(M^qe^{-q},N^re^{-r}) 
\end{align*}
This map defines a homomorphism 
$D:\underline{Hom}_{K\Cal C}^i(M, N)\to 
\underline{Hom}_{K\Cal C}^{i+1}(M, N)$, 
and the space $\underline{Hom}_{K\Cal C}^{\bullet}(M, N)$
becomes a complex of $\bold k$-vector spaces.
\item
Let $(L,d_L),(M,d_M)$ and $(N,d_N)$ be DG complexes.
We define the composite
$$
\begin{matrix}
\underline{Hom}_{K\Cal C}^{i}(M,N)\otimes
\underline{Hom}_{K\Cal C}^{j}(L,M) & \to &
\underline{Hom}_{K\Cal C}^{i+j}(L,N) \\
\psi\otimes \varphi & \mapsto & \psi\circ\varphi
\end{matrix}.
$$
by the relation
$$
(\psi\circ\varphi)_{p,q}^\#=\sum_{r}
\psi_{p,r}^\#\circ\varphi_{r,q}^\#\in\underline{Hom}^{i+j}_{\Cal C}
(L^qe^{-q},N^pe^{-p}).
$$
We can check that the the above composite is 
a well defined homomorphism of complexes
and associative.
\item
Let $M=(\{M^i\},\{d_{ij}\})$ be a DG complex.
If $M^i=0$ except for a finite number of $i$'s, 
it is called a bounded
DG complex.
\end{enumerate}
\end{definition}
It is easy to check the following proposition.
\begin{proposition}
Via the above differentials and the composites, 
the DG complexes in the
DG category $\Cal C$ becomes a DG category.  
\end{proposition}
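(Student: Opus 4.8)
The plan is to verify, one by one, the axioms of a DG category over $\bold k$ for the collection of DG complexes in $\Cal C$, equipped with the $\underline{Hom}^{\bullet}_{K\Cal C}$ just defined, the differential $D$, the composite $\circ$, and the shift $M\mapsto Me^{1}$. The organizing observation, which trivializes essentially every sign, is that in the twisted notation the formula for $D$ reads
\[
D(\varphi)^{\#}=\partial(\varphi^{\#})+d_N^{\#}\circ\varphi^{\#}-(-1)^{i}\,\varphi^{\#}\circ d_M^{\#},
\]
where for a DG complex $M$ we write $d_M^{\#}$ for the total morphism $\sum_{p>q}d_{pq}^{\#}$ acting on the (suitably completed) object $\bigoplus_q M^q e^{-q}$, and similarly for $N$. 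Condition (\ref{condition for DG complex twist}) says exactly that $d_M^{\#}$ and $d_N^{\#}$ are Maurer--Cartan elements, $\partial(d^{\#})+d^{\#}\circ d^{\#}=0$, so that $D$ is the associated twisted differential and $\circ$ its graded multiplication. Once this is recognized, the axioms are the standard formal consequences, and the only real work is keeping track of the ranges of summation forced by the limit over $\alpha$ in the definition of $\underline{Hom}^{\bullet}_{K\Cal C}$.

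First I would settle well-definedness. For a representative $(\varphi_{r,q})$ at level $\alpha$, i.e.\ supported on $r\ge q-\alpha$, each of the sums occurring in $D(\varphi)^{\#}_{r,q}$ ranges over the finite sets $q-\alpha\le r'<r$ and $q<q'\le r+\alpha$, and $D(\varphi)$ is again supported at level $\alpha$; similarly $(\psi\circ\varphi)^{\#}_{p,q}=\sum_{r}\psi^{\#}_{p,r}\circ\varphi^{\#}_{r,q}$ is a finite sum, since $\psi$ at level $\alpha_\psi$ forces $r\le p+\alpha_\psi$ while $\varphi$ at level $\alpha_\varphi$ forces $r\ge q-\alpha_\varphi$, and the product is supported at level $\alpha_\psi+\alpha_\varphi$. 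Hence both operations are compatible with the transition maps of the limit and descend to $\underline{Hom}^{\bullet}_{K\Cal C}$; a degree count (recall $\varphi^{\#}_{r,q}$ has degree $i$ and $d^{\#}$ has degree $1$) shows $D$ raises degree by one and $\circ$ is additive in degrees. I would also note that the one-sided operations $d_N^{\#}\circ(-)$ and $(-)\circ d_M^{\#}$ make sense on $\underline{Hom}^{\bullet}_{K\Cal C}$ even though $d_M^{\#}$ itself need not lie in $\underline{Hom}^{1}_{K\Cal C}(M,M)$ unless $M$ is bounded.

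The first genuine axiom is $D\circ D=0$, and this is the step I expect to be the main obstacle — not conceptually, but because it is the one place where sign bookkeeping and summation ranges must be handled with care. I would expand $D(D(\varphi))^{\#}_{r,q}$ and sort the terms into three groups: the pure $\partial^2$ term, which vanishes since $\partial^2=0$ in $\Cal C$; the terms containing one $d^{\#}$ and one $\partial$, which by the Leibniz rule $\partial(f\circ g)=\partial(f)\circ g+(-1)^{\deg f}f\circ\partial(g)$ (valid because composition in $\Cal C$ is a morphism of complexes) recombine with the quadratic terms $d_N^{\#}\circ d_N^{\#}\circ\varphi^{\#}$ and $\varphi^{\#}\circ d_M^{\#}\circ d_M^{\#}$ precisely through (\ref{condition for DG complex twist}) for $N$ and for $M$; and the mixed terms $d_N^{\#}\circ\varphi^{\#}\circ d_M^{\#}$, which occur twice with opposite signs and cancel. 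The sign $(-1)^{i}$ attached to the right-hand sum in the definition of $D$ is exactly what makes these cancellations clean, and the passage to $d^{\#}$ is what replaces the awkward sign $(-1)^{(i-p)(p-j+1)}$ of (\ref{condition for DG complex}) by no sign at all.

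The remaining axioms are routine by the same expand-and-regroup method. That $\circ$ is a morphism of complexes, $D(\psi\circ\varphi)=D(\psi)\circ\varphi+(-1)^{i}\psi\circ D(\varphi)$ for $\psi$ of degree $i$, follows because the terms $d^{\#}\circ\psi^{\#}\circ\varphi^{\#}$ appear on both sides with matching signs, the $\partial$-terms match by Leibniz, and the internal terms $\psi^{\#}\circ d_M^{\#}\circ\varphi^{\#}$ cancel between the two summands on the right. Associativity is immediate, both $(\chi\circ\psi)\circ\varphi$ and $\chi\circ(\psi\circ\varphi)$ equalling $\sum_{r,s}\chi^{\#}_{p,s}\circ\psi^{\#}_{s,r}\circ\varphi^{\#}_{r,q}$ by associativity in $\Cal C$ and the absence of signs in the $\#$-picture. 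For the unit I would take $\mathrm{id}_M\in\underline{Hom}^{0}_{K\Cal C}(M,M)$ with $(\mathrm{id}_M)^{\#}_{p,q}=\delta_{p,q}\,\mathrm{id}_{M^q e^{-q}}$; then $D(\mathrm{id}_M)=0$ since $\partial(\mathrm{id})=0$ and, for $r>q$, the two surviving sums each contribute $d^{\#}_{r,q}$ with opposite signs, while $\mathrm{id}_M$ is a two-sided unit for $\circ$ by the collapse $\sum_{r}(\mathrm{id}_N)^{\#}_{p,r}\circ\varphi^{\#}_{r,q}=\varphi^{\#}_{p,q}$. Finally, the shift axiom (the isomorphism $\underline{Hom}^{\bullet}_{K\Cal C}(M,N)[-i+j]\simeq\underline{Hom}^{\bullet}_{K\Cal C}(Me^{i},Ne^{j})$ with the stated composition rule) is inherited componentwise from the corresponding isomorphism in $\Cal C$ together with $t_{ji}\circ t_{ik}=t_{jk}$, so it carries over formally.
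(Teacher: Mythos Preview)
Your proof is correct and considerably more detailed than what the paper offers: the paper gives no proof at all, merely prefacing the proposition with ``It is easy to check the following proposition.'' Your organization via the $\#$-notation and the Maurer--Cartan interpretation of $d_M^{\#}$ is exactly the simplification the paper sets up for (this is why it introduces condition~(\ref{condition for DG complex twist}) immediately after the raw form~(\ref{condition for DG complex})), and your treatment of the finiteness and support issues coming from the inductive limit over $\alpha$ is careful and accurate, including the point that $d_M^{\#}\circ(-)$ makes sense on the Hom-complex even when $d_M^{\#}$ is not itself an element of $\underline{Hom}^1_{K\Cal C}(M,M)$.
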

\begin{definition}[DG cagegory of DG complexes]
The DG category of DG complexes defined as above is denoted as
$K\Cal C$.
The full sub DG category of bounded DG complexes in 
$K\Cal C$ is denoted as $K^b\Cal C$.
\end{definition}

\section{Topological motivation for simplicial bar complex}

\subsection{The space of marked path}
Before introducing simplicial bar complexes, we give a topological
motivation for Simplicial bar complexes. Let $X$ be a 
connected $C^{\infty}$ manifold with finite dimensional homology. 
For a sequence of integers 
$\alpha=(\alpha_0< \cdots <\alpha_n)\in \bold Z^{n+1}$, we define an 
$\alpha$-marking of $\bold R$ by
$$
t_{-,\alpha_0}\leq t_{\alpha_0,\alpha_1}\leq 
t_{\alpha_1,\alpha_2}\leq \cdots \leq 
t_{\alpha_{n-1},\alpha_n}\leq 
t_{\alpha_{n},+}
\in\bold R.
$$
For two indices $\alpha=(\alpha_0<\cdots <\alpha_n)$ and
$\beta=(\beta_0< \cdots < \beta_m)$, we write $\alpha < \beta$
if $\alpha \subset \beta$.
Let $\alpha, \beta$ be two indices such that $\alpha< \beta$.
For an $\alpha$-marking $\bold t$, we define a $\beta$-marking
$\bold u=i_{\alpha,\beta}(\bold t)$ by setting $u_{\beta_{j}\beta_{j+1}}=
t_{\alpha_i\alpha_{i+1}}$ where $\alpha_i\leq \beta_j$ and
$\beta_{j+1}\leq \alpha_{i+1}$.
For example, if $\alpha=(0,2),\beta=(0,1,2,3)$, then we have
$$
\begin{matrix}
\alpha_0=0 & & < & & \alpha_1=2 & & \\
\Vert & & & &\Vert &  & \\
\beta_0=0 &< &\beta_1=1 &< 
&\beta_2=2 &< &\beta_3=3
\end{matrix}
$$
and we have $u_{-,0}=t_{-,0}$, $u_{0,1}=u_{1,2}=t_{0,2}$,
 $u_{2,3}=u_{3,+}=t_{2,+}$.
The interval $[t_{\alpha_{i-1},\alpha_{i}},
t_{\alpha_{i},\alpha_{i+1}}]$
(it might be a one point)
is denoted as $I_{\alpha_i}(\bold t)$.
The intervals $(-\infty,t_{-,\alpha_0}]$ and
$[t_{\alpha_n,+},\infty)$ are denoted as 
$I_-(\bold t)$ and $I_+(\bold t)$,
respectively.

\setlength{\unitlength}{0.65mm}
\begin{picture}(200,50)(-20,-15)
\put(-20,10){\line(1,0){160}}
\put(10,8){\line(0,1){4}}
\put(45,8){\line(0,1){4}}
\put(105,8){\line(0,1){4}}
\put(-10, 0){$I_{-}(\bold u)$}
\put(20, 0){$I_{0}(\bold u)$}
\put(40, -8){$I_{1}(\bold u)$}
\put(60, 0){$I_{2}(\bold u)$}
\put(100, -8){$I_{3}(\bold u)$}
\put(120, 0){$I_{+}(\bold u)$}
\put(-10, 20){$I_{-}(\bold t)$}
\put(20, 20){$I_{0}(\bold t)$}
\put(60, 20){$I_{2}(\bold t)$}
\put(120, 20){$I_{+}(\bold t)$}
\put(45,0){\vector(0,1){10}}
\put(105,0){\vector(0,1){10}}
\put(0,14){$u_{-,0}=t_{-,0}$}
\put(40,14){$u_{0,1}=u_{1,2}=t_{0,2}$}
\put(100,14){$u_{2,3}=u_{3,+}=t_{2,+}$}
\end{picture}
\begin{center}
{\bf Figure 1. }
\end{center}
\vskip 0.2in

We define the set of $\alpha$-marked path
$P_{\alpha}(X)$ by
\begin{align*}
P_{\alpha}(X)=
\{(\gamma,\bold t)\mid &
\bold t \text{ is an $\alpha$-marking of }\bold R, \\
& \gamma:\bold R \to X \text{ is a path such that } \\
& \gamma(I_{-}(\bold t)), \gamma(I_{+}(\bold t))
\text{ are constants.}
\}.
\end{align*}
For a marked path $(\gamma,\bold t) \in P_{\alpha_0,\cdots ,\alpha_n}$,
$\gamma(I_{-}(\bold t))$ and
$\gamma(I_{+}(\bold t))$ are called the beginning point and the ending point,
respectively.
By evaluating the map at $t_{\alpha_i,\alpha_{i+1}}$, we have
an evaluation map $ev_{\alpha_0,\cdots ,\alpha_n}$ 
$$
P_{\alpha_0,\cdots ,\alpha_n}(X) \to X^{n+2}:(\gamma,\bold t)\mapsto
(\gamma(t_{-,\alpha_0}),
\gamma(t_{\alpha_0,\alpha_1}),\dots, \gamma(t_{\alpha_{n-1},\alpha_n}), 
\gamma(t_{\alpha_{n},+}))
$$
The product $X^{n+2}$ of $(n+2)$-copy of $X$ appeared in 
$ev_{\alpha_0,\cdots ,\alpha_n}$ is denoted as
$$
X_{\alpha_0,\dots, \alpha_n}=
X\overset{\alpha_0}\times X \overset{\alpha_1}\times \cdots
\overset{\alpha_{n-1}}\times X \overset{\alpha_{n}}\times X.
$$
The coordinates of $X_{\alpha_0,\dots,\alpha_n}$
are denoted as 
$$
(\tau_{-,\alpha_0},\tau_{\alpha_0,\alpha_1},
\dots, \tau_{\alpha_{n-1},\alpha_{n}}, 
\tau_{\alpha_{n},+}).
$$
We define $p_{\alpha}:X_{\alpha}\to X\times X$ by 
$\tau \mapsto (\tau_{-,\alpha_0},\tau_{\alpha_n,+})$.
We introduce a boundary maps $P_{\alpha}(X)\to P_{\beta}(X)$
for indices $\alpha < \beta$ using the map $i_{\alpha,\beta}$
defined as above. The map $f:X_{\alpha}\to X_{\beta}$
is defined by 
$(f(\tau))_{\beta_{j}\beta_{j+1}}=
\tau_{\alpha_i\alpha_{i+1}}$ by choosing $\alpha_i\leq \beta_j$ and
$\beta_{j+1}\leq \alpha_{i+1}$.
Note that $f$ is a composite of diagonal maps and it is compatible
with the maps $p_{\alpha},p_{\beta}$.
We have the following commutative diagram
for evaluation maps:
$$
\begin{matrix}
ev_{\alpha}:&P_{\alpha}(X) &\to & X_{\alpha} \\
& \downarrow &  & \downarrow \\
ev_{\beta}:&P_{\beta}(X) & \to& X_{\beta}
\end{matrix}
$$

\subsection{Chain complex associated to the system of marked paths}
The system $\{P_{\alpha}\}_{\alpha}$ and $\{X_{\alpha}\}_{\alpha}$
induces the following homomorphisms of double complexes
of singular chains by taking suitable summation with suitable sign.
$$
\begin{matrix}
\bold C_{\bullet}(P_{\bullet}):
\underset{\alpha_0}\prod C_{\bullet}(P_{\alpha_0}(X))& \to & 
\underset{\alpha_0<\alpha_1}\prod C_{\bullet}(P_{\alpha_0\alpha_1}(X)) & \to &
\underset{\alpha_0<\alpha_1<\alpha_2}\prod
C_{\bullet}(P_{\alpha_0\alpha_1\alpha_2}(X)) 
&\to\dots
\\
\downarrow & & \downarrow & &\downarrow
\\
\bold C_{\bullet}(X_{\bullet}):
\underset{\alpha_0}\prod C_{\bullet}(X_{\alpha_0})& \to & 
\underset{\alpha_0<\alpha_1}\prod C_{\bullet}(X_{\alpha_0\alpha_1}) & \to &
\underset{\alpha_0<\alpha_1<\alpha_2}\prod
C_{\bullet}(X_{\alpha_0\alpha_1\alpha_2}) 
&\to\dots
\end{matrix}
$$
Moreover, $\bold R$-valued
$C^{\infty}$ forms on $X_{\bullet}$ give rise to the following
double complex:
$$
\bold A^{\bullet}(X_{\bullet}):
\underset{\alpha_0}\oplus A^{\bullet}(X_{\alpha_0}) \ot 
\underset{\alpha_0<\alpha_1}\oplus A^{\bullet}(X_{\alpha_0,\alpha_1})  \ot 
\underset{\alpha_0<\alpha_1<\alpha_2}\oplus
A^{\bullet}(X_{\alpha_0,\alpha_1,\alpha_2}) 
\to\dots
$$

The following proposition is a consequence of 
Proposition \ref{acyclicity of free bar complex}.
\begin{proposition}
\label{acyclicity for topological setting}
\begin{enumerate}
\item
The associate simple complexes $C_{\bullet}(P_{\bullet})$ and
$C_{\bullet}(X_{\bullet})$ of
$\bold C_{\bullet}(P_{\bullet})$ and
$\bold C_{\bullet}(X_{\bullet})$ 
are acyclic.
\item
The associate simple complex  
$A^{\bullet}(X_{\bullet})$ of 
$\bold A^{\bullet}(X_{\bullet})$ is acyclic.
\end{enumerate}
\end{proposition}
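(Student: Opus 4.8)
The plan is to deduce all three statements from the general acyclicity of a free simplicial bar complex (Proposition \ref{acyclicity of free bar complex}); the real content is to recognize each of the three double complexes as an instance of that construction and to recall the mechanism that forces such complexes to be acyclic.

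First I would isolate the common structure. Each of $\bold C_\bullet(P_\bullet)$, $\bold C_\bullet(X_\bullet)$ and $\bold A^\bullet(X_\bullet)$ is built from a functor on the poset $\Cal I$ of strictly increasing finite sequences $\alpha=(\alpha_0<\cdots<\alpha_n)$ of integers ordered by inclusion --- namely $\alpha\mapsto C_\bullet(P_\alpha(X))$, $\alpha\mapsto C_\bullet(X_\alpha)$, $\alpha\mapsto A^\bullet(X_\alpha)$ --- together with the transition maps coming from the boundary maps $i_{\alpha,\beta}$ and the diagonals $f:X_\alpha\to X_\beta$ (pushforward on chains, pullback on forms); the ``$\Cal I$-direction'' is cosimplicial-type for chains and simplicial-type for forms, while the other direction carries the singular boundary, resp. the de Rham, differential. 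Since the transition maps are induced by maps of spaces they commute with the inner differential, so it suffices, for each fixed inner degree, to produce a contracting homotopy of the $\Cal I$-indexed complex which is natural enough to anticommute with the inner differential in the sense dictated by the associated-simple-complex sign conventions of \S\ref{convention}; such a homotopy then contracts the total complex.

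The contracting homotopy I would use is the standard one coming from an \emph{extra degeneracy}. Adjoining to $\alpha$ a new index $\alpha'$ placed just before the beginning marked point corresponds, on the evaluation spaces, to the ``repeat the beginning point'' map $X_\alpha\to X_{\alpha'\cup\alpha}$, $(\tau_{-,\alpha_0},\tau_{\alpha_0\alpha_1},\dots)\mapsto(\tau_{-,\alpha_0},\tau_{-,\alpha_0},\tau_{\alpha_0\alpha_1},\dots)$; and on marked paths one checks, just as with Figure 1, that the inserted interval $I_{\alpha'}$ collapses to a point and $I_-$ is unchanged, so this map is well defined and is a section of the evident projection. This exhibits the $\Cal I$-system as a split augmented (co)simplicial object, and composition with the section gives a homotopy $h$ with $Dh+hD=\mathrm{id}$. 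Because $\bold Z$ has no least element, ``adjoin a new minimal index'' must be performed inside a finite window $[-N,N]$ and one then passes to the colimit over $N$ --- which is exactly why the definition of $\underline{Hom}_{K\Cal C}$ is phrased with a $\varinjlim_\alpha$.

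I expect the main obstacle to be twofold. The first difficulty is making the extra degeneracy precise for the marked-path spaces $P_\alpha$ rather than for the plain products $X_\alpha$: the new index must be inserted adjacent to a beginning or ending marked point so that the ``constant on $I_\pm$'' condition survives and the inserted interval degenerates, and one must then check that the section and the projection are genuinely mutually inverse on the relevant subspaces of marked paths --- this is where the topological, as opposed to purely combinatorial, content sits. The second difficulty is the passage to the limit over the windows $[-N,N]$: the windowed homotopies are not strictly compatible, so one either chooses them coherently (always contracting onto the current minimal index, and checking that $Dh+hD=\mathrm{id}$ survives the infinite products $\prod_{q-\alpha\le r}$ occurring in the $\bold C$-cases) or organizes the argument as a homotopy colimit of acyclic finite complexes; in both cases the delicate step is matching the simplicial signs appearing in $D$ with those coming from the associated simple complex of a double complex.
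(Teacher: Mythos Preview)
Your approach is correct and essentially coincides with the paper's: Proposition~\ref{acyclicity of free bar complex} is proved by precisely the extra-degeneracy homotopy you describe (adjoin a new minimal index, which on the level of spaces is your ``repeat the beginning point'' map), organized via the filtration $\bold B_{>N}$ by sequences with $\alpha_0>N$ and the identity $\theta d+d\theta=i$ for the inclusion $i:\bold B_{>N}\hookrightarrow\bold B_{>N-1}$; the topological statement is then read off directly (for part~(2) the paper alternatively transfers acyclicity across the K\"unneth quasi-isomorphism $B(A^\bullet(X))\to A^\bullet(X_\bullet)$, but your direct argument works just as well). One simplification over your outline: because $\theta$ lands in the \emph{next} filtration step and satisfies $\theta d+d\theta=i$ rather than $=\mathrm{id}$, your second anticipated obstacle dissolves---there is no coherence of windowed homotopies to arrange, since any cycle in the colimit lifts to some $\bold B_{>N}$ and already bounds in $\bold B_{>N-1}$.
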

\begin{remark}
Let $p,q$ be points of $X$, and $\epsilon_p,\epsilon_q:
A^{\bullet}(X)\to \bold R$ be augmentations obtained by
evaluation maps at $p$ and $q$, respectively.
Then the cohomology of 
$$
\bold R \otimes_{A^{\bullet},\epsilon_p}
A^{\bullet}(X_{\bullet})
\otimes_{A^{\bullet},\epsilon_q}\bold R
$$
is canonically isomorphic to the dual of the
nilpotent completion of $\bold R[\pi_1(X, p,q)]$,
where $\pi_1(X,p,q)$ is the set of homotopy classes of 
paths connecting $p$ and $q$.
\end{remark}

\section{Simplicial bar complex}
\subsection{Definition of simplicial bar complex}
Under the notation of the last section, 
$A^{\bullet}(X_{\alpha_0, \dots, \alpha_n})$
is quasi-isomorphic to 
${A^{\bullet}}^{\otimes (n+2)}(X)=
A^{\bullet}(X)\otimes_{\bold R}\cdots \otimes_{\bold R} A^{\bullet}(X)$
and the restriction map $A^{\bullet}(X_\beta)\to A^{\bullet}(X_\alpha)$
is compatible with a composite of multiplication
maps of $A$.
In this section, we define the simplicial bar complex of 
a differential graded algebra $A$
by imitating the definition of $A^{\bullet}(X_{\bullet})$.

Let $\bold k$ be a field and $A=A^{\bullet}$ be a DGA (associative but
might not be graded commutative) over $\bold k$.
We define the free simplicial bar complex $B( A )$ of $A$ as follows.
For a sequence of integers $\alpha=(\alpha_0<\cdots < \alpha_n)$,
we define $\bold B_{\alpha}(A)$ as a copy of  
$A^{\otimes (n+2)}$ which is written as
$$
\bold B_{\alpha}(A)=
A\overset{\alpha_0}\otimes A \overset{\alpha_1}\otimes\cdots 
\overset{\alpha_{n-1}}\otimes A \overset{\alpha_n}\otimes A.
$$
whose element will be written as
$
x_0 \overset{\alpha_0}\otimes x_1\overset{\alpha_1}\otimes \cdots 
\overset{\alpha_{n-1}}\otimes x_n
\overset{\alpha_{n}}\otimes x_{n+1}.
$
The length of the index $\alpha$ is defined by $n$ and denoted as 
$\mid\alpha\mid$. For an index $\beta=(\beta_0, \dots, \beta_{n-1})$,
we denoted $\beta < \alpha$ if $\beta\subset \alpha$.
We define a homomorphism $d_{\beta,\alpha}:B_\alpha (A) \to B_{\beta}(A)$
for
$\beta=(\alpha_0,\dots, \widehat{\alpha_i}, \dots, \alpha_n)$
by
$$
d_{\beta,\alpha}(
x_0 \overset{\alpha_0}\otimes \cdots 
\overset{\alpha_{i-1}}\otimes  
x_{i}\overset{\alpha_{i}}\otimes x_{i+1}
\overset{\alpha_{i+1}}\otimes \cdots 
\overset{\alpha_{n}}\otimes x_{n+1}
)=(-1)^{n-i}
x_0 \overset{\alpha_0}\otimes \cdots 
\overset{\alpha_{i-1}}\otimes  
x_{i}\cdot x_{i+1}
\overset{\alpha_{i+1}}\otimes \cdots 
\overset{\alpha_{n}}\otimes x_{n+1},
$$
where ``$\cdot$'' denotes the multiplication for the DGA $A^{\bullet}$.
We define a homomorphism of complexes 
$d_{\bold B}:\oplus_{\mid \alpha\mid =n}\bold B_{\alpha}(A) \to
\oplus_{\mid \beta\mid =n-1}\bold B_{\beta}(A)$ by
\begin{equation}
\label{simp bar outer diff}
d_{\bold B}=\sum_{\mid\alpha\mid=n,\mid
\beta\mid=n-1,\beta<\alpha}d_{\beta,\alpha}.
\end{equation}
Then $d^2=0$ and we have the following double complex $\bold B(A)$:
$$
\cdots \to \oplus_{\mid \alpha\mid =2}\bold B_{\alpha}(A)
\to \oplus_{\mid \alpha\mid =1}\bold B_{\alpha}(A)
\to \oplus_{\mid \alpha\mid =0}\bold B_{\alpha}(A) \to 0
$$
The free simplicial bar complex is defined as the
associate simple complex of the above double complex
and is denoted as $B(A)$.
The inner differential of $\bold B_{\alpha}(A)$ 
is denoted as $\delta$.

\begin{proposition}
\label{acyclicity of free bar complex}
The complex $B(A)$ is an acyclic complex.
\end{proposition}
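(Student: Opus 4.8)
My plan is to prove that every cocycle of $B(A)$ is a coboundary, by a homotopy built from the unit $1\in A^{0}$, in the spirit of the standard contracting homotopy $x_{0}\otimes\cdots\otimes x_{m}\mapsto 1\otimes x_{0}\otimes\cdots\otimes x_{m}$ of the two--sided bar resolution. The subtlety is that $B(A)$ cannot be contracted by such a homotopy directly: prepending a ``smallest'' marking would need a least element of $\bold Z$, and prepending $\alpha_{0}-1$ is not compatible with the face that deletes $\alpha_{0}$. What does work is that $\bold Z$ is the increasing union of the sets $\{\,m\mid m\ge -N\,\}$. (I read $B(A)$ as containing the bottom term $\bold B_{\emptyset}(A)=A$, i.e. I prove exactness of $\cdots\to\bigoplus_{|\alpha|=0}\bold B_{\alpha}(A)\to A\to 0$; without that term $H^{0}$ is already nonzero for $A=\bold k$.)

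For $N\ge 0$, let $B_{\ge -N}(A)\subseteq B(A)$ be the subcomplex spanned by $\bold B_{\emptyset}(A)$ and by those $\bold B_{\alpha}(A)$ whose markings all satisfy $\alpha_{i}\ge -N$. Since a face map $d_{\beta,\alpha}$ only deletes markings it cannot decrease $\min(\alpha)$, so $B_{\ge -N}(A)$ really is a subcomplex, and $B(A)=\bigcup_{N}B_{\ge -N}(A)$ because every element of $B(A)$, being a finite sum, involves only finitely many $\alpha$'s. Next I define $h_{N}\colon B_{\ge -N}(A)\to B_{\ge -N-1}(A)$, of total degree $-1$, on the $|\alpha|=n$ piece by
$$
h_{N}\bigl(x_{0}\overset{\alpha_{0}}\otimes\cdots\overset{\alpha_{n}}\otimes x_{n+1}\bigr)=(-1)^{n+1}\,1\overset{-N-1}\otimes x_{0}\overset{\alpha_{0}}\otimes\cdots\overset{\alpha_{n}}\otimes x_{n+1},
$$
and $h_{N}(x)=1\overset{-N-1}\otimes x$ on $\bold B_{\emptyset}(A)=A$. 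Because $-N-1$ is strictly below every marking occurring in $B_{\ge -N}(A)$, the new index $(-N-1,\alpha_{0},\dots,\alpha_{n})$ is always strictly increasing and lies in $B_{\ge -N-1}(A)$, so there are no exceptional cases. The face deleting the new marking $-N-1$ multiplies $1\cdot x_{0}=x_{0}$ and recovers $x_{0}\overset{\alpha_{0}}\otimes\cdots$; every other face deletes some $\alpha_{j}$, and --- again with no exception, since deleting markings keeps them all $\ge -N$ --- yields exactly $h_{N}$ of the corresponding face of the original term. Keeping track of the sign $(-1)^{n-i}$ in $d_{\bold B}$ and of the sign conventions of the associate simple complex, one gets $d_{\bold B}h_{N}+h_{N}d_{\bold B}=\iota_{N}$, the inclusion $B_{\ge -N}(A)\hookrightarrow B_{\ge -N-1}(A)$. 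The inner differential $\delta$ is a passenger: $h_{N}$ inserts the $\delta$--closed element $1$ and commutes with $\delta$ up to the total--complex sign, so the identity $D h_{N}+h_{N}D=\iota_{N}$ holds for the total differential $D$ of $B(A)$.

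Granting this, the Proposition is immediate: a cocycle $z\in B(A)$ lies in some $B_{\ge -N}(A)$ and is a cocycle there, so $z=\iota_{N}(z)=D(h_{N}z)+h_{N}(Dz)=D(h_{N}z)$ is a coboundary in $B_{\ge -N-1}(A)\subseteq B(A)$. The one idea that matters is to null--homotope the inclusions $\iota_{N}$ rather than to try to contract $B(A)$ itself --- the latter genuinely fails on the terms whose smallest marking is minimal. After that, the only real work is the sign bookkeeping in $d_{\bold B}h_{N}+h_{N}d_{\bold B}=\iota_{N}$ and the compatibility with $\delta$, which is routine.
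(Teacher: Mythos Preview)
Your approach is essentially identical to the paper's: filter by a lower bound on the indices (the paper uses $\bold B_{>N}(A)$, you use $B_{\ge -N}(A)$), null--homotope each inclusion by prepending a new smallest label with the unit $1$ in the new $A$--slot, and conclude by passing to the colimit over $N$. The paper carries this out at the level of the outer differential $d_{\bold B}$ and then invokes the passage to the associate simple complex, while you work directly with the total differential; these amount to the same computation.

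Your parenthetical about $\bold B_{\emptyset}(A)=A$ is well taken. As the paper literally defines $B(A)$ (stopping at $\oplus_{|\alpha|=0}\bold B_{\alpha}(A)\to 0$), the formula $\theta d+d\theta=i$ fails on length--$0$ terms: for $x=x_{0}\overset{\alpha_{0}}\otimes x_{1}$ one gets an extra summand $1\overset{N}\otimes x_{0}x_{1}$, and indeed $H^{0}(B(\bold k))=\bold k\ne 0$. Your fix --- augmenting by $\bold B_{\emptyset}(A)=A$ and letting $h_{N}$ act there as $x\mapsto 1\overset{-N-1}\otimes x$ --- is exactly what is needed for the homotopy identity to close up at the bottom, and is the correct reading of the statement. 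The remaining point you flag as ``routine'' (compatibility with the inner differential $\delta$) is genuinely routine: since $1$ is $\delta$--closed of degree $0$, one has $\delta h_{N}=h_{N}\delta$, and after inserting the Koszul sign that converts $h_{N}$ into a degree $-1$ map for the total complex this becomes the required anticommutation.
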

\begin{proof}
Let $\bold B_{> N}(A)$ be the sub double complex of $\bold B(A)$
defined by
$$
\to \oplus_{N< \alpha_0<\alpha_1<\alpha_2}
\bold B_{\alpha_0,\alpha_1,\alpha_2}(A)
\to \oplus_{N< \alpha_0<\alpha_1}\bold B_{\alpha_0,\alpha_1}(A)
\to \oplus_{N< \alpha_0}\bold B_{\alpha_0}(A)
\to 0
$$
Then we have the natural inclusion $i:\bold B_{>N}(A)\to \bold B_{>N-1}(A)$.
We define a map $\theta:\bold B_{>N}(A)\to \bold B_{>N-1}(A)[-1]$ 
$$
\theta:\oplus_{N< \alpha_0<\cdots<\alpha_i}
\bold B_{\alpha_0,\cdots,\alpha_i}(A)
\to \oplus_{N-1< \beta<\alpha_0<\cdots<\alpha_i}
\bold B_{\beta,\alpha_0,\cdots,\alpha_i}(A)
$$
by 
\begin{align*}
& \theta(1\overset{\alpha_0}\otimes x_1\overset{\alpha_1}\otimes 
\cdots\overset{\alpha_{n-1}}\otimes x_n
\overset{\alpha_n}\otimes 1) 
= 
1\overset{N}\otimes 1\overset{\alpha_0}\otimes x_1\overset{\alpha_1}\otimes 
\cdots\overset{\alpha_{n-1}}\otimes x_n
\overset{\alpha_n}\otimes 1.
\end{align*}
Then we have $\theta\circ d + d\circ\theta=i(x)$. Therefore
the double complex (for the differential $d$)
$\bold B(A)=\underset{\underset{N}\to}\lim \bold B_{>N}(A)$ is 
an acyclic complex. Therefore its associate simple complex is also
acyclic.
\end{proof}
\begin{remark}
Let $X$ be a $C^{\infty}$ manifold. Let $A=A^{\bullet}(X)$
be the DGA of $C^{\infty}$ forms.
Since $X_{\alpha}$ is isomorphic
to the product of copies of $X$, the natural
homomorphism $\bold B_{\alpha}(A)\to A^{\bullet}(X_{\alpha})$
of complexes are quasi-isomorphism by K\"uneth formula.
The homomorphisms $\bold B_{\alpha}(A) \to \bold B_{\beta}(A)$
and $A^{\bullet}(X_{\alpha})\to A^{\bullet}(X_{\beta})$
are compatible with the above quasi-isomorphisms.
As a consequence, the natural homomorphism
$B(A) \to A^{\bullet}(X_{\bullet})$ is a quasi-isomorphism.
Proposition \ref{acyclicity for topological setting}
 follows from 
Proposition \ref{acyclicity of free bar complex}.
\end{remark}
We define left $A$ right $A$ action ($A$-$A$ action for short) 
on the complex $\bold B_{\alpha}(A)$ by
$$
\begin{matrix}
A\otimes \bold B_{\alpha}(A) \otimes A & \to & \bold B_{\alpha} \\
y\otimes (x_0 \overset{\alpha_0}\otimes x_1 \cdots 
x_n  
\overset{\alpha_{n}}\otimes x_{n+1})\otimes z & \to &
(yx_0) \overset{\alpha_0}\otimes x_1 \cdots 
 x_n 
\overset{\alpha_{n}}\otimes (x_{n+1} z).
\end{matrix}
$$ 
Since the differentials of $\bold B(A)$ are $A$-$A$ homomorphisms,
$\bold B(A)$ is an $A$-$A$ module.

We introduce a bar filtration $F_b^{\bullet}$ on $\bold B(A)$ as follows:
$$
F^{-i}_b\bold B(A):
\cdots \to 0 \to \oplus_{\mid \alpha\mid =i}\bold B_{\alpha}(A)
\to \cdots
\to \oplus_{\mid \alpha\mid =0}\bold B_{\alpha}(A) \to 0 
$$
Then we have the following spectral sequence
$$
E_1^{-i,p}=\oplus_{\mid \alpha \mid =i}
\bold B_{\alpha}(H^{\bullet}(A))^{p}\Rightarrow
E_{\infty}^{-i+p}=H^{-i+p}(B(A)).
$$
This spectral sequence is called the bar spectral sequence.

\subsection{Augmentations and coproduct}

Let $\epsilon_1,\epsilon_2$ be two augmentations of $A=A^{\bullet}$,
i.e. DGA homomorphisms $\epsilon_i:A\to \bold k$, where
$\bold k$ is the trivial DGA. We define 
$B(\epsilon_1\mid A \mid \epsilon_2)$ by the associate simple complex of
the double complex
$$
\bold B(\epsilon_1\mid A \mid \epsilon_2)
=\bold k\otimes_{A,\epsilon_1}\bold B(A)\otimes_{A,\epsilon_2}\bold k.
$$
Therefore the degree $-n$ part 
$\bold B(\epsilon_1 \mid A \mid \epsilon_2)_n$
of $\bold B(\epsilon_1 \mid A \mid \epsilon_2)$
is isomorphic to 
$\oplus_{\mid\alpha\mid=n}\bold B_{\alpha}(\epsilon_1\mid A\mid\epsilon_2)$
where
$$
\bold B_{\alpha}(\epsilon_1\mid A\mid\epsilon_2)
=\bold k\overset{\alpha_0}\otimes A \overset{\alpha_1}\otimes\cdots 
\overset{\alpha_{n-1}}\otimes A \overset{\alpha_n}\otimes \bold k
$$
and the differential (outer differential) is given by
\begin{align*}
d(1\overset{\alpha_0}\otimes x_1 \overset{\alpha_1}\otimes\cdots 
\overset{\alpha_{n-1}}\otimes x_n \overset{\alpha_n}\otimes 1) = &
\epsilon_1(x_1) \overset{\alpha_1}\otimes x_2 \overset{\alpha_2}\otimes\cdots 
\overset{\alpha_{n-1}}\otimes x_{n} \overset{\alpha_{n}}\otimes 1 \\
& + \sum_{i=1}^{n-1}(-1)^i
1\overset{\alpha_0}\otimes x_1 \cdots \overset{\alpha_{i-1}}\otimes 
x_ix_{i+1}
\overset{\alpha_{i+1}}\otimes\cdots x_n \overset{\alpha_n}\otimes 1 \\
& + (-1)^n
1 \overset{\alpha_0}\otimes x_1 \overset{\alpha_1}\otimes\cdots 
\overset{\alpha_{n-1}}\otimes x_{n-1} \overset{\alpha_{n-1}}\otimes 
\epsilon_2(x_n).
\end{align*}
As in the last subsection, we have the following bar spectral sequence
$$
E_1^{-i,p}=\oplus_{\mid \alpha \mid =i}
\bold B_{\alpha}(\epsilon_1 \mid H^{\bullet}(A)\mid \epsilon_2)^{p}\Rightarrow
E_{\infty}^{-i+p}=H^{-i+p}(B(\epsilon_1\mid A\mid \epsilon_2)).
$$

We introduce a coproduct structure on $B(A)$.
Let $\epsilon_1,\epsilon_2,\epsilon_3$ be augmentations on $A$.
Let $\alpha=(\alpha_0<\cdots <\alpha_n)$ be a sequence of integers.
We define $\Delta_{\epsilon_2}$
\begin{equation}
\label{coproduct}
\Delta_{\epsilon_2}:\bold B(A)\to 
(\bold B(A)\otimes_{\epsilon_2} \bold k)
\otimes
(\bold k\otimes_{\epsilon_2} \bold B(A))
\end{equation}
by
$$
\Delta_{\epsilon_2}(x_0 \overset{\alpha_0}\otimes x_1\overset{\alpha_1}
\otimes \cdots 
\overset{\alpha_{n-1}}\otimes x_n
\overset{\alpha_{n}}\otimes x_{n+1})
=\sum_{i=0}^n
(x_0 \overset{\alpha_0}\otimes \cdots\overset{\alpha_{i-1}}
\otimes x_i\overset{\alpha_{i}}\otimes 1)\otimes
(1\overset{\alpha_{i}}\otimes  x_{i+1}
\overset{\alpha_{i+1}}\otimes\cdots 
\overset{\alpha_{n}}\otimes x_{n+1})
$$
for an element in $\bold B_{\alpha}(A)$.
This is a morphism in the category $KKVect_{\bold k}$.
We introduce an $A-A$ module structure on the right hand side of
(\ref{coproduct}) by
\begin{align*}
&
y_1\cdot(x_0 \overset{\alpha_0}\otimes \cdots\overset{\alpha_{i-1}}
\otimes x_i\overset{\alpha_{i}}\otimes 1)\otimes
(1\overset{\alpha_{i}}\otimes  x_{i+1}
\overset{\alpha_{i+1}}\otimes\cdots 
\overset{\alpha_{n}}\otimes x_{n+1})\cdot y_2 \\
=& ((y_1x_0) \overset{\alpha_0}\otimes \cdots\overset{\alpha_{i-1}}
\otimes x_i\overset{\alpha_{i}}\otimes 1)\otimes
(1\overset{\alpha_{i}}\otimes  x_{i+1}
\overset{\alpha_{i+1}}\otimes\cdots 
\overset{\alpha_{n}}\otimes (x_{n+1} y_2))
\end{align*}
for $y_1,y_2 \in A$. We can prove the following by
direct computation.
\begin{proposition}
\label{properties of coproduct}
\begin{enumerate}
\item
\label{associativity for coalgebra}
The homomorphism $\Delta_{\epsilon_2}$ is a homomorphism of
complex of $KVect_{\bold k}$
and compatible with the $A$-$A$ action.
Therefore we have the following homomorphism in $KKVect_{\bold k}$:
$$
\Delta_{\epsilon_1,\epsilon_2,\epsilon_3}:
\bold B(\epsilon_1\mid A\mid \epsilon_3)\to 
\bold B(\epsilon_1 \mid A\mid \epsilon_2)
\otimes
\bold B(\epsilon_2 \mid A\mid \epsilon_3).
$$
Using the isomorphism (\ref{tensor of double complex to simple complex}),
we have the following morphism in
$KVect_{\bold k}$:
$$
\Delta_{\epsilon_1,\epsilon_2,\epsilon_3}:
B(\epsilon_1\mid A\mid \epsilon_3)\to 
B(\epsilon_1 \mid A\mid \epsilon_2)
\otimes
B(\epsilon_2 \mid A\mid \epsilon_3).
$$
\item
\label{counit of coagebra}
Let $\epsilon$ be an augmentation of $A$.
For $\alpha=(\alpha_0)$, let 
$\epsilon_{\alpha_0}:\bold B_{\alpha_0}=
\bold k\overset{\alpha_0}\otimes\bold k \to \bold k$ be the natural map.
Then $u=\sum_{\alpha_0}\epsilon_{\alpha_0}$ 
defines a homomorphism of 
differential graded coalgebras
$u:B(\epsilon \mid A\mid \epsilon) \to \bold k$.
\item
The following composite maps are identity:
\begin{align}
\label{counit properties}
B(\epsilon_1\mid A\mid \epsilon_2)&\overset{
\Delta_{\epsilon_1,\epsilon_1,\epsilon_2}}
\to 
B(\epsilon_1 \mid A\mid \epsilon_1)
\otimes
B(\epsilon_1 \mid A\mid \epsilon_2) \\
\nonumber
&\overset{u\otimes 1}\to B(\epsilon_1 \mid A\mid \epsilon_2) \\
\nonumber
B(\epsilon_1\mid A\mid \epsilon_2)&\overset{
\Delta_{\epsilon_1,\epsilon_2,\epsilon_2}}
\to 
B(\epsilon_1 \mid A\mid \epsilon_2)
\otimes
B(\epsilon_2 \mid A\mid \epsilon_2) \\
\nonumber
&\overset{1\otimes u}\to B(\epsilon_1 \mid A\mid \epsilon_2) 
\end{align}
where $\Delta_{\epsilon_1,\epsilon_2,\epsilon_3}$
is defined in (\ref{associativity for coalgebra}).
\end{enumerate}
\end{proposition}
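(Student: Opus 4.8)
The plan is to verify all three assertions by unwinding the explicit formula for $\Delta_{\epsilon_2}$ on a basis element of $\bold B_\alpha(A)$ and checking the required identities term by term, then passing to associate simple complexes via the isomorphism (\ref{tensor of double complex to simple complex}). For part (\ref{associativity for coalgebra}), I would first check that $\Delta_{\epsilon_2}$ commutes with the inner differential $\delta$: since $\delta$ acts by the Leibniz rule on each tensor factor and $\Delta_{\epsilon_2}$ merely splits the word $x_0\otimes\cdots\otimes x_{n+1}$ at each interior node $\alpha_i$ while inserting $1$'s, compatibility with $\delta$ is immediate because $\delta(1)=0$ and the splitting does not rearrange factors. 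Compatibility with the outer differential $d_{\bold B}$ is the substantive point: applying $d_{\bold B}$ first and then splitting, versus splitting first and then applying $d_{\bold B}$ to each half, one must match the terms where a multiplication $x_jx_{j+1}$ straddles the splitting node (these become an $\epsilon_2$-augmentation term on one side paired with a unit on the other, by the construction of the $\otimes_{\epsilon_2}$ quotient) against the terms where it does not. The signs $(-1)^{n-i}$ in $d_{\beta,\alpha}$ and the Koszul signs from (\ref{tensor of double complex to simple complex}) must be tracked, but this is the standard coassociativity-of-bar-coproduct bookkeeping; I expect it to go through cleanly once the node indices are fixed. Compatibility with the $A$-$A$ action is visible from the displayed formula, since $y_1$ only touches $x_0$ and $y_2$ only touches $x_{n+1}$, which lie in the left and right halves respectively. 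Tensoring down over $\epsilon_1$ on the left and $\epsilon_3$ on the right then produces $\Delta_{\epsilon_1,\epsilon_2,\epsilon_3}$, and applying $\nu$ from (\ref{tensor of double complex to simple complex}) gives the map on simple complexes.

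For part (\ref{counit of coagebra}), I would observe that $B(\epsilon\mid A\mid\epsilon)$ in bar-degree $0$ is $\bigoplus_{\alpha_0}\bold k\overset{\alpha_0}\otimes\bold k$, and that $u$ kills everything in negative bar-degree. Checking that $u$ is a coalgebra map amounts to verifying $(u\otimes u)\circ\Delta_{\epsilon_1,\epsilon,\epsilon_2}$-type compatibilities, but more directly: on a word with $n\geq 1$ interior factors, every summand of $\Delta_{\epsilon}$ has at least one half of bar-degree $\geq 1$ unless... actually every summand has the two halves of bar-degrees $i$ and $n-i$, so $u$ on a tensor of halves is nonzero only when both halves sit in bar-degree $0$, forcing $n=0$; this gives the counit property. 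Part (3) is then a direct corollary: take $\epsilon_3=\epsilon_2$ (resp. $\epsilon_1=\epsilon_2$) in $\Delta_{\epsilon_1,\epsilon_2,\epsilon_3}$ and apply $u$ to the appropriate factor. In the first composite, $(u\otimes 1)\circ\Delta_{\epsilon_1,\epsilon_1,\epsilon_2}$ applied to a word picks out the single summand where the left half has bar-degree $0$, i.e. the $i=0$ term $(1\overset{\alpha_0}\otimes 1)\otimes(1\overset{\alpha_0}\otimes x_1\otimes\cdots\otimes x_{n+1})$, and $u$ sends the left factor to $1\in\bold k$, returning the original element; the second composite is symmetric, using the $i=n$ term.

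The main obstacle I anticipate is the sign analysis in the outer-differential compatibility of part (\ref{associativity for coalgebra}): one must reconcile three separate sign sources — the $(-1)^{n-i}$ in $d_{\beta,\alpha}$, the Koszul sign $(-1)^{ji'}$ in the isomorphism $\nu$ of (\ref{tensor of double complex to simple complex}), and the sign $(-1)^{i}1_A\otimes\delta_B$ in the tensor differential from the conventions section — and confirm they conspire so that the straddling-multiplication terms cancel between the two orders of composition. Everything else is formal manipulation of words with inserted units, and the $A$-$A$-equivariance is manifest from the formulas as written, so I would present those parts briefly and devote the bulk of the written proof to pinning down the node-index sign in the coproduct-differential square.
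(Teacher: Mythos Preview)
Your proposal is correct and matches the paper's approach: the paper itself offers no detailed argument, stating only that the proposition follows ``by direct computation,'' which is precisely the term-by-term verification you outline. Your identification of the outer-differential sign bookkeeping in part~(\ref{associativity for coalgebra}) as the only nontrivial point, and your bar-degree argument for parts~(\ref{counit of coagebra}) and~(3), are on target and in fact supply more detail than the paper does.
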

\begin{definition}
\begin{enumerate}
\item
The map $\Delta_{\epsilon_1,\epsilon_2,\epsilon_3}$ 
defined in (\ref{associativity for coalgebra}) of 
Proposition \ref{properties of coproduct} is
called the coproduct of $B(A)$. It is easy to see 
that the coproduct is coassociative. Thus $B(A)$
forms a DG coalgebroid over the set $Spaug(A)$ of augmentations of 
$A$.
\item
The map $u:B(\epsilon \mid A \mid \epsilon)\to \bold k$ 
defined in (\ref{counit of coagebra}) of 
Proposition \ref{properties of coproduct} is
is called 
the counit. In general, for an associative differential graded 
coalgebra $B$, the map $u$ satisfying the properties
as (\ref{counit properties}) is called a counit.
\end{enumerate}
\end{definition}

\section{Comparison to Chen's theory}

In this section, we compare the simplicial bar complex defined
as above and the reduced bar complex defined by Chen.
Let $A=A^{\bullet}$ be a differential graded algebra over a field 
$\bold k$ and $\epsilon:A \to \bold k$ be an augmentation.
Let $I=\Ker(\epsilon:A \to \bold k)$ be the augmentation ideal.
We define the double complex $\bold B_{red}(A,\epsilon)$ as
$$
\bold B_{red}(A,\epsilon): \cdots \to I^{\otimes 3} ]
\overset{d_{B,red}}\to  I^{\otimes 2}
\overset{d_{B,red}}\to I \overset{0}\to \bold k \to 0,
$$
where the outer differential 
$d_{B,red}:I^{\otimes i}\to I^{\otimes (i-1)}$ is defined by
\begin{equation}
\label{reduced outer}
d_{B,red}:[x_1\mid \cdots \mid x_i]\mapsto 
\sum_{p=1}^{i-1}(-1)^{p}
[x_1\mid \cdots \mid x_{p}x_{p+1}\mid\cdots 
\mid x_i].
\end{equation}
Here an element $x_1\otimes \cdots \otimes x_i$ in $I^{\otimes i}$
is denoted as $[x_1\mid \cdots \mid x_i]$.
The total complex $B_{red}(A,\epsilon)$ is called the
Chen's reduced bar complex.

We define a degree preserving linear map  
$B(\epsilon \mid A \mid \epsilon)\to B_{red}(A,\epsilon)$
by
\begin{equation}
\label{simp to chen}
1\overset{\alpha_0}\otimes x_1\overset{\alpha_1}\otimes \cdots 
\overset{\alpha_{n-1}}\otimes x_{n-1}\overset{\alpha_n}\otimes 1
\mapsto
[\pi(x_1)\mid \cdots 
\mid\pi(x_{n-1})]
\end{equation}
where $\pi(x)=x-\epsilon(x)\in I$ for an element $x \in A$.
\begin{lemma}
It is a homomorphism of complexes of $KVect_{\bold k}$.
\end{lemma}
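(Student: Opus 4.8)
The claim is that the degree-preserving linear map $\Phi:B(\epsilon\mid A\mid\epsilon)\to B_{red}(A,\epsilon)$ of (\ref{simp to chen}) commutes with the differentials. Each total complex carries an inner differential (coming from the DGA differential $\delta$ of $A$, suitably signed on the tensor factors) and an outer differential (the bar differential $d$ on the simplicial side, and $d_{B,red}$ on Chen's side). Since $\Phi$ acts only on the ``middle'' tensor slots $x_1,\dots,x_{n-1}$ via the projection $\pi(x)=x-\epsilon(x)$, and the inner differential acts diagonally on those same slots, the plan is first to check compatibility with the inner differentials — this reduces to the single fact that $\pi\colon A\to I$ commutes with $\delta$, which holds because $\epsilon$ is a DGA homomorphism into the trivial DGA, hence $\epsilon\circ\delta=0$, so $\delta\circ\pi=\delta-\epsilon(\cdot)\,\delta(1)=\delta=\pi\circ\delta$ on $I$ (and the signs on the tensor factors are identical on both sides by construction). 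This part is routine.

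The substantive point is compatibility with the outer differentials: one must verify $\Phi\circ d = d_{B,red}\circ\Phi$. The simplicial outer differential, as displayed just before Proposition~\ref{properties of coproduct} in the $\epsilon_1=\epsilon_2=\epsilon$ case, has three kinds of terms: the boundary term $\epsilon(x_1)\,[\,\cdots]$ with sign $+1$, the $n-2$ interior multiplication terms $(-1)^i[\cdots\mid x_ix_{i+1}\mid\cdots]$ for $1\le i\le n-1$, and the other boundary term $(-1)^n\,\epsilon(x_{n-1})\,[\cdots]$. On the other hand $d_{B,red}$ on $I^{\otimes(n-1)}$ is purely $\sum_{p=1}^{n-2}(-1)^p[\pi(x_1)\mid\cdots\mid\pi(x_p)\pi(x_{p+1})\mid\cdots]$, with no boundary terms at all. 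So I would argue in two steps. First: the two boundary terms of $d$ map to zero under $\Phi$, because in each the first (resp.\ last) remaining middle slot of the image is $\pi$ applied to something, but actually the cleaner observation is that after applying $\epsilon$ to $x_1$ (resp.\ $x_{n-1}$) the surviving tensor is exactly $[\pi(x_2)\mid\cdots\mid\pi(x_{n-1})]$ scaled by a scalar — these do not obviously vanish, so instead the right bookkeeping is: the composite $\epsilon(x_i)$-terms cancel against the pieces of the interior terms in which one factor gets killed by $\pi$. Concretely, expand $\pi(x_p)\pi(x_{p+1}) = x_px_{p+1} - \epsilon(x_p)x_{p+1} - \epsilon(x_{p+1})x_p + \epsilon(x_p)\epsilon(x_{p+1})$, then apply $\pi$ to this product inside the bar bracket; tracking how the cross terms $\epsilon(x_p)\pi(x_{p+1})$ for successive $p$ telescope, together with the two genuine boundary terms of $d$, is exactly the classical reduction of the two-sided bar complex to the reduced bar complex. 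I expect this telescoping sign-chase to be the main obstacle — not conceptually deep, but the place where one must be careful that the sign $(-1)^{n-i}$ in $d_{\beta,\alpha}$, the sign $(-1)^i$ in the displayed $d$, and the sign $(-1)^p$ in (\ref{reduced outer}) all match up after re-indexing (note the shift: the simplicial side has $n-1$ nontrivial middle factors sitting in slots indexed $1,\dots,n-1$, while $d_{B,red}$ indexes multiplications by $p=1,\dots,n-2$).

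So the concrete steps are: (1) fix $n$ and an element $1\overset{\alpha_0}\otimes x_1\overset{\alpha_1}\otimes\cdots\overset{\alpha_n}\otimes 1$; (2) compute $\Phi(d(\cdot))$ by applying the three-term formula for $d$ and then $\Phi$; (3) compute $d_{B,red}(\Phi(\cdot))$ by applying (\ref{reduced outer}) to $[\pi(x_1)\mid\cdots\mid\pi(x_{n-1})]$ and expanding each $\pi(\pi(x_p)\pi(x_{p+1}))$; (4) match terms: the ``true product'' pieces $[\cdots\mid\pi(x_px_{p+1})\mid\cdots]$ agree directly using $\pi(x_px_{p+1})=\pi(x_p x_{p+1})$ after noting $\epsilon(x_px_{p+1})=\epsilon(x_p)\epsilon(x_{p+1})$ so the correction terms organize correctly, and the scalar ``degenerate'' pieces telescope against each other and against the two $\epsilon(x_1)$- and $\epsilon(x_{n-1})$-boundary terms of $d$; (5) conclude $\Phi d = d_{B,red}\Phi$. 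Combining with the inner-differential compatibility from the first paragraph gives that $\Phi$ is a map of total complexes, i.e.\ a homomorphism in $KVect_{\bold k}$, as claimed.
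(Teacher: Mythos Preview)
The paper states this lemma without proof and proceeds directly to Theorem~\ref{comparison of simp and chen}. Your plan is the standard verification and is correct in outline: split into inner and outer differentials; the inner part reduces to $\pi\circ\delta=\delta\circ\pi$ (immediate from $\epsilon\circ\delta=0$); the outer part is the identity $\pi(x_ix_{i+1})=\pi(x_i)\pi(x_{i+1})+\epsilon(x_i)\pi(x_{i+1})+\epsilon(x_{i+1})\pi(x_i)$ followed by a telescoping of the cross terms against the two boundary terms $\epsilon(x_1)[\cdots]$ and $(-1)^n\epsilon(x_n)[\cdots]$.

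One small correction: you inherited an apparent typo from the paper's display~(\ref{simp to chen}). For $|\alpha|=n$ the component $\bold B_\alpha(\epsilon\mid A\mid\epsilon)=\bold k\overset{\alpha_0}\otimes A\overset{\alpha_1}\otimes\cdots\overset{\alpha_{n-1}}\otimes A\overset{\alpha_n}\otimes\bold k$ has $n$ middle factors $x_1,\dots,x_n$, not $n-1$, and maps to $[\pi(x_1)\mid\cdots\mid\pi(x_n)]\in I^{\otimes n}$ (both sit in outer degree $-n$). Your remark about a ``shift'' between $n-1$ middle factors and multiplications indexed $1,\dots,n-2$ is an artifact of this; once the indexing is fixed, the interior terms on both sides are indexed by $i=1,\dots,n-1$ and match directly, and the telescoping leaves exactly the two boundary contributions with the right signs. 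With that adjustment your argument goes through cleanly.
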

We have the following theorem.
\begin{theorem}
\label{comparison of simp and chen}
The homomorphism of associate simple complexes
$B(\epsilon \mid A \mid \epsilon)\to B_{red}(A,\epsilon)$
obtained form the map (\ref{simp to chen}) is a quasi-isomorphism.
\end{theorem}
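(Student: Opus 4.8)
The plan is to prove the quasi-isomorphism by comparing the two bar spectral sequences attached to the filtrations by number of tensor factors. Both $B(\epsilon\mid A\mid\epsilon)$ and $B_{red}(A,\epsilon)$ carry a decreasing filtration $F^{-i}$ given by truncating at $\bigoplus_{|\alpha|\le i}\bold B_\alpha$ (resp. $\bigoplus_{j\le i}I^{\otimes j}$), and the map (\ref{simp to chen}) is manifestly filtered: an element of $\bold B_\alpha$ with $|\alpha|=n$ maps to $I^{\otimes(n-1)}$. So the first step is to observe that we get an induced morphism of filtered complexes, hence a morphism of spectral sequences, and it suffices to check that it is an isomorphism on $E_1$ (or already on $E_0$, up to a degree shift), since both filtrations are exhaustive and bounded below in each total degree.

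The second step is the $E_0$-level computation. On the associated graded of $B(\epsilon\mid A\mid\epsilon)$, the outer differential drops out and one is left, in filtration degree $-n$, with the complex $\bigoplus_{|\alpha|=n}\bold k\overset{\alpha_0}\otimes A\overset{\alpha_1}\otimes\cdots\overset{\alpha_n}\otimes\bold k$ equipped only with the inner differential $\delta$. The index $\alpha=(\alpha_0<\cdots<\alpha_n)$ ranges over all strictly increasing sequences, so this direct sum is, as a complex with the inner differential, a (huge) direct sum of copies of $A^{\otimes n}$ indexed by a contractible combinatorial poset — more precisely, for fixed total degree $n$ the $\alpha$'s are the $n$-simplices of the nerve of $(\bold Z,\le)$, whose geometric realization is contractible. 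On the reduced side the associated graded in filtration degree $-n$ is simply $I^{\otimes(n-1)}$ with its inner differential. So the map on $E_0$ sends the $\alpha$-component $\bold k\otimes A^{\otimes(n-1)}\otimes\bold k$ to $I^{\otimes(n-1)}$ by $x_1\otimes\cdots\otimes x_{n-1}\mapsto\pi(x_1)\otimes\cdots\otimes\pi(x_{n-1})$, collapsing the index $\alpha$. This is where the real content sits, and it is essentially the assertion of Proposition \ref{acyclicity of free bar complex}: the kernel of the projection $A^{\otimes(n-1)}\to I^{\otimes(n-1)}$ is spanned by tensors with at least one factor in $\bold k\cdot 1$, and together with the simplicial index $\alpha$ it forms an acyclic piece. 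Concretely I would realize the acyclic complement as the associated graded of the free (non-augmented) simplicial bar complex $B(A)$ with one augmentation applied, whose acyclicity is exactly the contracting homotopy $\theta$ of that proof ("inserting a $1$ at a new index $N$"). So the homotopy $\theta$ from the proof of Proposition \ref{acyclicity of free bar complex}, restricted to the kernel of (\ref{simp to chen}) and compatible with the inner differential, shows that the kernel is acyclic for $\delta$; since (\ref{simp to chen}) is surjective on each graded piece, it is a quasi-isomorphism on $E_0$.

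The third step is bookkeeping: conclude that the map is an isomorphism on $E_1$, hence an isomorphism on $E_\infty$, and then invoke the comparison theorem for filtered complexes whose filtrations are exhaustive and (in each total degree) bounded, to deduce that the original map on total complexes is a quasi-isomorphism. One should double-check the two boundedness conditions: in a fixed total degree $p-i$, only finitely many filtration steps contribute on the reduced side because $I^{\otimes j}$ sits in internal degrees $\ge j$ when $A$ is connected — but the theorem is stated for general $A$, so instead the correct statement is that the filtration is exhaustive and the spectral sequence is regular (each $E_1^{-i,p}$ eventually stabilizes), which holds because for fixed $-i+p$ only $E_r^{-i,p}$ with $i$ bounded by $p-(-i+p)$... here one genuinely must be a little careful, and this convergence point, rather than the $E_0$ computation, may actually be the subtler issue. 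The cleanest route is probably to note that the map (\ref{simp to chen}) fits into a short exact sequence of complexes $0\to K\to B(\epsilon\mid A\mid\epsilon)\to B_{red}(A,\epsilon)\to 0$, show directly that $K$ (the kernel complex, with full differential including the outer one) is acyclic by exhibiting an explicit contracting homotopy built from $\theta$, and thereby bypass spectral-sequence convergence entirely. I would present the argument this second way if the homotopy $\theta$ can be checked to preserve $K$ and to contract it — that verification, a sign-chase essentially identical to the proof of Proposition \ref{acyclicity of free bar complex}, is the main obstacle and the only place where a genuine computation is needed.
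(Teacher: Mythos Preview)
Your filtered-complex setup is correct, and the paper does exactly this: compare the two bar filtrations and pass to the spectral sequences. But your $E_0$ argument has a genuine gap. On $Gr_F^{-n}$ of the simplicial side you correctly get $\bigoplus_{|\alpha|=n}A^{\otimes n}$ with \emph{only} the inner differential $\delta$; since $\delta$ does not mix different $\alpha$-components, the cohomology is the infinite direct sum $\bigoplus_{|\alpha|=n}H^\bullet(A)^{\otimes n}$, which cannot be isomorphic to the single copy $H^\bullet(I)^{\otimes n}$ on the reduced side. The ``contractibility of the index poset'' is irrelevant here because the differential that would realize that contractibility is precisely the outer one you have killed. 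Likewise the homotopy $\theta$ of Proposition~\ref{acyclicity of free bar complex} is a contracting homotopy for the \emph{outer} differential $d_{\bold B}$, not for $\delta$; invoking it at the $E_0$ level does nothing. So the map is not a quasi-isomorphism on $E_0$ (nor on $E_1$), and you must go further in the spectral sequence.

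The paper's argument works at the level $E_1\Rightarrow E_2$. The $E_1$-terms are exactly the cohomologies you would have found: $\bigoplus_{|\alpha|=n}H^\bullet(A)^{\otimes n}$ versus $H^\bullet(I)^{\otimes n}$, now equipped with the $d_1$ coming from the outer bar differential. The paper splits $H^\bullet(A)=H^\bullet(I)\oplus\bold k$, introduces a second filtration $G^{-l}$ on the $E_1$-complex counting the number of $H^\bullet(I)$-factors, and reduces the comparison of $Gr_G$ to a purely combinatorial statement about auxiliary complexes $K_{m,l}$ built from the algebra $\bold k[x]/(x^2)$ (Proposition~\ref{quasi-iso for typical basic}), proved by induction on $l$. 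This replaces your attempted $E_0$ argument with a genuine $E_2$-isomorphism, after which the standard convergence argument applies. Your alternative proposal --- contracting the kernel $K$ of (\ref{simp to chen}) directly with the full differential --- might work, but $\theta$ does not obviously preserve $K$ (it inserts a $1$, which interacts nontrivially with the projection $\pi$), and you have not carried out the verification; the paper's route via $G$ and $K_{m,l}$ is what actually closes the argument.
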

\begin{proof}
First, we define the bar filtration $F^{-i}_b\bold B_{red}(A,\epsilon)$
on the double complex
$\bold B_{red}(A, \epsilon)$ by
$$
F^{-i}_b\bold B_{red}(A,\epsilon):
\cdots \to 0 \to I^{\otimes i}
\to \cdots
\to I \to \bold k \to 0. 
$$

Then the bar filtration on $B(\epsilon\mid A\mid\epsilon)$
and that on $B_{red}(A,\epsilon)$ are compatible and we have
the following homomorphisms of spectral sequences:
$$
\begin{matrix}
E_1^{-i,p}=\oplus_{\mid \alpha\mid =i}
\bold B_{\alpha}(\epsilon \mid H^{\bullet}(A)\mid \epsilon)^p & \Rightarrow & 
E_{\infty}^{-i+p}=H^{-i+p}(B(\epsilon\mid A \mid\epsilon)) \\
\downarrow & & \downarrow
\\
'E_1^{-i,p}=(H^{\bullet}(I)^{\otimes i})^p & \Rightarrow & 
'E_{\infty}^{-i+p}=H^{-i+p}(B_{red}(A, \epsilon))
\end{matrix}
$$
We prove that the vertical arrow coincides from $E_2$-terms.
$E_1$-terms are the following complexes of graded vector spaces:
$$
\begin{matrix}
\cdots
&\to&
\underset{\alpha_0<\alpha_1<\alpha_2}\oplus
B_{\alpha_0,\alpha_1,\alpha_2}
&\to&
\underset{\alpha_0<\alpha_1}\oplus
B_{\alpha_0,\alpha_1}
&\to&
\oplus_{\alpha_0}
B_{\alpha_0}
&\to& 0 \\
& &\downarrow & & \downarrow & &\downarrow & &\\
\cdots
&\to&
H^{\bullet}(I)^{\otimes 2}
&\to&
H^{\bullet}(I)^{\otimes 1}
&\to&
\bold k
&\to& 0 \\
\end{matrix}
$$
Here we used the abbreviation
$$
B_{\alpha_0,\cdots,\alpha_n}=
\bold k\overset{\alpha_0}\otimes H^{\bullet}(A)
\otimes \cdots
\otimes 
H^{\bullet}(A)
\overset{\alpha_n}\otimes \bold k.
$$
We show that the vertical homomorphism of complexes are quasi-isomorphism.
We have
\begin{align*}
B_{\alpha_0,\cdots,\alpha_n}& =
\bold k\overset{\alpha_0}\otimes (H^{\bullet}(I)\oplus \bold k)
\otimes \cdots
\otimes 
(H^{\bullet}(I)\oplus \bold k)
\overset{\alpha_n}\otimes \bold k \\
& =
\oplus_{k=0}^{n}\oplus_{c\in C(n,k)}
H^{\bullet}(I)^{\otimes k} ,
\end{align*}
where $C(n,k)$ is the subset of 
$\{1,\dots,n\}$ of cardinality $k$.
Since the images of 
$H^{\bullet}(I)\otimes H^{\bullet}(A)$ and
$H^{\bullet}(A)\otimes H^{\bullet }(I)$ under the multiplication
is contained in
$H^{\bullet}(I)$, the following filtration $G$ on
$(E_1^{-i,\bullet},d_1)_i$ and
$('E_1^{-i,\bullet},d_1)_i$ defines a subcomplexes:
\begin{align*}
G^{-l}E_1^{-i,\bullet}&=
\oplus_{k=0}^{l}\oplus_{c\in C(n,k)}
H^{\bullet}(I)^{\otimes k}, \\
G^{-l}\ 'E_1^{-i,\bullet}
&=\begin{cases}
H^{\bullet}(I)^{\otimes i}  &\quad (i \leq l)  \\ 
0  &\quad (i > l)  
\end{cases}
\end{align*}
Thus the associate graded complexes of $E_1^{-i,\bullet}\to
'E_1^{-i,\bullet}$ are
\begin{equation}
\label{graded comparison}
\begin{matrix}
\cdots
&\to&
\underset{\substack{\mid \alpha \mid =l+1,\\ c\in C(l+1,l)}}\oplus
H^{\bullet}(I)^{\otimes l}
&\to&
\underset{\substack{\mid \alpha \mid =l, \\ c\in C(l,l)}}\oplus
H^{\bullet}(I)^{\otimes l}
&\to&
0
&\to& \cdots \\
& &\downarrow & & \downarrow & &\downarrow & &\\
\cdots
&\to&
0
&\to&
H^{\bullet}(I)^{\otimes l}
&\to&
0
&\to& \cdots \\
\end{matrix}
\end{equation}
We consider an algebra $\bold k \oplus \bold k x$ such that $x^2=0$.
We define the complex $K_{m,l}$ by
\begin{equation}
\label{comparison typical basic}
\cdots
\to
\underset{\substack{\mid \alpha \mid =l+2,\\ 
\alpha\subset [1,m] \\
c\in C(l+2,l)}}\oplus
\bold k t_{\alpha,c}
\to
\underset{\substack{\mid \alpha \mid =l+1,\\ 
\alpha\subset [1,m] \\
c\in C(l+1,l)}}\oplus
\bold kt_{\alpha,c}
\to
\underset{\substack{\mid \alpha \mid =l,\\ 
\alpha\subset [1,m] \\
c\in C(l,l)}}\oplus
\bold kt_{\alpha,c}
\to
0
\end{equation}
where 
\begin{align*}
&t_{\alpha,c}=1\overset{\alpha_0}\otimes u_1\overset{\alpha_1}
\otimes\cdots\overset{\alpha_{k-1}}\otimes u_k\overset{\alpha_k}\otimes 1, \\
&\text{ with }u_i=\begin{cases}x &\text{ if }i\in c, \\
1 &\text{ if }i\notin c.
\end{cases}
\end{align*}
The differential is similar to that of simplicial bar complex.
Then there is a natural inclusion $K_{m,l}\to K_{m+1,l}$
and a map $\epsilon_m:K_{m,l}\to \bold k$.
\begin{proposition}
\label{quasi-iso for typical basic}
\begin{enumerate}
\item
The map $\epsilon_l:K_{l,l}\to \bold k$ is an isomorphism.
\item
The natural inclusion $K_{m,l}\to K_{m+1,l}$ is a quasi-isomorphism
for $m\geq l$.
\item
The complex $K_{m,l}\to \bold k$ is an acyclic complex for $l\leq m$.
\end{enumerate}
\end{proposition}
\begin{proof}
We prove the proposition by the induction on $l$.
The statement (1) is direct from the definitions.
The statement (3) follows from the statement (1) and (2).
We prove the statement (2) for $l$ assuming the statement (3) for $l-1$.
The cokernel of the complex $\Coker(K_{m,l}\to K_{m+1,l})$ is isomorphic
to
$$
\cdots
\to
\underset{\substack{\mid \alpha \mid =k+1,\\ 
\alpha\subset [1,m+1],\\
\alpha_{k+1}=m+1}}\oplus \\
\left(
\begin{matrix}
\underset{c\in C(k,l)}\oplus
\bold k t'_{\alpha,c}\\
\bigoplus \\
\underset{c\in C(k,l-1)}\oplus
\bold k t''_{\alpha,c}\\
\end{matrix}
\right)
\overset{d}\to
\underset{\substack{\mid \alpha \mid =k,\\ 
\alpha\subset [1,m+1],\\
\alpha_{k}=m+1}}\oplus \\
\left(
\begin{matrix}
\underset{c\in C(k-1,l)}\oplus
\bold k t'_{\alpha,c}\\
\bigoplus \\
\underset{c\in C(k-1,l-1)}\oplus
\bold k t''_{\alpha,c}\\
\end{matrix}
\right)
\to\cdots
$$
where
\begin{align*}
&t'_{\alpha,c}=1\overset{\alpha_0}\otimes u_1\overset{\alpha_1}
\otimes\cdots\overset{\alpha_{k-1}}\otimes u_k
\overset{\alpha_k}\otimes 1
\overset{m+1}\otimes 1, \\
&t''_{\alpha,c}=1\overset{\alpha_0}\otimes u_1\overset{\alpha_1}
\otimes\cdots\overset{\alpha_{k-1}}\otimes u_k
\overset{\alpha_k}\otimes x
\overset{m+1}\otimes 1, \\
&\text{ with }u_i=\begin{cases}x &\text{ if }i\in c, \\
1 &\text{ if }i\notin c
\end{cases}
\end{align*}
Thus it is isomorphic to the cone of a complex homomorphism
$K_{m,l}\to K_{m,l-1}$ by considering the both cases for $u_k=1,x$.
Since $K_{m,l}\to \bold k$ and $K_{m,l-1}\to \bold k$ is
quasi-isomorphism by the induction hypothesis for $(m,l)$
and $(m,l-1)$, we have the statement (2).
\end{proof}
Proof of Theorem \ref{comparison of simp and chen}. 
Since the diagram (\ref{graded comparison})
 is obtained from (\ref{comparison typical basic})
 by tensoring 
$H^{\bullet}(I)^{\otimes l}$ and
taking the inductive limit on $m$,
the homomorphism of associate graded complex of
$E_1^{-i,\bullet}\to
'E_1^{-i,\bullet}$ is a
quasi-isomorphism by Proposition \ref{quasi-iso for typical basic}.
This proves the theorem.
\end{proof}
\begin{remark}
Since the spectral sequence associated to the filtration $G$ is
isomorphic from $E_2$ terms, the induced filtration on
$H^{i}(B(\epsilon \mid A \mid \epsilon))$ and
that on 
$H^{i}(B_{red}(A,\epsilon))$ are equal.
\end{remark}

\section{Simplicial bar complex and DG category $K\Cal C_A$}

\label{main section}

\subsection{Definition of comodules over DG coalgebras}

In this section, we prove  that the DG category of
comodules over the simplicial bar complex 
$B(\epsilon \mid A\mid\epsilon)$
is homotopy equivalent to the DG category $K\Cal C_A$.

Let $B$ be a coassociative differential graded coalgebra over 
$\bold k$ with a counit 
$u:B\to \bold k$. The comultiplication $B\to B\otimes B$ is written as
$\Delta_B$.
\begin{definition}[DG categroy of $B$ comodules]
\label{def of B comod as DG cat}
\begin{enumerate}
\item
A $\bold k$-complex $M$ with a homomorphism $\Delta_M$ of complexes
$$
\Delta_M:M \to B\otimes M
$$
is called a (left) $B$-comodule if the following properties hold.
\begin{enumerate}
\item {\sc Coassociativity}
The following diagram commutes:
$$
\begin{matrix}
M & \overset{\Delta_M}\to & B\otimes M \\
\Delta_M\downarrow & & \downarrow 1\otimes \Delta_M \\
B\otimes M &\overset{\Delta_B\otimes 1}\to & B\otimes B\otimes M.
\end{matrix}
$$
\item {\sc Counitarity}
The composite homomorphism
$
M \overset{\Delta_M}\to B\otimes M \overset{u\otimes 1}\to M
$
is the identity of $M$.
\end{enumerate}
\item
Let $M,N$ be $B$-comodules. 
We define the complex of homomorphisms
$\underline{Hom}^{\bullet}_{B-com}(M,N)$ by the associate simple complex
of $\underline{{\bf Hom}}_{B-com}^{\bullet}(M,N)$ defined by
\begin{align*}
\underline{{\bf Hom}}^{\bullet}_{B-com}(M):
& \underline{Hom}_{KVec_{\bold k}}(M,N)
\overset{d_H}\to
\underline{Hom}_{KVec_{\bold k}}(M,B\otimes N) 
\\
&\overset{d_H}\to 
\underline{Hom}_{KVec_{\bold k}}(M,B\otimes B\otimes N)
\overset{d_H}\to\dots 
\end{align*}
where 
$$
\underline{Hom}_{KVec_{\bold k}}(M,B^{\otimes n}\otimes N) \\
\overset{d_H}\to 
\underline{Hom}_{KVec_{\bold k}}(M,B^{\otimes (n+1)}\otimes N)
$$
is defined by 
\begin{align}
d_H\varphi=&(-1)^{n+1}(1_B\otimes \varphi)\circ\Delta_M 
\label{outer hochschild}
\\
\nonumber
&+\sum_{i=1}^n(-1)^{n-i+1}
(1^{\otimes (i-1)}_B\otimes \Delta_B\otimes 1^{\otimes (n-i)}_B
\otimes 1_N)\circ
\varphi \\
\nonumber
&+(1^{\otimes n}_B\otimes \Delta_N)\circ \varphi.
\end{align}
\item
Let $L,M, N$ be $B$-comodules.
We define the composite morphism
$$
\mu:\underline{Hom}_{B-com}(M,N)\otimes
\underline{Hom}_{B-com}(L,M) \to
\underline{Hom}_{B-com}(L,N)
$$
by $\sum_{i,j\geq 0}\mu_{ij}$, where the morphism
\begin{align*}
\mu_{ij}:&\underline{Hom}_{KVec_{\bold k}}(M,B^{\otimes i}\otimes
     N)e^{-i}\otimes
\underline{Hom}_{KVec_{\bold k}}(L,B^{\otimes j}\otimes M)e^{-j} \\
& \to
\underline{Hom}_{KVec_{\bold k}}(L,B^{\otimes (i+j)}\otimes M)e^{-i-j}
\end{align*}
is defined by $\mu_{i,j}(f\otimes g)=
(1_B^{\otimes j}\otimes f)\circ g$.
\end{enumerate}
\end{definition}
\begin{proposition}
The category of $B$-comodules $(B-com)$ forms
a DG-category by setting
\begin{enumerate}
\item
the complex of homomorphism by
$\underline{Hom}_{B-com}^{\bullet}(\bullet,\bullet)$, and 
\item
the composite homomorphism by $\mu$. The composite is denoted
as ``$\circ$''.
\end{enumerate}
\end{proposition}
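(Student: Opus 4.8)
The plan is to verify the DG-category axioms (as listed in Section 2) for $(B\text{-com})$ directly, treating the objects as $B$-comodules, the Hom-complexes as $\underline{Hom}^\bullet_{B\text{-com}}$, and the composition as $\mu$. The work splits into four parts: (i) each $\underline{Hom}^\bullet_{B\text{-com}}(M,N)$ is genuinely a complex, i.e. $d_H^2 = 0$; (ii) $\mu$ is a homomorphism of complexes, i.e. it is compatible with the differentials via the Leibniz rule $D(\psi\circ\varphi) = D\psi\circ\varphi + (-1)^{|\psi|}\psi\circ D\varphi$; (iii) $\mu$ is associative; (iv) the identity morphism $\mathrm{id}_M \in \underline{Hom}^0_{B\text{-com}}(M,M)$ — sitting in the degree-$0$, zeroth-cosimplicial summand $\underline{Hom}_{KVec_{\bold k}}(M,M)$ as $1_M$ — is closed and is a two-sided unit for $\mu$.

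For (i), the point is that $\underline{{\bf Hom}}^\bullet_{B\text{-com}}(M,N)$ is essentially the Hochschild/cobar-type double complex built from the cosimplicial object with coface maps given by $\Delta_M$ at the left end, the various $\Delta_B$ insertions in the middle, and $\Delta_N$ at the right end. Coassociativity of $\Delta_B$, together with the comodule coassociativity axioms for $M$ and $N$ (Definition \ref{def of B comod as DG cat}(1)(a)), gives the cosimplicial identities, so the alternating sum $d_H$ squares to zero; the signs $(-1)^{n-i+1}$ are exactly the standard cosimplicial signs, and one must also check $d_H$ commutes appropriately with the internal differential $\partial$ of $\underline{Hom}_{KVec_{\bold k}}$ so that the total differential $D$ on the associated simple complex squares to zero. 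This is the sign-bookkeeping step and is routine given the conventions of Section \ref{convention}.

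For (ii) and (iii), I would work with the formula $\mu_{ij}(f\otimes g) = (1_B^{\otimes j}\otimes f)\circ g$ and the shift twists $e^{-i}$ that are built into the definition of the simple complex. Associativity of $\mu$ reduces to the bifunctoriality identity $(1_B^{\otimes(j+k)}\otimes f)\circ(1_B^{\otimes k}\otimes g) = 1_B^{\otimes k}\otimes\bigl((1_B^{\otimes j}\otimes f)\circ g\bigr)$ for composable $f,g$ in $KVec_{\bold k}$, which is immediate from functoriality of $\otimes$. The compatibility of $\mu$ with $D$ is where the three terms of $d_H$ in (\ref{outer hochschild}) must be matched up: the first term $(1_B\otimes\varphi)\circ\Delta_M$ of $D\varphi$, applied after $\psi$, should cancel against the last term $(1_B^{\otimes n}\otimes\Delta_N)\circ\psi$ of $D\psi$ composed with $\varphi$, because inserting a $\Delta$ at the junction point can be read either as the "right" coface of the first factor or the "left" coface of the second; the middle $\Delta_B$-insertions and the internal $\partial$'s split cleanly across the two arguments. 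The sign $(-1)^{|\psi|}$ in the Leibniz rule comes precisely from moving $D$ past $\psi$ in the shifted-complex convention $(\varphi\otimes t)$ of Section \ref{convention}.

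The main obstacle I expect is \emph{not} any conceptual difficulty but the careful reconciliation of the various sign conventions — the cosimplicial signs $(-1)^{n-i+1}$ in $d_H$, the shift signs in $\nu$ from (\ref{tensor of double complex to simple complex}), the $e^{-i}$-twists in the definition of $\underline{Hom}^\bullet_{K\Cal C}$-style simple complexes, and the Koszul sign in $\varphi\otimes\psi$. In practice I would fix one convention (reading everything through the twisted generators $\varphi^\#$, as was done for DG complexes in (\ref{condition for DG complex twist})) so that all signs disappear, check the identities there, and then translate back. Once the signs are organized this way, items (i)–(iv) are all straightforward diagram chases using only coassociativity of $B$, the comodule axioms, and functoriality of $\otimes$; this is exactly why the proposition is stated as something to be checked rather than proved in detail.
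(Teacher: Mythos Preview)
Your proposal is correct and follows essentially the same route as the paper: the paper's proof concentrates on exactly your item (ii), carrying out the explicit computation of $d_H(f\circ g)$ and showing it equals $d_H(f)\circ g + (-1)^i f\circ d_H(g)$ by adding and subtracting the junction term $(1_B^{\otimes(j+1)}\otimes f)\circ(1_B^{\otimes j}\otimes\Delta_M)\circ g$, then declares associativity to be ``similar'' and leaves (i) and (iv) implicit. One small slip in your write-up: in the cancellation you describe, it is the \emph{last} term of $d_H\varphi$ (namely $(1_B^{\otimes j}\otimes\Delta_M)\circ\varphi$) that matches the \emph{first} term of $d_H\psi$ (namely $(1_B\otimes\psi)\circ\Delta_M$), both producing $\Delta_M$ at the junction; you have the labels ``first'' and ``last'' swapped, and $(1_B\otimes\varphi)\circ\Delta_M$ is not a term of $d_H\varphi$ since $\varphi$ has domain $L$.
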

\begin{proof}
To show that the multiplication homomorphism
$\mu$ is a homomorphism of complex, it is enough to consider
the outer differentials.
Let 
$$
f\otimes g\in 
\underline{Hom}_{KVec_{\bold k}}^\bullet(M,B^{\otimes i}\otimes
     N)\otimes
\underline{Hom}_{KVec_{\bold k}}^\bullet(L,B^{\otimes j}\otimes M)
$$
and $d_H$ be the outer differential.
Then we have
\begin{align*}
d_H(f\circ g)
=&(-1)^{n+1}(1_B^{\otimes (j+1)}\otimes f)\circ (1_B\otimes g)\circ\Delta_L
\\
&+\sum_{p=1}^n(-1)^{n-p+1}
(1^{\otimes (p-1)}_B\otimes \Delta_B\otimes 1^{\otimes (n-p)}_B
\otimes 1_N)\circ
(1_B^{\otimes j}\otimes f)\circ g
\\
&+(1^{\otimes n}_B\otimes \Delta_N)\circ (1_B^{\otimes j}\otimes f)\circ g
\\
=&(-1)^{i+j+1}(1_B^{\otimes (j+1)}\otimes f)\circ (1_B\otimes g)\circ\Delta_L
\\
&+(-1)^i\sum_{p=1}^j(-1)^{j-p+1}
(1_B^{\otimes (j+1)}\otimes f)\circ
(1^{\otimes (p-1)}_B\otimes \Delta_B\otimes 1^{\otimes (j-p)}_B
\otimes 1_N)\circ g
\\
&+(-1)^i(1_B^{\otimes (j+1)}\otimes f)\circ 
(1_B^{\otimes j}\otimes \Delta_M)\circ
g
\\
&+(-1)^{i+1}(1_B^{\otimes (j+1)}\otimes f)\circ 
(1_B^{\otimes j}\otimes \Delta_M)\circ
g
\\
&+\sum_{q=1}^i(-1)^{i-q+1}
(1^{\otimes (q+j-1)}_B\otimes \Delta_B\otimes 1^{\otimes (i-q)}_B
\otimes 1_N)\circ
(1_B^{\otimes j}\otimes f)\circ g
\\
&+(1^{\otimes (i+j)}_B\otimes \Delta_N)\circ (1_B^{\otimes j}\otimes f)\circ g
\\
=&
(-1)^if\circ d_H(g)+
d_H(f)\circ g.
\end{align*}
The associativity for the composite can be proved similarly.
\end{proof}
Let $A$ be a DGA and $\epsilon:A\to \bold k$ be
an augmentation. Until the end of this subsection,
let $B=B(\epsilon\mid A\mid \epsilon)$
be the simplicial bar complex 
and $B_{red}=B_{red}(A,\epsilon)$ be the reduced bar complex
for the augmentation $\epsilon$.
Let $\pi_\alpha:B\to \bold k\overset{\alpha}\otimes \bold k$
be the projection.
\begin{definition}[$(B-com)^{red,b}$]
Let $S\subset \bold Z$ a subset of $\bold Z$.
A $B$-comodule
$M$ is said to be supported on $S$ if and only if the composite
$$
M\overset{\Delta_M}\to B\otimes M 
\overset{\pi_\alpha \otimes 1_M}\to \bold k
\overset{\alpha}\otimes\bold k\otimes M
$$
is zero if $\alpha \notin S$. A module $M$ supported on a finite
set $S$ is called a bounded $B$ comodule.
The class of objects in the DG category $(B-com)^{red,b}$ is defined by
bounded $B$ comodules and the complex of morphisms from $M$ to $N$ 
is defined by $\underline{Hom}_{B_{red}-com}(M_{red},N_{red})$.
Here $M_{red}$ and $N_{red}$ are the 
$B_{red}$-comodules induced by $M$ and $N$.
\end{definition}

The rest of this section is spent to prove the following
theorem.
\begin{theorem}[Main Theorem]
\label{first main theorem}
DG categories $K^b\Cal C_A$ and $(B-com)^{red,b}$ are homotopy equivalent.
\end{theorem}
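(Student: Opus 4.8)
The plan is to construct an explicit DG functor $\Phi\colon K^b\Cal C_A \to (B\text{-}com)^{red,b}$ and show it is a homotopy equivalence in the sense of the definition, i.e. essentially surjective and inducing isomorphisms on cohomology of all $\underline{Hom}$ complexes. The construction of $\Phi$ on objects should send a bounded DG complex $M = (\{M^i\}, \{d_{ij}\})$ in $\Cal C_A$ to the graded $\bold k$-vector space $\bigoplus_i M^i e^{-i}$ (as a complex, using the $\Cal C_A$-data $M^i \in KVec_{\bold k}$ together with the shifts), equipped with a $B$-comodule structure. The key point is that a DG complex structure on $\{M^i\}$ in $\Cal C_A$ is, by the definition of $\Cal C_A$, a collection of maps $d_{ij}^\# \in \underline{Hom}_{KVec_{\bold k}}^1(M^j e^{-j}, A \otimes M^i e^{-i})$ satisfying the Maurer--Cartan type relation \eqref{condition for DG complex twist}; this is exactly the data of a twisting cochain with values in $A$, hence (via the standard bar-cobar adjunction) of a $B(\epsilon\mid A\mid\epsilon)$-comodule structure on $\bigoplus_i M^i e^{-i}$, after dividing out by the augmentation $\epsilon$ on both sides. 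Concretely, I would write $\Delta_M$ as the sum over the components $[\pi(d_{i_0 i_1})\mid \pi(d_{i_1 i_2})\mid \cdots \mid \pi(d_{i_{n-1}i_n})]$ applied appropriately, where $\pi(x)=x-\epsilon(x)$, and check coassociativity and counitarity using \eqref{condition for DG complex twist}. Boundedness of $M$ forces the comodule to be supported on a finite set, so it lands in $(B\text{-}com)^{red,b}$; the reduced bar complex $B_{red}$ is the natural recipient here, which is why the target category is defined using $\underline{Hom}_{B_{red}-com}$, and Theorem \ref{comparison of simp and chen} guarantees no information is lost in passing between $B$ and $B_{red}$.

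On morphisms, a degree-$p$ element $\varphi \in \underline{Hom}_{K\Cal C_A}^p(M,N)$ has components $\varphi_{r,q}^\# \in \underline{Hom}_{\Cal C_A}^p(M^q e^{-q}, N^r e^{-r}) = \underline{Hom}_{KVec_{\bold k}}^p(M^q e^{-q}, A\otimes N^r e^{-r})$, and I would send $\varphi$ to the comodule homomorphism whose Hochschild components in $\underline{Hom}_{KVec_{\bold k}}(M, B^{\otimes m}\otimes N)$ are assembled from composites of $\pi(d_{ij})$'s on both sides with the $\varphi_{r,q}$'s — again the standard twisting-cochain transport of morphisms. The essential verification is that $\Phi$ intertwines the differential $D$ of \eqref{...} in $K\Cal C_A$ (the formula with $\partial(\varphi^\#)$, $\sum d^\# \circ \varphi^\#$, $\sum \varphi^\# \circ d^\#$) with the Hochschild differential $d_H$ of \eqref{outer hochschild} on the comodule side; term by term, $\partial$ matches the $\Delta_N$-term and the internal differential, while the two $d$-sums match the $\Delta_B$-sum after unwinding the bar differential of $B$ via \eqref{reduced outer}. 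Similarly $\Phi$ respects composition because $\mu_{ij}(f\otimes g) = (1_B^{\otimes j}\otimes f)\circ g$ is literally the transported version of the $K\Cal C_A$ composite $(\psi\circ\varphi)^\#_{p,q} = \sum_r \psi^\#_{p,r}\circ\varphi^\#_{r,q}$.

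For essential surjectivity: given a bounded $B$-comodule $M$ supported on a finite set $S$, the composites $M \xrightarrow{\Delta_M} B\otimes M \xrightarrow{\pi_\alpha\otimes 1} (\bold k\overset{\alpha}\otimes\bold k)\otimes M$ decompose $M$ as $\bigoplus_{i\in S} M^i$ with each $M^i$ a complex of $\bold k$-vector spaces, and the higher bar-length components of $\Delta_M$ supply maps $M^j \to A\otimes M^i$ (for the relevant indices after re-indexing $S$) satisfying \eqref{condition for DG complex twist}; this recovers a DG complex in $\Cal C_A$ mapping to $M$ under $\Phi$, so $\Phi$ is even surjective on objects up to isomorphism. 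For full faithfulness at the level of cohomology: here I would compare, for fixed $M,N$, the bar/length filtration on both $\underline{Hom}_{K\Cal C_A}^\bullet(M,N)$ and $\underline{Hom}_{B_{red}-com}(\Phi M, \Phi N)$, observe that $\Phi$ is filtered, and compute the induced map on associated-graded pieces — on the graded level the complexes become finite sums of shifted copies of $\underline{Hom}_{KVec_{\bold k}}(M^q, H^\bullet(A)^{\otimes m}\otimes N^r)$-type terms and the comparison reduces, as in the proof of Theorem \ref{comparison of simp and chen}, to the acyclicity statements of Proposition \ref{acyclicity of free bar complex} together with Proposition \ref{quasi-iso for typical basic}. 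A convergence argument for the (possibly unbounded in bar-length but complete) filtration then upgrades the graded-level quasi-isomorphism to a quasi-isomorphism of the total $\underline{Hom}$ complexes.

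The hard part will be the bookkeeping of signs and the precise matching of the differential $D$ on $K\Cal C_A$ with $d_H$: both formulas carry several sign conventions (the $(-1)^{(i-p)(p-j+1)}$ in \eqref{condition for DG complex}, absorbed but not eliminated by the $t_{ji}$-twisting; the $(-1)^i$, $(-1)^{n-i+1}$, $(-1)^{n+1}$ in \eqref{outer hochschild}; the $(-1)^p$ in the reduced bar differential \eqref{reduced outer}; and the Koszul signs in the $\nu$ isomorphism \eqref{tensor of double complex to simple complex} used to pass from double to simple complexes), and making all of these cohere under $\Phi$ is where the real work lies. A secondary technical obstacle is the treatment of the inverse limit defining $\underline{Hom}_{K\Cal C}^i$ versus the direct-sum-over-bar-length structure on the comodule side; one must check $\Phi$ is well-defined on the limit and that the filtration used for the spectral-sequence comparison is exhaustive and Hausdorff so that the $E_2$-isomorphism implies the claimed quasi-isomorphism.
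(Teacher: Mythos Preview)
Your object-level correspondence matches the paper's exactly: the paper builds $\varphi: ob(K^b\Cal C_A)\to ob(B\text{-}com)^{red,b}$ by $s(M)=\bigoplus_i M^ie^{-i}$ with coaction $\Delta=\sum_\alpha D_\alpha$ assembled from iterated $d_{ij}$'s, and an inverse $\psi$ using the projectors $p_{\alpha_0}$ and the $(\alpha_0,\alpha_1)$-components of $\Delta$, just as you describe. So that part is fine.

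Two points of divergence deserve comment. First, on morphisms the paper goes in the \emph{opposite} direction: it defines $\psi:(B\text{-}com)^{red,b}\to K^b\Cal C_A$ by taking $f=\sum_n f^{(n)}\in\underline{Hom}_{B_{red}\text{-}com}$, projecting $f^{(n)}$ to $I^{\otimes n}\otimes N^\beta$, and then multiplying all the $I$-factors together in $A$ to get a single element of $\underline{Hom}_{KVec_{\bold k}}(M^\alpha e^{-\alpha},A\otimes N^\beta e^{-\beta})$. This is considerably simpler to write down and to check against the differentials than your proposed $\Phi$ on morphisms, which would have to insert strings of $\pi(d_{ij})$'s on both sides.

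Second, and more importantly, your proposed proof that the map on $\underline{Hom}$-complexes is a quasi-isomorphism has a gap. You propose a bar/length-filtration spectral-sequence comparison invoking Propositions~\ref{acyclicity of free bar complex} and~\ref{quasi-iso for typical basic}, but $\underline{Hom}_{K\Cal C_A}(M,N)$ carries no natural bar-length filtration: each component $\underline{Hom}_{\Cal C_A}(M^q,N^r)=\underline{Hom}_{KVec_{\bold k}}(M^q,A\otimes N^r)$ involves only a single factor of $A$, so there is nothing on that side to match the filtration by tensor-length $n$ on the Hochschild side, and the cited propositions concern $B(A)$ itself rather than morphism complexes between comodules. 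The paper sidesteps this entirely with a much shorter d\'evissage: it uses the \emph{finite} filtration $Fil^\alpha N=\bigoplus_{i\ge\alpha}N^ie^{-i}$, whose graded pieces have trivial $B$-coaction, and the long exact sequences for $0\to Fil^{\alpha+1}\to Fil^\alpha\to Gr^\alpha\to 0$ reduce the claim (first in $N$, then in $M$) to the case of trivial coaction on both sides, and finally to $M=N=\bold Q$, which is checked directly. This avoids all convergence and filtration-compatibility issues and makes the boundedness hypothesis do the real work.
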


\subsection{Correspondences on objects}

We construct a one to one correspondence $\varphi$ from
the class of objects of $(K^b\Cal C_A)$ to that of objects
of $(B-com)^{red,b}$.
In this section, we simply denote $\bold B_{\alpha}$ and $B$
for $\bold B_{\alpha}(\epsilon\mid A\mid \epsilon)$ and $B(\epsilon\mid A\mid \epsilon)$,
respectively.

\subsubsection{Definition of $\varphi:ob(K^b\Cal C_A)\to 
ob(B-com)^{red,b}$}
Let $M=(M^{i},d_{ij})$ be an object of $K^b\Cal C_A$, where 
$M^i \in \Cal C_A$.
We set $s(M)$ by the graded vector space
$\oplus_{i}M^{i}e^{-i}$.
The morphism 
$d_{ij}\in \underline{Hom}_{KVec_{\bold k}}^{j-i+1}
(M^j, A^{\bullet}\otimes M^i)$ defines an element
\begin{align}
\bold D_{ji}=d_{ij}\otimes t_{-i,-j}\in &
\underline{Hom}_{KVec_{\bold k}}^1
(M^{j}e^{-j},A^{\bullet}\otimes M^{i}e^{-i}) \\
\nonumber
=&
\underline{Hom}_{KVec_{\bold k}}^1
(M^{j}e^{-j},
(\bold k\overset{j}\otimes A^{\bullet}
\overset{i}\otimes \bold k) \otimes
 M^{i}e^{-i}) \\
\nonumber
=&
\underline{Hom}_{KVec_{\bold k}}^1
(M^{j}e^{-j},
\bold B_{ji}
\otimes
 M^{i}e^{-i}). 
\end{align}
We consider the following linear map
$$
M^ke^{-k} \overset{\bold D_{kj}}\to \bold B_{kj}\otimes M^je^{-j}
\overset{1\otimes \bold D_{ji}}\to \bold B_{kji}\otimes M^ie^{-i}
\overset{\mu\otimes 1}\to \bold B_{ki}\otimes M^ie^{-i},
$$
where $\mu$ is the multiplication map.
The condition (\ref{condition for DG complex}) is equivalent to the relation
\begin{align}
\label{another expression for DG condition}
& \sum_{j:k<j<i}(\mu \otimes 1)(1\otimes \bold D_{ji})\bold D_{kj} \\
\nonumber
+&(d_A\otimes 1+1\otimes d_{M^i})\bold D_{ki}+
\bold D_{ki} d_{M^{k}}=0
\end{align}
in $\underline{Hom}_{KVec_{\bold k}}^2
(M^{k}e^{-k},\bold B_{ki}\otimes M^{i}e^{-i})$.

We set $s(M)=\oplus_i M^ie^{-i}$.
Then the sum $\bold D=\sum_{j<i}\bold D_{j,i}$ defines an element
in 
$
Hom_{\Cal C_{\bold k}}^1(s(M),A^{\bullet}\otimes s(M)).
$
By composing $\epsilon\otimes 1\in
Hom_{\Cal C_{\bold k}}^0(A^{\bullet}\otimes s(M), s(M)).
$
we have
\begin{equation}
\label{diffrential for N}
D_{\epsilon}=(\epsilon \otimes 1)\bold D
\in \underline{Hom}_{KVec_{\bold k}}^1(s(M), s(M)).
\end{equation}
\begin{lemma}
We set $d_M=\sum_i d_{M^i}$.
Then 
$
\delta_M=d_{M}+D_{\epsilon}
$ 
defines a differential 
on $s(M)=\oplus_i M^ie^{-i}$.
\end{lemma}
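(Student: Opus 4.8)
The plan is to verify directly that $\delta_M = d_M + D_\epsilon$ squares to zero as a degree one endomorphism of the graded vector space $s(M) = \oplus_i M^i e^{-i}$. Expanding, we get $\delta_M^2 = d_M^2 + d_M D_\epsilon + D_\epsilon d_M + D_\epsilon^2$. Since $d_M = \sum_i d_{M^i}$ is the direct sum of the differentials of the complexes $M^i$, we have $d_M^2 = 0$ automatically. So the task reduces to showing $d_M D_\epsilon + D_\epsilon d_M + D_\epsilon^2 = 0$.

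First I would record that $D_\epsilon = (\epsilon \otimes 1)\bold D$ with $\bold D = \sum_{j<i}\bold D_{ji}$, and that $D_\epsilon^2$ unwinds, using the multiplicativity of the augmentation $\epsilon$ (i.e. $\epsilon$ is a DGA homomorphism, so $\epsilon \circ \mu = \epsilon \otimes \epsilon$ on $A \otimes A$), into $\sum_{k<j<i}(\epsilon\otimes 1)(\mu\otimes 1)(1\otimes \bold D_{ji})\bold D_{kj}$, matching the first sum in the DG-complex relation \eqref{another expression for DG condition} after applying $\epsilon\otimes 1$. Meanwhile $d_M D_\epsilon + D_\epsilon d_M$ is precisely $(\epsilon\otimes 1)$ applied to the expression $(d_A\otimes 1 + 1\otimes d_{M^i})\bold D_{ki} + \bold D_{ki}\,d_{M^k}$ summed over $k < i$, once one observes that $\epsilon$ is a map of complexes so $\epsilon \circ d_A = 0$ (the target $\bold k$ has zero differential) and hence the $d_A\otimes 1$ term drops out, leaving exactly $d_{s(M)}D_\epsilon + D_\epsilon d_{s(M)}$ where $d_{s(M)}$ is identified with $d_M$ after the shift. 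Adding these contributions and invoking \eqref{another expression for DG condition} (which is equivalent to the defining condition \eqref{condition for DG complex} for a DG complex), the whole sum vanishes in $\underline{Hom}^2_{KVec_{\bold k}}(s(M),s(M))$.

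The main step to be careful about is bookkeeping of signs and shifts: the passage from $d_{ij}$ to $\bold D_{ji} = d_{ij}\otimes t_{-i,-j}$ introduces the sign conventions of the convention section, and one must check that the sign appearing in the DG-complex condition \eqref{condition for DG complex} is exactly absorbed by the twist, as stated in \eqref{condition for DG complex twist}; then the relation \eqref{another expression for DG condition} is the clean, sign-free form after twisting, and applying $\epsilon\otimes 1$ commutes with everything because $\epsilon$ is a degree-zero chain map and an algebra map. I would also note that boundedness of $M$ ensures all the sums over intermediate indices $j$ are finite, so no convergence issue arises. Thus the hard part is not conceptual but simply matching the three terms $d_M D_\epsilon$, $D_\epsilon d_M$, $D_\epsilon^2$ term-by-term against \eqref{another expression for DG condition}; once this identification is made, the lemma follows immediately.
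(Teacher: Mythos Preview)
Your proposal is correct and follows essentially the same route as the paper: both apply $(\epsilon\otimes 1)$ to the summed relation \eqref{another expression for DG condition}, use that $\epsilon$ is an algebra map to identify $(\epsilon\otimes 1)(\mu\otimes 1)(1\otimes\bold D)\bold D$ with $D_\epsilon^2$, and use that $\epsilon$ is a chain map (so $\epsilon\circ d_A=0$) to reduce the remaining terms to $d_M D_\epsilon + D_\epsilon d_M$. The paper phrases these two identifications as commutative diagrams rather than in words, but the content is identical.
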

\begin{proof}
By (\ref{another expression for DG condition}),
we have
\begin{align*}
 (\mu \otimes 1)(1\otimes \bold D)\bold D 
+(d_A\otimes 1+1\otimes d_{M})\bold D+
\bold D d_{M}=0
\end{align*}
By the commutative diagram
$$
\begin{matrix}
s(M) &\overset{\bold D}\to &
A^{\bullet}\otimes s(M)
&\overset{\epsilon\otimes 1}\to 
& s(M) \\
& & \downarrow  1\otimes \bold D
& & \downarrow \bold D\\
& & A^{\bullet}\otimes A^{\bullet}\otimes s(M)
& \overset{\epsilon\otimes 1 \otimes 1}\to &
A^{\bullet}\otimes s(M) \\
& & \downarrow  \mu \otimes 1
& & \downarrow \epsilon\otimes 1\\
& & A^{\bullet}\otimes s(M)
& \overset{\epsilon\otimes 1}\to &
s(M),
\end{matrix}
$$
we have $(\epsilon\otimes 1) (\mu \otimes 1)(1\otimes \bold D)\bold D
 =D_{\epsilon}^2$, and by
$$
\begin{matrix}
A\otimes s(M) &\overset{d_A\otimes 1+ 1\otimes d_M}\to &A\otimes s(M) \\
\epsilon \otimes 1\downarrow & & \downarrow \epsilon\otimes 1 \\
s(M) &\underset{d_M}\to &s(M),
\end{matrix}
$$
we have $(\epsilon\otimes 1)
(d_A\otimes 1+ 1\otimes d_M)=d_M(\epsilon\otimes 1)$.
Therefore we have $(d_M+D_{\epsilon})^2=0$
\end{proof}
\begin{definition}
Let $\alpha=(\alpha_0<\cdots<\alpha_n)$ be a sequence of integers.
We define 
$\bold D_{\alpha}=\bold D_{\alpha_0, \dots, \alpha_n}\in 
\underline{Hom}_{\Cal C_{\bold k}}^n(M^{\alpha_0}e^{-\alpha_0},
\bold B_{\alpha}\otimes M^{\alpha_n}e^{-\alpha_n})$
inductively by the following composite homomorphism:
\begin{align*}
M^{\alpha_0}e^{-\alpha_0} &
\overset{\bold D_{\alpha_0, \dots, \alpha_{n-1}}}
\longrightarrow \bold B_{\alpha_0, \dots, \alpha_{n-1}}
\otimes M^{\alpha_{n-1}}e^{-\alpha_{n-1}} \\
&
\overset{1\otimes \bold D_{\alpha_{n-1},\alpha_{n}}}
\longrightarrow
(\bold B_{\alpha_0, \dots, \alpha_{n-1}}
\overset{\alpha_{n-1}}\otimes A^{\bullet})
\otimes M^{\alpha_{n}}e^{-\alpha_n} =
\bold B_{\alpha_0, \dots, \alpha_{n}}
\otimes M^{\alpha_{n}}e^{-\alpha_n}.
\end{align*}
By the isomorphism
$$
\begin{matrix}
\underline{Hom}_{\Cal C_{\bold k}}^n(M^{\alpha_0}e^{-\alpha_0},
\bold B_{\alpha}\otimes M^{\alpha_n}e^{-\alpha_n}) &\to &
\underline{Hom}_{\Cal C_{\bold k}}^0(M^{\alpha_0}e^{-\alpha_0},
\bold B_{\alpha}e^n\otimes M^{\alpha_n}e^{-\alpha_n}) \\
\varphi &\mapsto &(1_A^{\otimes n}\otimes t^{-n}\otimes 1)\varphi
\end{matrix}
$$
the element corresponding to $\bold D_{\alpha}$ is denoted by $D_{\alpha}$.
We set 
$$\Delta=\sum_{\alpha}D_\alpha:s(M)\to B\otimes s(M).$$
\end{definition}
\begin{proposition}
\begin{enumerate}
\item
The map $\Delta$ is a homomorphism of complexes.
\item
The above homomorphism $\Delta$ defines a $B$-comodule structure
on $s(M)$.
\end{enumerate}
\end{proposition}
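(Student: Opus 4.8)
The plan is to verify the two assertions---that $\Delta\colon s(M)\to B\otimes s(M)$ is a chain map, and that it satisfies coassociativity and counitarity---by reducing each to statements about the building blocks $\bold D_{\alpha}$ together with the DG-complex condition (\ref{condition for DG complex twist}). First I would record the basic compatibilities of $\bold D_{\alpha}$: by construction $\bold D_{\alpha}$ is built by iterating $\bold D_{ij}=d_{ij}\otimes t_{-i,-j}$ and composing with multiplications $\mu$ of $A$, so using the associativity of $\mu$ and of the composite in the DG category $\Cal C_A$, one gets a ``splitting'' relation
\begin{equation*}
(\mu_{\text{at }\alpha_k}\otimes 1)\circ(1\otimes\bold D_{\alpha_k,\dots,\alpha_n})\circ\bold D_{\alpha_0,\dots,\alpha_k}
=\bold D_{\alpha_0,\dots,\alpha_n}
\end{equation*}
for any intermediate index $\alpha_k$, and more generally a factorization of $\bold D_{\alpha}$ through any coarsening of $\alpha$. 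This is the combinatorial heart; everything else is bookkeeping of signs via the $t_{ji}$-convention fixed in the Conventions section.

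For part (1), I would compute $D\circ\Delta$ using the formula for $D_H$ (equivalently, the differential on $\underline{Hom}_{K\Cal C}$): applying $\partial$ to $D_{\alpha}=(1_A^{\otimes n}\otimes t^{-n}\otimes 1)\bold D_{\alpha}$ and expanding $\partial\bold D_{\alpha}$ by the Leibniz rule over the iterated composition produces, for each $\alpha$, a sum of terms of three types: (a) an inner-differential term $\partial(d_{\alpha_{p},\alpha_{p+1}}^{\#})$ inserted at one slot, (b) a composite $d_{\alpha_p,\alpha_q}^{\#}\circ d_{\alpha_q,\alpha_{p+1}}^{\#}$ of two structure maps, which by the splitting relation above is $\bold D$ applied across a \emph{refinement} of $\alpha$ by one extra index, and (c) the action of $d_{s(M)}$ and $d_A$ at the ends. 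The content of (\ref{condition for DG complex twist}) is precisely that (a) cancels the part of (b) coming from refinements \emph{within} the existing index set, so what survives assembles exactly into the expression $d_H\Delta=\pm(\text{coface insertions})+(\text{internal differentials})$ that says $\Delta$ commutes with the total differential on $B\otimes s(M)$. Concretely this means matching my surviving terms against the outer differential $d_{\bold B}$ of the simplicial bar complex (insertions of multiplications, plus the two augmentation terms at the ends, which here are $\epsilon$ on both sides) together with the inner differential $\delta$; the signs will work out because both conventions descend from the same $(-1)^{\deg}$ rule.

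For part (2), coassociativity follows formally from the splitting relation: $(\Delta_B\otimes 1)\Delta$ and $(1\otimes\Delta)\Delta$ both unwind to $\sum D_{\alpha}$ grouped according to where one ``cuts'' a chain $M^{\alpha_0}\to\cdots\to M^{\alpha_n}$ at an intermediate $\alpha_k$, and the two ways of iterating the cut agree with the definition of $\Delta_{\epsilon_1,\epsilon_2,\epsilon_3}$ from Proposition \ref{properties of coproduct}. Counitarity is the observation that $(u\otimes 1)\circ\Delta$ kills every $D_{\alpha}$ with $|\alpha|\geq 1$ (because $u$ is supported on length-zero indices $\alpha=(\alpha_0)$) and on the $\alpha=(\alpha_0)$ component $D_{(\alpha_0)}$ is the identity of $M^{\alpha_0}e^{-\alpha_0}$ by construction. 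I expect the main obstacle to be purely a sign-chasing one: organizing the Leibniz expansion of $\partial\bold D_{\alpha}$ so that the ``uncomfortable sign'' $(-1)^{(i-p)(p-j+1)}$ in (\ref{condition for DG complex}) is absorbed cleanly---which is exactly why the twisted form (\ref{condition for DG complex twist}) and the shorthand $d_{ij}^{\#}$ were introduced---and then matching those signs to the $(-1)^{n-i}$ in $d_{\beta,\alpha}$ and the $(-1)^{n+1}$, $(-1)^{n-i+1}$ in $d_H$. Once the $\#$-notation is used throughout, I expect every sign to reduce to the formal commutation rule for $t_{ji}$, and the argument becomes essentially formal.
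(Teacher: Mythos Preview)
Your proposal is correct and follows essentially the same route as the paper: for part (1) the paper also expands the total differential applied to $D_{\alpha}$, isolates the bar-complex face maps (your type (b), its terms (\ref{2-1})--(\ref{2-3})), the inner $d_A$'s (its (\ref{1-2})), and the $d_{s(M)}$ term (its (\ref{1-3})), and then invokes the DG-complex relation in the form (\ref{another expression for DG condition}) to produce the cancellation. The only organizational difference is that the paper introduces an auxiliary map $\bold E(\alpha_0,\dots;\alpha_i;\dots,\alpha_n)$ to track where $d_{M^{\alpha_i}}$ is inserted and make the telescoping explicit, and it declares part (2) ``easy to check'' without writing out the splitting-relation argument you sketch.
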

\begin{proof}
For the proof of (2), the coassociativity 
and counitarity is easy to check, so we omit the proof.
We prove the statement (1).
We compute the right hand side of the following equality:
\begin{align}
\nonumber
&dD_{\alpha_0,\dots, \alpha_n}(x) \\
\label{1-1}
=
& (d_{\bold B}\otimes t\otimes 1_M)D_{\alpha_0,\dots, \alpha_n}(x)\\
\label{1-2}
&+\sum_{i=0}^{n-1}(1_A^{\otimes i}\otimes d_A\otimes 1_A^{\otimes (n-i-1)}\otimes 1_M)
D_{\alpha_0,\dots, \alpha_n}(x) \\
\label{1-3}
&+(1_A^{\otimes n}\otimes d_{s(M)})D_{\alpha_0,\dots, \alpha_n}(x).
\end{align}
Here $d_{\bold B}$ is the outer differential defined in 
(\ref{simp bar outer diff}).
Then the term (\ref{1-1}) is equal to
\begin{align}
\nonumber
& (d_{\bold B}\otimes t \otimes 1_M)
D_{\alpha_0,\dots, \alpha_n}(x)= \\
\label{2-1}
&
(1^{\otimes (n-1)}_A\otimes t\otimes 1_M)(\epsilon\otimes 1_A^{\otimes (n-1)}\otimes 1_M)D_{\alpha_0,\dots, \alpha_n}(x) \\
\label{2-2}
&+
(1^{\otimes {n-1}}\otimes t\otimes 1_M)\sum_{i=1}^{n-1}(-1)^{i}
(1^{\otimes (i-1)} \otimes \mu  
\otimes 1^{\otimes (n-i-1)}\otimes 1_M)
D_{\alpha_0,\dots, \alpha_n}(x) \\
\label{2-3}
&+(-1)^{n}
(1^{\otimes (n-1)}\otimes t\otimes 1_M)(1^{\otimes (n-1)}  \otimes \epsilon\otimes 1_M)
D_{\alpha_0,\dots, \alpha_n}(x).
\end{align}
We define a map $$
\bold E(\alpha_1,\dots \alpha_{i-1};\alpha_i;
\alpha_{i+1},\dots ,\alpha_n)\in
\underline{Hom}_{\Cal C_{\bold k}}^{n+1}
(M^{\alpha_0}e^{-\alpha_0},\bold B_{\alpha_0,\dots, \alpha_n}\otimes
 M^{\alpha_n}e^{-\alpha_n})
$$
by the composite
\begin{align*}
M^{\alpha_0}e^{-\alpha_0}\overset{\bold D_{\alpha_0,\dots,\alpha_i}}\to 
\bold B_{\alpha_0,\dots, \alpha_i}\otimes M^{\alpha_i}e^{-\alpha_i}
&\overset{1_A^{\otimes i}\otimes d_{M^{\alpha_i}}}
\to \bold B_{\alpha_0,\dots, \alpha_i}\otimes M^{\alpha_i}e^{-\alpha_i} \\
&\overset{1_A^{\otimes i}\otimes \bold D_{\alpha_i,\dots ,\alpha_n}}\to
\bold B_{\alpha_0,\dots, \alpha_n}\otimes M^{\alpha_n}e^{-\alpha_n}.
\end{align*}
Then by the relation (\ref{another expression for DG condition}), 
we have
\begin{align*}
& \sum_{\alpha_i<\beta<\alpha_{i+1}}
(1_A^{\otimes i}\otimes \mu \otimes 1_A^{\otimes (n-i)}\otimes 1_M)
\bold D_{\alpha_0, \dots ,\alpha_i,\beta, \alpha_{i+1},\dots, \alpha_n} \\
= &
-\bold E(\alpha_0,\dots ;\alpha_i;\alpha_{i+1},\dots, \alpha_n)
-\bold E(\alpha_0,\dots ;\alpha_{i+1};\alpha_{i+2},\dots, \alpha_n) \\
&+(-1)^{n-i+1}
(1_A^{\otimes (i-1)}\otimes d_A \otimes 1_A^{\otimes (n-i-1)}\otimes 1_M)
\bold D_{\alpha_0, \dots ,\alpha_i,\alpha_{i+1},\dots, \alpha_n}
\end{align*}
and therefore
the term (\ref{2-2}) is equal to
\begin{align}
\nonumber
&
 \sum_{i=1}^{n-1}\sum_{\alpha_0<\dots<\alpha_i<\alpha_{i+1}<\alpha_{i+2}<\dots <\alpha_n}
(-1)^{i} 
(1_A^{\otimes (n-1)}\otimes t\otimes 1_M)\\
\nonumber
&
(1_A^{\otimes (i-1)}\otimes \mu \otimes 1_A^{\otimes (n-i-1)}\otimes 1_M)
D_{\alpha_0, \dots ,\alpha_i,\alpha_{i+1}, 
\alpha_{i+2},\dots, \alpha_n} =\\
\label{3-1}
&
\sum_{\alpha_0<\dots <\alpha_{n-1}}
D_{\alpha_0,\dots, \alpha_{n-1}}d_M
\\
\label{3-2}
&
-\sum_{\alpha_0<\dots <\alpha_{n-1}}
(1^{\otimes (n-1)}\otimes d_M)D_{\alpha_0,\dots, \alpha_{n-1}} \\
\label{3-3}
&- 
 \sum_{i=1}^{n-1}\sum_{\alpha_0<\dots <\alpha_{n-1}}
(1_A^{\otimes (i-1)}\otimes d_A \otimes 1_A^{\otimes (n-i-1)}\otimes 1_M)
D_{\alpha_0,\dots, \alpha_{n-1}}.
\end{align}
Since
\begin{align*}
&\text{(\ref{1-3})}+\text{(\ref{2-3})}+\text{(\ref{3-2})}=0, \quad
\text{(\ref{2-1})}+\text{(\ref{3-1})}=\Delta \delta_M,  \\
& \text{(\ref{1-2})}+\text{(\ref{3-3})}=0,
\end{align*}
we have
$\Delta\delta_M=d\Delta.$
\end{proof}

\subsubsection{Definition of $\psi:ob(B-com)^{red,b}\to 
ob(K^b\Cal C_A)$} 
Let $N$ be a $B$-comodule.
By the homomorphism $\Delta:N \to B\otimes N$, we have degree preserving
linear maps
\begin{align*}
&\Delta^{(0)}:N \to \oplus_{\alpha_0}\bold k\overset{\alpha_0}\otimes N, \\
&\Delta^{(1)}:N \to \oplus_{\alpha_0<\alpha_1}
\bold k\overset{\alpha_0}\otimes  A\overset{\alpha_1}\otimes  Ne,\\
&\Delta^{(2)}:N \to \oplus_{\alpha_0<\alpha_1<\alpha_2}
\bold k\overset{\alpha_0}\otimes  A\overset{\alpha_1}\otimes  A
\overset{\alpha_2}\otimes Ne^2.
\end{align*}
The $\alpha_0$-component,
the $(\alpha_0,\alpha_1)$-component and
the $(\alpha_0,\alpha_1,\alpha_2)$-component of
$\Delta^{(0)}$, $\Delta^{(1)}$ and $\Delta^{(2)}$ 
are denoted as $p_{\alpha_0}$,
$D_{\alpha_0,\alpha_1}$ and $D_{\alpha_0,\alpha_1,\alpha_2}$, respectively.
\begin{lemma}
\begin{enumerate}
\item
The linear maps $p_{\alpha_0}$ are complete projection orthogonal to 
each other. Moreover these orthogonal projections defines a
direct sum decomposition of $N$.
\item
We have the following equalities
\begin{align}
\nonumber
(1\otimes p_{\alpha_1})D_{\alpha_0,\alpha_1} & =
D_{\alpha_0,\alpha_1}=
D_{\alpha_0,\alpha_1}p_{\alpha_0},
\\
\label{composite of connection}
D_{\alpha_0,\alpha_1,\alpha_2}& =(1\otimes D_{\alpha_1,\alpha_2})
D_{\alpha_0,\alpha_1}.
\end{align}
\end{enumerate}
\end{lemma}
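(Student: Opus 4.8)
The statement is about a $B$-comodule $N$ (where $B=B(\epsilon\mid A\mid\epsilon)$) and the maps $p_{\alpha_0}$, $D_{\alpha_0,\alpha_1}$, $D_{\alpha_0,\alpha_1,\alpha_2}$ extracted from $\Delta^{(0)},\Delta^{(1)},\Delta^{(2)}$ by projecting onto the various $\bold B_\alpha$-components of $B$. Everything should fall out of the two comodule axioms—coassociativity and counitarity—applied componentwise, together with the explicit formula for the coproduct $\Delta_{\epsilon,\epsilon,\epsilon}$ on $B$ and the counit $u$.

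First I would treat part (1). The counit $u:B\to\bold k$ is $\sum_{\alpha_0}\epsilon_{\alpha_0}$, i.e. on the length-zero part $\bold B_{\alpha_0}=\bold k\overset{\alpha_0}\otimes\bold k$ it is the canonical identification and it vanishes on all $\bold B_\alpha$ with $\mid\alpha\mid>0$. Counitarity says $(u\otimes 1_N)\circ\Delta_N=1_N$; decomposing $\Delta_N$ into its graded pieces, only $\Delta^{(0)}$ survives after applying $u\otimes 1$, and $(u\otimes 1)\Delta^{(0)}=\sum_{\alpha_0}p_{\alpha_0}$. Hence $\sum_{\alpha_0}p_{\alpha_0}=1_N$. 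To see that the $p_{\alpha_0}$ are orthogonal idempotents I would feed coassociativity into the length-$(0,0)$ component: $(\Delta_B\otimes 1)\Delta_N=(1\otimes\Delta_N)\Delta_N$, and project the target onto $(\bold B_{\alpha_0}\otimes\bold B_{\beta_0})\otimes N$. On the left, $\Delta_{\epsilon,\epsilon,\epsilon}$ sends the $\bold B_{\gamma}$-part of $B$ into a sum of $\bold B_{\gamma'}\otimes\bold B_{\gamma''}$ with $\gamma',\gamma''$ the two halves of $\gamma$ meeting at one index; the only contribution to $\bold B_{\alpha_0}\otimes\bold B_{\beta_0}$ (both length zero) comes from $\gamma=(\alpha_0)$ with $\alpha_0=\beta_0$, giving $p_{\alpha_0}$ if $\alpha_0=\beta_0$ and $0$ otherwise. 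On the right, $(1\otimes\Delta_N)\Delta_N$ projected the same way gives $p_{\beta_0}\circ p_{\alpha_0}$ (the inner $\Delta_N$ contributes its $\bold B_{\beta_0}$-part $p_{\beta_0}$, composed with the outer $\bold B_{\alpha_0}$-part $p_{\alpha_0}$). Comparing: $p_{\alpha_0}p_{\beta_0}=\delta_{\alpha_0,\beta_0}p_{\alpha_0}$. Together with $\sum p_{\alpha_0}=1_N$ this gives the direct sum decomposition; boundedness of $N$ makes the sum finite, so "complete" is automatic.

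Next, part (2). The relations $(1\otimes p_{\alpha_1})D_{\alpha_0,\alpha_1}=D_{\alpha_0,\alpha_1}=D_{\alpha_0,\alpha_1}p_{\alpha_0}$ come again from coassociativity, now read off the $(0,1)$- and $(1,0)$-graded components. Projecting $(\Delta_B\otimes 1)\Delta_N=(1\otimes\Delta_N)\Delta_N$ onto $(\bold B_{\alpha_0}\otimes\bold B_{\alpha_1})\otimes N$: on the left side, $\Delta_{\epsilon,\epsilon,\epsilon}$ applied to the $\bold B_{\alpha_0,\alpha_1}$-part of $B$ splits it (at the middle index, which coincides with both end indices via the augmentation) into $\bold B_{\alpha_0}\otimes\bold B_{\alpha_0,\alpha_1}$ and $\bold B_{\alpha_0,\alpha_1}\otimes\bold B_{\alpha_1}$; picking the piece landing in $\bold B_{\alpha_0}\otimes\bold B_{\alpha_1}$ forces using the $\bold B_{\alpha_0}$-part $p_{\alpha_0}$ composed afterwards—wait, more carefully, the left side yields $p_{\alpha_0}$-composition on the $D$-side giving $D_{\alpha_0,\alpha_1}p_{\alpha_0}$ in one grading assignment and $(1\otimes p_{\alpha_1})D_{\alpha_0,\alpha_1}$ in the other, while the right side yields $D_{\alpha_0,\alpha_1}$ itself (the outer factor contributing $p_{\alpha_0}$ or the inner $p_{\alpha_1}$, matched against the length-one piece from the other copy). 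So the two equalities. Finally, the factorization $D_{\alpha_0,\alpha_1,\alpha_2}=(1\otimes D_{\alpha_1,\alpha_2})D_{\alpha_0,\alpha_1}$ is the $(1,1)$-graded component of coassociativity: project onto $(\bold B_{\alpha_0,\alpha_1}\otimes\bold B_{\alpha_1,\alpha_2})\otimes N$. On the left, $\Delta_{\epsilon,\epsilon,\epsilon}$ applied to $\bold B_{\alpha_0,\alpha_1,\alpha_2}$ has exactly one term splitting at the middle index $\alpha_1$ into $\bold B_{\alpha_0,\alpha_1}\otimes\bold B_{\alpha_1,\alpha_2}$, with coefficient the component $D_{\alpha_0,\alpha_1,\alpha_2}$ of $\Delta^{(2)}$. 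On the right, $(1\otimes\Delta_N)\Delta_N$ projected the same way is the composite $(1\otimes D_{\alpha_1,\alpha_2})D_{\alpha_0,\alpha_1}$, because the outer $\Delta_N$ contributes its $\bold B_{\alpha_0,\alpha_1}$-part $D_{\alpha_0,\alpha_1}$ and the inner its $\bold B_{\alpha_1,\alpha_2}$-part $D_{\alpha_1,\alpha_2}$. Equating gives the claim.

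The only genuine bookkeeping is keeping the grading/summation-index conventions of the coproduct $\Delta_{\epsilon_1,\epsilon_2,\epsilon_3}$ straight—in particular which "shared" index of a sequence $\alpha$ becomes the augmentation slot after applying $\otimes_{\epsilon_2}\bold k$—and checking that the degree-shift factors $e^{-i}$ match up so that the maps $D_{\alpha_0,\dots,\alpha_k}$ as defined have the asserted degree; none of this is conceptually hard. I expect the main (minor) obstacle to be phrasing the componentwise extraction cleanly, i.e. formalizing "apply $\pi_\alpha\otimes\pi_\beta\otimes 1_N$ and compare," since $B$ is an infinite direct sum and one must note that on any fixed element only finitely many components are nonzero, which is exactly what boundedness of the comodule $N$ provides (this is where $(B-com)^{red,b}$ rather than all $B$-comodules is used). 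So the plan is: (i) derive $\sum p_{\alpha_0}=1$ from counitarity; (ii) derive orthogonality/idempotence and hence the decomposition from the $(0,0)$-component of coassociativity; (iii) derive the two "one-sided" relations on $D_{\alpha_0,\alpha_1}$ from the $(0,1)$- and $(1,0)$-components; (iv) derive the factorization from the $(1,1)$-component.
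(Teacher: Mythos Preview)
Your proposal is correct and follows essentially the same approach as the paper: both derive $\sum_{\alpha_0}p_{\alpha_0}=1_N$ from counitarity, and then extract all remaining identities by applying coassociativity $(\Delta_B\otimes 1)\Delta_N=(1\otimes\Delta_N)\Delta_N$ and comparing the $\bold B_{\alpha}\otimes\bold B_{\beta}\otimes N$-components for $(\mid\alpha\mid,\mid\beta\mid)=(0,0),(0,1),(1,0),(1,1)$.

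One small correction: you attribute the fact that ``on any fixed element only finitely many components are nonzero'' to boundedness of the comodule $N$. This is not where the finiteness comes from. The comodule map $\Delta_N$ lands in $B\otimes N$, and $B$ is by definition a \emph{direct sum} $\oplus_{\alpha}\bold B_{\alpha}e^{\mid\alpha\mid}$; hence $\Delta_N(x)$ automatically has only finitely many nonzero $\bold B_{\alpha}$-components for each $x$, with no hypothesis on $N$ needed. Boundedness is the stronger global statement that the set of $\alpha_0$ with $p_{\alpha_0}\neq 0$ is finite, which the paper uses only afterwards (to ensure the resulting DG complex $\{M^i\}$ lies in $K^b\Cal C_A$), not in the proof of this lemma.
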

\begin{proof}
Since the composite map $N \overset{\Delta}\to B\otimes N
\overset{\epsilon\otimes 1}\to N$ is the identity, we have 
$\sum_{\alpha_0}p_{\alpha_0}=id_N$.
For $x \in N$, we have
\begin{align*}
(\Delta_B\otimes 1)(\Delta_N^{(0)}(x))&=
\sum_{\alpha_0}(\Delta_B\otimes 1)
((1\overset{\alpha_0}\otimes 1)\otimes p_{\alpha}(x)) \\
&=\sum_{\alpha_0}(1\overset{\alpha_0}\otimes 1)\otimes
(1\overset{\alpha_0}\otimes 1)\otimes p_{\alpha_0}(x)
\end{align*}
and
\begin{align*}
(1\otimes \Delta_N^{(0)})(\Delta_N^{(0)}(x)) &=
\sum_{\alpha_0}(1\otimes \Delta_N^{(0)})
((1\overset{\alpha_0}\otimes 1)\otimes p_{\alpha_0}(x)) \\
& =\sum_{\alpha_0<\alpha_1}
(1\overset{\alpha_0}\otimes 1)\otimes
(1\overset{\alpha_1}\otimes 1)\otimes p_{\alpha_1}p_{\alpha_0}(x).
\end{align*}
Thus by the coassociativity, $p_{\alpha_0}$ is a complete system of
orthogonal projection.

We show the second statement. For $x \in N$, we have
\begin{align*}
(\Delta_B\otimes 1)(D_{\alpha_0,\alpha_1}(x))=&
(1\overset{\alpha_0}\otimes 1)\otimes D_{\alpha_0,\alpha_1}(x)
+D_{\alpha_0,\alpha_1}(x)\otimes (1\overset{\alpha_1}\otimes 1) \\
& \in (\bold k\overset{\alpha_0}\otimes \bold k) \otimes 
(\bold k\overset{\alpha_0}\otimes A\overset{\alpha_1}\otimes \bold k)
\otimes  N \\
& \oplus
(\bold k\overset{\alpha_0}\otimes A\overset{\alpha_1}\otimes \bold k)
\otimes (\bold k\overset{\alpha_1}\otimes \bold k) \otimes N
\end{align*}
and
\begin{align*}
(1\otimes\Delta^{(0)})(D_{\alpha_0,\alpha_1}(x))= &
\sum_{\gamma}
(1\otimes p_{\gamma})(D_{\alpha_0,\alpha_1}(x))\otimes 
(1\overset{\gamma}\otimes 1) \\
\in (\bold k\overset{\alpha_0}\otimes A\overset{\alpha_1}\otimes N)
\otimes (\bold k\overset{\gamma}\otimes \bold k) 
&=
(\bold k\overset{\alpha_0}\otimes A\overset{\alpha_1}\otimes \bold k)
\otimes (\bold k\overset{\gamma}\otimes \bold k) \otimes N
\end{align*}
\begin{align*}
(1\otimes\Delta^{(1)})((1\overset{\alpha_0}\otimes 1)
\otimes p_{\alpha_0}(x))= &
\sum_{\beta,\gamma}(1\overset{\alpha_0}\otimes 1)
\otimes D_{\beta,\gamma}(p_{\alpha_0}(x)) \\
&\in
(\bold k\overset{\alpha_0}\otimes \bold k)\otimes  
(\bold k\overset{\beta}\otimes A\overset{\gamma}\otimes \bold k)
\otimes N.
\end{align*}
By comparing the direct sum component, we have the lemma.

As for the equality (\ref{composite of connection}),
we consider the 
$$
(\bold k\overset{\alpha_0}\otimes A
\overset{\alpha_1}\otimes \bold k)\otimes 
(\bold k\overset{\alpha_1}\otimes A
\overset{\alpha_2}\otimes \bold k)\otimes N
$$
part of $(\Delta_B\otimes 1)\Delta=(1\otimes \Delta)\Delta$.
The component of $(\Delta_B\otimes 1)\Delta(x)$ is equal to
$D_{\alpha_0,\alpha_1,\alpha_2}(x)$.
On the other hand, this component of $(1\otimes \Delta)\Delta$
is equal to $(1\otimes D_{\alpha_1,\alpha_2})D_{\alpha_0,\alpha_1}$.
\end{proof}

Now we construct the correspondence $\psi$.
Let $M^{i}$ be the direct sum component of the complex $Ne^i$ 
associated to the projector $p_i$. We assume that $N$ is
a bounded $B$ comodule.
We make an object $(\{M^i\},\{d_{ji}\})$ of $K^b\Cal C_A$ from
the sequence of complex $M^i$.
By (2) of the above lemma, $D_{\alpha_0,\alpha_1}$
is regarded as a degree zero linear map 
$M^{\alpha_0}e^{-\alpha_0}\to 
(A\otimes M^{\alpha_1})e^{-\alpha_1+1}$.
Therefore $D_{\alpha_0,\alpha_1}$ defines an element
of $d_{\alpha_1,\alpha_0}\in\underline{Hom}^{1}_{\Cal C_A}
(M^{\alpha_0}e^{-\alpha_0},M^{\alpha_1}e^{-\alpha_1})$.
\begin{proposition}
The pair $(M^{\alpha_0},d_{\alpha_1,\alpha_0})$
defined as above is a bounded DG complex in $\Cal C_A$.
\end{proposition}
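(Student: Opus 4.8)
The plan is to deduce the defining relation of a DG complex (and the fact that each $M^i$ is genuinely a complex) purely from the hypothesis that $\Delta_N\colon N\to B\otimes N$ is a morphism of complexes, $\Delta_N\circ d_N=d_{B\otimes N}\circ\Delta_N$, by reading off the $\bold B_\gamma$-component of this identity for short index sequences $\gamma$. Boundedness is immediate: $N$ being a bounded $B$-comodule, $p_{\alpha_0}=0$ outside a finite set $S$, and by part (2) of the preceding lemma $D_{\alpha_0,\alpha_1}$ (hence $d_{\alpha_1,\alpha_0}$) vanishes unless $\alpha_0,\alpha_1\in S$; so $M^i=0$ for $i\notin S$ and every sum below is finite.

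First I would pin down the complexes $M^i$ and the ``diagonal'' differentials. Evaluating the $\bold B_{\alpha_0}$-component (bar degree zero) of the chain-map identity on $x\in M^\gamma$ shows $p_{\alpha_0}\circ d_N|_{M^\gamma}=0$ whenever $\gamma>\alpha_0$: on the right the only candidate contribution, from $d_{\bold B}$ applied to a $\bold B_{\gamma,\alpha_0}$-component, would require $\gamma<\alpha_0$. Thus $d_N$ is ``upper triangular'' for the decomposition $N=\oplus_i M^i$ furnished by the projectors, so the diagonal block $d_{M^i}:=p_i\circ d_N\circ\iota_i$ squares to zero (expand $p_i d_N^2\iota_i=0$ and drop the off-diagonal terms, which vanish by triangularity), making $M^i$ an object of $\Cal C_A$. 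The same component, evaluated on $x\in M^i$ with $j=\alpha_0>i$, identifies the off-diagonal block with the augmentation of the length-one comodule component: $p_j\circ d_N|_{M^i}=\pm(\epsilon\otimes 1)\circ D_{i,j}$. Finally, by part (2) of the preceding lemma $D_{\alpha_0,\alpha_1}$ carries $M^{\alpha_0}e^{-\alpha_0}$ into $\bold B_{\alpha_0,\alpha_1}\otimes M^{\alpha_1}e^{-\alpha_1}$, so $d_{\alpha_1,\alpha_0}^\#\in\underline{Hom}_{\Cal C_A}^1(M^{\alpha_0}e^{-\alpha_0},M^{\alpha_1}e^{-\alpha_1})$ is well defined.

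The heart is the $\bold B_{\alpha_0,\alpha_1}$-component of the chain-map identity, evaluated on $x\in M^{\alpha_0}$, for each $\alpha_0<\alpha_1$. On the left, $D_{\alpha_0,\alpha_1}=D_{\alpha_0,\alpha_1}\circ p_{\alpha_0}$ kills all but the diagonal block of $d_N$, leaving $D_{\alpha_0,\alpha_1}\circ d_{M^{\alpha_0}}$. On the right, decompose $d_{B\otimes N}=(\delta+d_{\bold B})\otimes 1\pm 1\otimes d_N$, with $\delta$ the inner and $d_{\bold B}$ the outer differential of $B=B(\epsilon\mid A\mid\epsilon)$: the $\delta\otimes 1$ term gives $(d_A\otimes 1)D_{\alpha_0,\alpha_1}$ (the inner differential of $\bold B_{\alpha_0,\alpha_1}\cong A$ is $d_A$); the $1\otimes d_N$ term gives $(1\otimes d_{M^{\alpha_1}})D_{\alpha_0,\alpha_1}$ plus the tail $\sum_{\gamma>\alpha_1}(1\otimes p_\gamma d_N|_{M^{\alpha_1}})D_{\alpha_0,\alpha_1}$; and $d_{\bold B}\otimes 1$ feeds into the $\bold B_{\alpha_0,\alpha_1}$-slot exactly the $\bold B_{\alpha_0,\beta,\alpha_1}$-terms ($\alpha_0<\beta<\alpha_1$), each equal via (\ref{composite of connection}) to $\pm(\mu\otimes 1)(1\otimes D_{\beta,\alpha_1})D_{\alpha_0,\beta}$, together with the $\bold B_{\alpha_0,\alpha_1,\gamma}$-terms ($\gamma>\alpha_1$) contributed by an end augmentation, each equal to $\pm(1\otimes(\epsilon\otimes 1)D_{\alpha_1,\gamma})D_{\alpha_0,\alpha_1}=\pm(1\otimes p_\gamma d_N|_{M^{\alpha_1}})D_{\alpha_0,\alpha_1}$ by the identification of off-diagonal blocks above. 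The two tails cancel; $(\mu\otimes 1)\circ(1\otimes D_{\beta,\alpha_1})\circ D_{\alpha_0,\beta}$ is precisely the composition $d_{\alpha_1,\beta}^\#\circ d_{\beta,\alpha_0}^\#$ in $K\Cal C_A$; and the three surviving terms $(d_A\otimes 1)D_{\alpha_0,\alpha_1}$, $(1\otimes d_{M^{\alpha_1}})D_{\alpha_0,\alpha_1}$, $D_{\alpha_0,\alpha_1}d_{M^{\alpha_0}}$ assemble (once the signs are arranged) into $\partial(d_{\alpha_1,\alpha_0}^\#)$. Hence the identity becomes $\partial(d_{\alpha_1,\alpha_0}^\#)+\sum_{\alpha_0<\beta<\alpha_1}d_{\alpha_1,\beta}^\#\circ d_{\beta,\alpha_0}^\#=0$, i.e.\ (\ref{condition for DG complex twist}).

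The step I expect to be the real obstacle is the sign bookkeeping: one must verify that the Koszul signs from the twists $t_{-i,-j}$ and from the tensor conventions of Section~\ref{convention}, the signs $(-1)^{n-i}$ in $d_{\bold B}$, the sign in $d_{B\otimes N}$, and the sign of the quadratic term in (\ref{condition for DG complex twist}) conspire so that the two tails truly cancel and the three remaining terms truly form $\partial(d_{\alpha_1,\alpha_0}^\#)$. This is the same computation already carried out --- in the opposite direction and for all bar-lengths --- in the proof that the comodule structure $\Delta$ attached to a DG complex is a morphism of complexes; since here only bar-lengths at most two occur, I would set up the signs exactly as there and specialize.
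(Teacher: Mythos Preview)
Your approach is essentially the paper's: both extract the DG-complex relation from the bar-length-one component of the chain-map identity $\Delta\circ d_N=d_{B\otimes N}\circ\Delta$ restricted to $M^{\alpha_0}$. The paper's one-sentence proof reads off the $(\bold k\overset{\alpha_0}\otimes A\overset{\alpha_2}\otimes\bold k)\otimes M^{\alpha_2}$-component directly, which is exactly relation~(\ref{another expression for DG condition}); your version projects only to $\bold B_{\alpha_0,\alpha_1}\otimes N$ and must then cancel the tails in $\bold B_{\alpha_0,\alpha_1}\otimes M^{\gamma}$ for $\gamma>\alpha_1$, but this comes to the same thing and your cancellation argument (matching the off-diagonal blocks of $d_N$ with $(\epsilon\otimes 1)D_{i,j}$ via the bar-length-zero component) is correct. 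If you adopt the paper's sharper projection to $\otimes M^{\alpha_2}$ rather than $\otimes N$, the tails never appear and the sign bookkeeping you flag as the main obstacle becomes shorter; either way your added verifications of boundedness and of $d_{M^i}^2=0$ via upper-triangularity fill in points the paper leaves implicit.
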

\begin{proof}
It is enough to show the relation
(\ref{another expression for DG condition}).
It is obtained by considering 
the
$
(\bold k\overset{\alpha_0}\otimes A 
\overset{\alpha_2}\otimes \bold k)
\otimes M^{\alpha_2}
$-component of the
equality $\Delta(dx)=d(\Delta(x))$ restricted
to $M^{\alpha_0}$.
\end{proof}

\subsection{The functor $\psi:(B-com)^{red,b}\to K^b\Cal C_{A}$ 
on Morphisms}
\subsubsection{$B$-comodules and morphisms in $K\Cal C_A$}

In this section, we construct the functor
$\psi:(B-com)^{red,b}\to K^b\Cal C_{A}$
for morphisms in Theorem \ref{first main theorem}.
In this subsection, we set $B_{red}=B_{red}(A,\epsilon)$.
\begin{definition}
\begin{enumerate}
\item
Let $p$ be the projector
$B_{red}\to Ie$ and $p_{n,\beta}$ be the homomorphism 
$$p_{n,\beta}:
B_{red}^{\otimes n}\otimes Ne^{-n}
\to I \otimes\cdots 
\otimes I \otimes N^{\beta}e^{-\beta}
$$
defined by
$p_{n,\beta}=p\otimes \cdots \otimes
p\otimes p_{\beta}$.
\item
Let $M,N$ be objects in $(B-com)^{red,b}$,
$\psi(M)=\{M^i\}$, $\psi(N)=\{N^i\}$
the corresponding objects in $K^b\Cal C_{A}$
and $f=\sum_nf^{(n)}$ 
an element in 
$$\underline{Hom}_{B_{red}-com}^i
(M_{red},N_{red})=
\oplus_n
\underline{Hom}_{KVect_{\bold k}}^i(M,
B_{red}^{\otimes n}\otimes Ne^{-n})
$$
For integers $\alpha<\beta$,$n\geq 0$ 
we define a homomorphism
\begin{align*}
\Psi(f)_{\alpha, \beta}^{(n)}\in
\underline{Hom}_{KVec_{\bold k}}^i
(M^\alpha e^{-\alpha},A\otimes N^\beta e^{-\beta}) 
\end{align*}
by the composite
\begin{align*}
M^{\alpha_0}e^{-\alpha_0}
\to M 
&\overset{f^{(n)}}\longrightarrow
B^{\otimes n}_{red}\otimes 
N e^{-n}\\
&
\overset{p_{n,\beta}}
\longrightarrow
I
\otimes\cdots 
\otimes I
\otimes 
N^{\beta}e^{-\beta} \\
&\overset{(\text{product})\otimes 1}\longrightarrow
A
\otimes 
N^{\beta}e^{-\beta}.
\end{align*}
\item
Let $f$ an element in $\underline{Hom}_{B_{red}-com}^i
(M_{red},N_{red})$.
We define an element $\psi(f)$ of 
$\underline{Hom}_{K\Cal C_A}^i(\psi(M),\psi(N))$ by
$$
\psi(f)_{\alpha,\beta}=
\sum_{n\geq 0}
\Psi(f)_{\alpha,\beta}^{(n)}.
$$
\end{enumerate}
\end{definition}
\begin{proposition}
\begin{enumerate}
\item
Let $M,N$ be objects $(B-com)^{red,b}$.
Then the map
$$
\psi:\underline{Hom}_{B_{red}-com}^{\bullet}(M_{red},N_{red})\to 
\underline{Hom}_{K\Cal C_{A}}^{\bullet}(\psi(M),\psi(N))
$$ 
is a homomorphism of complex and quasi-isomorphism.
\item
The above map is compatible with
composite, that is, the following diagram commutes:

$$
\begin{matrix}
\underline{Hom}_{B_{red}-com}(M_{red},N_{red})
\otimes\underline{Hom}_{B_{red}-com}(L_{red},M_{red})&\to&
\underline{Hom}_{B_{red}-com}(L_{red},N_{red}) \\
\psi\otimes \psi \downarrow & & \downarrow \psi \\
\underline{Hom}_{K\Cal C_A}(\psi(M),\psi(N))
\otimes\underline{Hom}_{K\Cal C_A}(\psi(L),\psi(M))&\to&
\underline{Hom}_{K\Cal C_A}(\psi(L),\psi(N))  
\end{matrix}
$$
\end{enumerate}
\end{proposition}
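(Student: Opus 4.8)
The plan is to verify the two assertions separately, exploiting that $\psi$ is built explicitly by summing the multiplication/projection operators $\Psi(f)^{(n)}_{\alpha,\beta}$. For part (1), I would first check that $\psi$ is a chain map by comparing the Hochschild-type differential $d_H$ on $\underline{Hom}_{B_{red}-com}^{\bullet}(M_{red},N_{red})$ (defined in \eqref{outer hochschild}) with the DG-complex differential $D$ on $\underline{Hom}_{K\Cal C_A}^{\bullet}(\psi(M),\psi(N))$. The key observation is that both differentials decompose into (a) the inner differential, coming from $\partial$ on $\underline{Hom}_{KVec_{\bold k}}$; (b) a term that applies $\Delta_B$ (respectively contracts along the bar coalgebra $B_{red}$) in an internal slot, which, after composing with the multiplication $I^{\otimes k}\to A$, produces exactly the internal multiplications $x_i x_{i+1}$ that appear in the reduced outer differential \eqref{reduced outer}; and (c) the boundary terms that apply $\Delta_M$ or $\Delta_N$, which under $\psi$ turn into the $d_{\alpha,\alpha'}^\#$-compositions on the $M$-side and $N$-side, i.e. the $\sum d_{r,r'}^\#\circ\varphi^\#_{r',q}$ and $\sum\varphi^\#_{r,q'}\circ d^\#_{q',q}$ terms in the definition of $D$. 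The sign bookkeeping here mirrors the sign computation already carried out in the proof that $(B-com)$ is a DG category; the point is simply that the projectors $p_{n,\beta}$ intertwine the two structures, since $\Delta_N = \sum D_\beta$ (from the correspondence $\varphi$) and the comultiplication of $B_{red}$ restricted through $p\otimes\cdots\otimes p$ is exactly the deconcatenation that feeds these slots.

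To show $\psi$ is a quasi-isomorphism I would argue by a spectral-sequence / filtration comparison analogous to the one used in Theorem \ref{comparison of simp and chen}. Filter $\underline{Hom}_{B_{red}-com}^{\bullet}(M_{red},N_{red})$ by the bar degree $n$ (the number of tensor factors of $B_{red}$), and filter $\underline{Hom}_{K\Cal C_A}^{\bullet}(\psi(M),\psi(N))$ correspondingly by the number of internal $A$-factors used in the $\Psi^{(n)}$; $\psi$ respects these filtrations. On the associated graded, the boundary terms involving the internal structure of $M$ and $N$ survive while the differentials within $B_{red}$ (and the multiplication maps collapsing them) either vanish or become the contractible pieces already analyzed: on $E_1$ one is left, in each bidegree, with $\underline{Hom}_{KVec_{\bold k}}(M^\alpha, A^{\otimes n}\otimes N^\beta)$ mapping to $\underline{Hom}_{KVec_{\bold k}}(M^\alpha, A\otimes N^\beta)$ via iterated multiplication, which is the same statement, tensored with $\Hom$'s, as the contractibility result Proposition \ref{quasi-iso for typical basic} (the algebra $\bold k\oplus\bold k x$ with $x^2=0$ is exactly the model for the reduced bar factor). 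Hence $\psi$ induces an isomorphism on $E_1$ after passing to the relevant associated graded, and therefore on cohomology; since both filtrations are (locally) bounded — $N$ is a bounded comodule, so only finitely many $\alpha,\beta$ occur — the spectral sequences converge and the comparison is legitimate.

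For part (2), compatibility with composition, the statement is essentially formal once the explicit formula for $\psi$ is in hand. The composite in $(B-com)^{red,b}$ is $\mu_{ij}(f\otimes g) = (1_B^{\otimes j}\otimes f)\circ g$ (Definition \ref{def of B comod as DG cat}(3)), so applying a projector $p_{i+j,\beta}$ and the total multiplication $I^{\otimes(i+j)}\to A$ factors, by associativity of $\mu_A$, through first multiplying the last $i$ slots and the first $j$ slots separately. Interleaving a resolution of identity $\sum_\gamma p_\gamma = \mathrm{id}$ (Lemma, part (1), the orthogonal projectors) on the middle comodule $M$ shows that this equals $\sum_\gamma \psi(f)_{\gamma,\beta}^\# \circ \psi(g)_{\alpha,\gamma}^\#$, which is precisely $(\psi(f)\circ\psi(g))_{\alpha,\beta}^\#$ by the composition rule for $K\Cal C_A$. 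The signs match because both composition rules are normalized via the same $t_{-r,-q}$-twists. I expect the main obstacle to be the first part — not the chain-map check, which is a (lengthy but mechanical) sign computation parallel to the one already done for $(B-com)$, but rather making the filtration/spectral-sequence comparison for the quasi-isomorphism completely rigorous: one must be careful that the filtration on the $K\Cal C_A$-side is exhaustive and bounded below in each fixed total degree, and that the identification of the $E_1$-page with the model complex $K_{m,l}\otimes(\text{stuff})$ is uniform in $\alpha,\beta$ so that Proposition \ref{quasi-iso for typical basic} applies termwise before taking the (finite) direct sum over the support of $M$ and $N$.
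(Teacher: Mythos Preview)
Your treatment of the chain-map property and of compatibility with composition matches the paper's: the paper records exactly the two identities to be checked (expressing $\psi(d_H f+d_{B^{\otimes n}}f)_{pq}$ and $\psi(f\circ g)_{pq}$ in terms of $\psi(f)$, $\psi(g)$ and the structure maps $\delta_{M},\delta_{N}$) and says they follow from the definition of $\psi$. Your account of how the three kinds of terms in $d_H$ correspond to the three kinds of terms in the DG-complex differential $D$ is a correct elaboration of this.

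The gap is in your quasi-isomorphism argument. You want to filter both sides by bar degree $n$, but the target $\underline{Hom}_{K\Cal C_A}^{\bullet}(\psi(M),\psi(N))$ carries no such filtration: its $(\alpha,\beta)$-component is $\underline{Hom}_{KVec_{\bold k}}(M^\alpha e^{-\alpha},A\otimes N^\beta e^{-\beta})$ with a \emph{single} factor of $A$, and every $\Psi(f)^{(n)}_{\alpha,\beta}$, for all $n$, lands in this one space after the multiplication $I^{\otimes n}\to A$ has been applied. The phrase ``number of internal $A$-factors used in $\Psi^{(n)}$'' describes a decomposition of the map $\psi$, not a grading on the target complex, so $\psi$ is not a filtered map in the sense your spectral-sequence comparison needs. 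Your proposed identification of the $E_1$-page with the situation of Proposition~\ref{quasi-iso for typical basic} is likewise off: that proposition concerns the combinatorics of the simplicial indexing over the toy algebra $\bold k\oplus\bold k x$ with $x^2=0$, whereas the associated graded of the source by bar degree still carries full copies of $B_{red}^{\otimes n}$ (not $A^{\otimes n}$), and there is no general statement about $I^{\otimes n}\to A$ to extract from it.

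The paper instead runs a d\'evissage on the support of $N$ and then of $M$. Setting $Fil^{\alpha}N=\bigoplus_{i\geq\alpha}N^ie^{-i}$, each quotient $Fil^{\alpha}N/Fil^{\alpha+1}N$ has trivial $B_{red}$-coaction, and the long exact sequences attached to $0\to Fil^{\alpha+1}\to Fil^{\alpha}\to Fil^{\alpha}/Fil^{\alpha+1}\to 0$ (finitely many steps, since $N$ is bounded) reduce the claim first to trivial $N$, then likewise to trivial $M$, and finally to $M=N=\bold Q$ concentrated in a single index, where both Hom-complexes are computed directly. The point is that this filtration comes from a filtration of the \emph{objects} $M,N$ and therefore exists compatibly on both $\underline{Hom}_{B_{red}-com}$ and $\underline{Hom}_{K\Cal C_A}$, which is exactly what your bar-degree filtration lacks.
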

\begin{proof}
Let $M,N$ be objects in $(B-com)^{red,b}$ and set 
$\psi(M)=\{M_i,\partial_{M,ij}\}$ and
$\psi(N)=\{N_i,\partial_{M,ij}\}$.
Then we have $\partial_{M,ij}=d_{M,ij}+\delta_{M,ij}$,
where 
\begin{align*}
&d_{M,ij}=p_j\circ d\circ \iota_i:M^i\to M \overset{d}\to M \to M^j \\
&\delta_{M,ij}=(p\otimes p_j)\circ\Delta_{M_{red}}\circ \iota_i:
M^i\to M \overset{\Delta_{M_{red}}}\to B_{red} \otimes M \to I\otimes M^j.
\end{align*}
Let $d_H$ be the outer differential (\ref{outer hochschild})
for $\underline{Hom}_{B_{red}-com}(M,N)$ and
$$
d_{B^{\otimes n}}(f)=\sum_{i=1}^n
(1_B^{\otimes(i-1)}\otimes d_{B,red}\otimes 1_B^{\otimes(n-i)}
\otimes N)\circ f\in 
\underline{Hom}(M,B^{\otimes n}_{red}\otimes N),
$$
where $d_{B,red}$ is defined in (\ref{reduced outer}).
To prove the compatibility of differential for the map $\psi$,
it is enough to show the following equality.
\begin{align*}
&\psi(d_H(f)+d_{B^{\otimes n}}(f))_{pq} \\
=&\sum_{p<k<q}
(\mu\otimes 1_{N^q})
\{(1_A\otimes \delta_{N,kq})\psi(f)_{pk}
-(-1)^i(1_A\otimes \psi(f)_{kq})\delta_{M,pk}\}.
\end{align*}
We can check this equality by the definition of $\psi$.
As for the compatibility of composite, it is enough to check the equality:
$$
\psi(f\circ g)_{pq}=\sum_{p<k<q}(\mu\otimes 1_{N^q})(1_A\otimes \psi(f)_{kq})
\psi(g)_{pk}.
$$
This equality also follows from the definition of $\psi$.

To show that the homomorphism $\psi$ is a quasi-isomorphism,
we introduce a finite filtration $Fil^{\bullet}$ on $N$
by $Fil^{\alpha}N=\oplus_{i\geq \alpha}N^ie^{-i}$.
Since the quotient $Fil^{\alpha}N/Fil^{\alpha+1}N$ has a trivial $B$-coaction,
we may assume that the action on $N$ is trivial
by considering the long exact sequence for the short exact sequence
$$
0\to 
Fil^{\alpha+1}N\to 
Fil^{\alpha}N\to Fil^{\alpha}N/Fil^{\alpha+1}N
\to 0.
$$
By similar argument, we may assume
 that
$M$ has also trivial $B$-coaction. Thus the proposition
is reduce to the case where $M=M^{\alpha}e^{-\alpha}=N=
N^{\beta}e^{-\beta}=\bold Q$.
In this case,
we can check by direct calculation.
\end{proof}

\section{$H^0(B(\epsilon\mid A \mid \epsilon))$-comodule
for Connected DGA}

A DGA $A^{\bullet}$ (resp. DG coalgebra $B^{\bullet}$) 
is said to be connected if 
$H^{i}(A^{\bullet})=0$ for $i<0$
and $H^{0}(A^{\bullet})=0$ (resp.
$H^{i}(B^{\bullet})=0$ for $i<0$
and $H^{0}(B^{\bullet})=0$).
Let $A^{\bullet}$ be a connected DGA and 
$\epsilon:A^{\bullet} \to \bold k$ be an augmentation of $A^{\bullet}$.
Then by the bar spectral sequence, 
the DG coalgebra $B(\epsilon \mid A \mid \epsilon)$ is connected.

\begin{lemma}
\label{strictly connectedness}
Let $A$ be a connected DGA. Then there exists a subalgebra $\overline{A}$
of $A$ such that the natural inclusion $\overline{A}\to A$ is a
quasi-isomorphism and
\begin{align*}
&\text{(p)}\quad A^{-i}=0 \text{ for }i<0,
\text{ (positive condition) and } \\
&\text{(r)}\quad \overline{A}^0=\bold k
\text{ (rigid condition)}.
\end{align*}
\end{lemma}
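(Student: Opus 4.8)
The plan is to write down $\overline{A}$ by hand. Keep $A$ unchanged in degrees $\geq 2$, set $\overline{A}^{i}=0$ for $i<0$, set $\overline{A}^{0}=\bold k\cdot 1$, and let $\overline{A}^{1}=V$ be a subspace of $A^{1}$ still to be chosen. Since $d(1)=0$ in any unital DGA (from $d(1)=2d(1)$) and $\bold k\cdot 1$ acts on every graded piece by scalars, for \emph{any} subspace $V\subseteq A^{1}$ the graded vector space $\overline{A}=\bold k\cdot 1\oplus V\oplus(\oplus_{i\geq 2}A^{i})$ is automatically closed under the product (products landing in degree $\leq 1$ are scalar multiples of elements already present; products landing in degree $2$ land in $A^{2}=\overline{A}^{2}$) and under $d$ (one only has to check $d(V)\subseteq A^{2}$, which is automatic), so $\overline{A}$ is a sub-DGA of $A$ containing the unit, and conditions (p) and (r) hold by construction. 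The entire content of the lemma is therefore to choose $V$ so that the inclusion $\iota:\overline{A}\hookrightarrow A$ is a quasi-isomorphism.

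Next I would determine which $V$ works, degree by degree. For $i\geq 3$ the complexes $\overline{A}$ and $A$ agree in degrees $i-1,i,i+1$, so $\iota$ is an isomorphism on $H^{i}$; for $i<0$ both sides vanish, the target by connectedness. On $H^{0}$ one gets the map $\bold k\cdot 1=H^{0}(\overline{A})\to H^{0}(A)$ sending $[1]$ to $[1]$, which is an isomorphism precisely because $A$ is connected, i.e. $H^{0}(A)$ is one-dimensional and spanned by the class of the unit (equivalently $1\notin d(A^{-1})$). On $H^{2}$ the map is the quotient $\Ker(d:A^{2}\to A^{3})/d(V)\to\Ker(d:A^{2}\to A^{3})/d(A^{1})$, which is an isomorphism iff $d(V)=d(A^{1})$. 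On $H^{1}$ the map is $V\cap\Ker(d:A^{1}\to A^{2})\to\Ker(d:A^{1}\to A^{2})/d(A^{0})$, which is an isomorphism iff $V\cap\Ker(d:A^{1}\to A^{2})$ is a complement of $d(A^{0})$ inside $\Ker(d:A^{1}\to A^{2})$. Both requirements can be met at once by choosing two splittings: fix $W\subseteq A^{1}$ with $A^{1}=\Ker(d:A^{1}\to A^{2})\oplus W$, fix $C\subseteq\Ker(d:A^{1}\to A^{2})$ with $\Ker(d:A^{1}\to A^{2})=d(A^{0})\oplus C$, and put $V=C\oplus W$. Then $d|_{W}$ is injective with image $d(A^{1})$, so $d(V)=d(W)=d(A^{1})$; and an easy check (for $c\in C$, $w\in W$ one has $d(c+w)=dw$) gives $V\cap\Ker(d:A^{1}\to A^{2})=C$, the chosen complement of $d(A^{0})$.

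Assembling these observations proves the lemma: $\overline{A}$ is a sub-DGA satisfying (p) and (r), and $\iota$ induces an isomorphism on $H^{i}$ for every $i$, hence is a quasi-isomorphism. The only step that needs genuine care is the interaction of degrees $0$ and $1$: cutting $A^{0}$ down to $\bold k\cdot 1$ destroys the coboundaries $d(A^{0})\subseteq A^{1}$, so $V$ must be chosen to contain exactly one cocycle representative for each class of $H^{1}(A)$ (the role of $C$) while still retaining enough of $A^{1}$ to produce all of $d(A^{1})$ (the role of $W$). This is also the point where the hypothesis $H^{0}(A)=\bold k$ is essential, since if $H^{0}(A)$ were $0$ the rigid condition (r) would already force $H^{0}(\overline{A})\neq H^{0}(A)$.
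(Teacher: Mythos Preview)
Your proof is correct and follows the same direct-construction strategy as the paper: set $\overline{A}^{i}=0$ for $i<0$, $\overline{A}^{0}=\bold k\cdot 1$, $\overline{A}^{i}=A^{i}$ for $i\geq 2$, and choose $\overline{A}^{1}$ suitably. You are in fact more careful than the paper, which takes $\overline{A}^{1}=L$ to be only a complement of $B^{1}=d(A^{0})$ inside $Z^{1}=\Ker(d:A^{1}\to A^{2})$; your extra summand $W$ (a complement of $Z^{1}$ in $A^{1}$) is exactly what is needed to guarantee $d(\overline{A}^{1})=d(A^{1})$, without which the inclusion would fail to be a quasi-isomorphism at $H^{2}$.
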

\begin{proof}
Let $B^1$ and $Z^1$ be the image and the kernel of $d$ in $A^1$.
Then we have $B^1 \subset Z^1 \subset A^1$. We choose a splitting
$Z^1=B^1\oplus L$. We set $\overline{A}^0=\bold k, \overline{A}^1=L$
and $\overline{A}^i=A^i$ for $i>1$. Then $\overline{A}$ is a sub DGA
of $A$ which is quasi-isomorphic to $A$ and satisfies the conditions
of the lemma.
\end{proof}
\begin{definition}
\begin{enumerate}
\item
We define $K^0\Cal C_A$ (resp. $K^{\leq 0}\Cal C_A$,$K^{\geq 0}\Cal C_A$) 
as the full subcategory of $K^b\Cal C_A$
of objects $M=(M^i,d_{i,j})$ where $M^i$ is of the form
$M^{-i,i}e^i$ 
(resp. $\oplus_{i+j\leq 0}M^{j,i}e^{-j}$,
$\oplus_{i+j\geq 0}M^{j,i}e^{-j}$)
for $\bold k$-vector spaces $M^{-i,i}$.
\item
We define an additive category $H^0K^0\Cal C_A$ as follows.
The class of objects of $H^0K^0\Cal C_A$ is 
the same as in $K^0\Cal C_A$.
For $a,b\in K^0\Cal C_A$, we define the morphism
$$
Hom_{H^0K^0\Cal C_A}(a,b)=H^0(\underline{Hom}_{K^b\Cal C_A}(a,b)).
$$
\end{enumerate}
\end{definition}
\begin{definition}
Let $A^{\bullet}$ be a differential graded algebra.
\begin{enumerate}
\item
A pair $(\Cal M,\nabla)$ of free $A^0$-module $\Cal M$
and a $\bold k$-linear map $\nabla:\Cal M \to \Cal M\otimes_{A^0}A^1$
is called an $\Cal A$ connection if $\nabla(am)=a\nabla(m)+da\cdot m$.
An $\Cal A$ connection is said to be integrable if $\nabla\circ\nabla=0$.
\item
An $A^{\bullet}$ connection $(M,\nabla)$ is said to be trivial
if it is generated by horizontal sections.
\item
An $A^{\bullet}$ connection $(\Cal M,\nabla)$
is called nilpotent if there exists a 
finite filtration by connections $F^p\Cal M$ such that
$Gr_F^p(\Cal M)$ is a trivial connection.
The category of integrable nilpotent connections is denoted
as $(INC_A)$. Morphism is a $A^0$ homomorphism compatible with
connections.
\item
A homomorphism $F\in Hom_{INC_A}(\Cal M,\Cal N)$ is homotopy equivalent if 
there is a map $h:\Cal M\to \Cal N\otimes_{A^0} A^{-1}$ of 
$A^0$-homomorphism
such that $F=\nabla\circ h+h\circ\nabla$. (See the following diagram.)
$$
\begin{matrix}
\Cal M & \overset{h}\to & A^{-1}\otimes_{A^0}\Cal N \\
\nabla_M\downarrow & & \downarrow \nabla_N \\
A^1\otimes_{A^0}\Cal M & \overset{h}\to & A^0\otimes_{A^0}\Cal N \\
\end{matrix}
$$
By localizing the homomorphisms by homotopy equivalent, we have
a category $(HINC_A)$ of homotopy integrable nilpotent connections.
By the definition, if the condition (p) 
is satisfied, then $(INC_{A})$ and $(HINC_{A})$
are equivalent.
\end{enumerate}
\end{definition}
\begin{proposition}
\label{DGcat to connection}
Let $A^{\bullet}$ be a connected DGA
with an augmentation $\epsilon$.
The category $(HINC_A)$ of homotopy equivalence class of 
integrable nilpotent $A^{\bullet}$ connections and
$H^0K^0\Cal C_A$ are equivalent.
\end{proposition}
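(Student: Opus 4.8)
The plan is to make both categories completely explicit and to observe that, once the definitions are unravelled, they carry literally the same data, so that the desired equivalence is (a suitable splitting of) the identity.

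First I would unwind an object $M=(M^i,d_{ij})$ of $K^0\Cal C_A$. By definition $M^i=M^{-i,i}e^i$ for a $\bold k$-vector space $M^{-i,i}$, only finitely many nonzero, so $s(M)=\oplus_i M^ie^{-i}$ is a $\bold Z$-graded vector space $\Cal M_0=\oplus_j M^{-j,j}$ of finite length concentrated in cohomological degree $0$. For $i>j$ the morphism $d_{ij}\in\underline{Hom}^{j-i+1}_{\Cal C_A}(M^j,M^i)=\underline{Hom}^{j-i+1}_{KVec_{\bold k}}(M^{-j,j}e^j,A^{\bullet}\otimes M^{-i,i}e^i)$ is, by a degree count, nothing but a linear map $d_{ij}\colon M^{-j,j}\to A^1\otimes M^{-i,i}$. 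Put $\Cal M=\Cal M_0\otimes_{\bold k}A^0$ and $\nabla=d+\omega$ with $\omega=\sum_{i>j}d_{ij}$, extended $A^0$-linearly (the twisted forms $d^\#_{ij}=d_{ij}\otimes t_{-i,-j}$ absorbing the signs). The DG-complex identity (\ref{condition for DG complex}), read most transparently via (\ref{condition for DG complex twist}) and (\ref{another expression for DG condition}), is exactly the integrability $\nabla\circ\nabla=0$; and the filtration $F^p\Cal M=(\oplus_{j\ge p}M^{-j,j})\otimes_{\bold k}A^0$ is finite, $\nabla$-stable, and has trivial graded quotients (on $Gr^p_F$ the $\omega$-part vanishes, leaving the flat connection $d$). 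Thus $(\Cal M,\nabla)\in(INC_A)$, and this defines a map $\Phi$ on objects.

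Next, morphisms. A degree-$0$ element $f=(f_{r,q})$ of $\underline{Hom}^0_{K\Cal C_A}(M,N)$ has components $f_{r,q}\in\underline{Hom}^{q-r}_{\Cal C_A}(M^q,N^r)$, which for $K^0$-objects are merely $A^0$-valued maps $M^{-q,q}\to A^0\otimes N^{-r,r}$; assembled, they are an arbitrary $A^0$-module homomorphism $F\colon\Cal M\to\Cal N$. Writing out $D(f)=0$ using the $D$-operator and (\ref{differential formula}) — the internal differentials of the $M^i$ being zero — one gets precisely $\nabla_N\circ F=F\circ\nabla_M$, i.e. $F\in\Hom_{INC_A}(\Phi M,\Phi N)$; likewise a degree $-1$ element $g=(g_{r,q})$ has components $M^{-q,q}\to A^{-1}\otimes N^{-r,r}$ assembling to $h\colon\Cal M\to\Cal N\otimes_{A^0}A^{-1}$, and $D(g)$ becomes $\nabla_N\circ h+h\circ\nabla_M$. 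Hence the induced map from the degree-$0$ cocycles of $\underline{Hom}_{K^b\Cal C_A}(M,N)$ to $\Hom_{INC_A}(\Phi M,\Phi N)$ is a bijection carrying coboundaries onto null-homotopic homomorphisms, so on quotients it gives a bijection $\Hom_{H^0K^0\Cal C_A}(M,N)\overset{\sim}\to\Hom_{HINC_A}(\Phi M,\Phi N)$. Compatibility with composition is the matrix-multiplication rule $(\psi\circ\varphi)^\#_{p,q}=\sum_r\psi^\#_{p,r}\circ\varphi^\#_{r,q}$. Thus $\Phi\colon H^0K^0\Cal C_A\to(HINC_A)$ is a fully faithful functor, and it only remains to see it is essentially surjective. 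Given a nilpotent integrable $(\Cal M,\nabla)$ with finite filtration $F^\bullet$ having trivial graded quotients, each $Gr^p_F\Cal M$ is a free $A^0$-module generated by its horizontal sections, hence $\cong V_p\otimes_{\bold k}A^0$; since the $Gr^p_F\Cal M$ are projective and the filtration is finite it splits as $A^0$-modules, so $\Cal M\cong(\oplus_p V_p)\otimes_{\bold k}A^0$, and in such an adapted basis $\nabla=d+\omega$ with $\omega$ strictly triangular with entries in $A^1$; reading the blocks of $\omega$ as maps $d_{ij}\colon V_j\to A^1\otimes V_i$ $(i>j)$ exhibits an object of $K^0\Cal C_A$ whose $\Phi$-image is isomorphic to $(\Cal M,\nabla)$. (Connectedness of $A$ is not itself invoked here; it is the running hypothesis of the section and, via Lemma \ref{strictly connectedness}, is what makes this $(HINC_A)$ the category relevant to the bar complex.)

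The part that is not routine is the sign bookkeeping in translating the $D$-operator, the shift twists $t_{-i,-j}$, and the Koszul signs of $A^{\bullet}\otimes M^i$ into the three connection identities $\nabla\circ\nabla=0$, $\nabla_N\circ F=F\circ\nabla_M$, and $F=\nabla\circ h+h\circ\nabla$; the twisted condition (\ref{condition for DG complex twist}) is precisely designed to make this painless, and I would rely on it throughout. The only other point meriting a sentence is the module-theoretic splitting of the finite filtration used for essential surjectivity, which rests solely on the projectivity of its graded quotients and the finiteness of the filtration.
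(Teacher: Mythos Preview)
Your proposal is correct and follows essentially the same approach as the paper: define the functor by $(\{M^{-i,i}e^i\},d_{ij})\mapsto(\oplus_i M^{-i,i}\otimes A^0,\ d+\sum d_{ij})$, check that the degree-$0$ cocycles and coboundaries of $\underline{Hom}_{K\Cal C_A}$ correspond exactly to connection-preserving maps and homotopies, and obtain essential surjectivity by splitting the nilpotent filtration. Your write-up is more explicit about the degree bookkeeping and the reason the filtration splits, and your remark that connectedness is not actually used in this step is accurate---the paper carries it as a standing hypothesis for the section but does not invoke it in this proof either.
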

\begin{proof}
We define a functor $F:K^0\Cal C_A\to (INC_A)$ by 
taking
$(\oplus_i M^{-i,i}\otimes A^0,\nabla)$
for an object $(M^i,d_{i,j})\in K^0\Cal C_A$ with 
$M^i=\oplus_iM^{-i,i}e^i$.
Here $\nabla$ is defined by $(\sum_{i,j}d_{i,j})
\otimes 1+1\otimes d$.
By restricting the above functor, we have a functor
$
Z^0K^0\Cal C_A \to (INC_A).
$
By the definition of $(INC_A)$, 
the following natural homomorphisms are isomorphisms:
\begin{align*}
&Z^0Hom_{K^0\Cal C_A}(M,N)\to Hom_{INC_A}(F(M),F(N)) \\
&H^0Hom_{K^0\Cal C_A}(M,N)\to Hom_{HINC_A}(F(M),F(N))
\end{align*}
Thus the functor $F:Z^0K^0\Cal C_A \to (INC_A)$
and 
$\overline{F}:H^0K^0\Cal C_A \to (HINC_A)$
are fully faithful functors.
We show the essentially surjectivity.
Let $\nabla:M\to A^1\otimes M$ be 
a nilpotent integrable $A$-connection.
Let $F^{\bullet}M$ be a nilpotent filtration of $M$
for the connection $\nabla$. We choose a 
splitting $M\simeq \oplus Gr_F^i(M)$, where
$Gr^i_F(M)=F^i(M)/F^{i+1}(M)$. Let 
$M^{-i,i}=Gr^i_F(M)$ and
$\nabla_{ij}:M^{-i,i}\to A^1\otimes M^{-j,j}$
be the corresponding component of $\nabla$.
We set $M^i=M^{-i,i}e^i$ and
$d_{ij}=\nabla_{ij}\in \underline{Hom}^1_{\Cal C_A}
(M^{i}e^{-i},M^{j}e^{-j})$. Then $(\{M^i\},d_{ij})$
is an object of $K^0\Cal C_A$.
\end{proof}
We define the following homomorphism (1) 
by taking the fiber of $F(M)$ at
the augmentation $\epsilon$:
\begin{align*}
Z^0Hom_{K^0\Cal C_A}(M,N)
\overset{(1)}\longrightarrow 
&Hom_{\bold k}
(F(M)\otimes_{\epsilon} \bold k,F(N) \otimes_{\epsilon} \bold k) \\
& =
Hom_{\bold k}
(\oplus M^{-i,i},\oplus N^{-j,j}).
\end{align*}
\begin{lemma}
The image of (1) is identified with 
$H^0Hom_{K^0\Cal C_A}(M,N)$.
\end{lemma}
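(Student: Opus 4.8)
The plan is to reinterpret $\underline{Hom}^\bullet_{K^0\Cal C_A}(M,N)$ as a de~Rham complex of an integrable nilpotent connection, observe that the fibre map (1) kills coboundaries, and then derive the lemma from injectivity of the induced map on $H^0$, which itself rests on the connectedness of $A$. First I would make the Hom-complex explicit. Since $M=(M^i,d_{ij})$ and $N$ lie in $K^0\Cal C_A$, each $M^i=M^{-i,i}e^i$ is a complex of $\bold k$-vector spaces concentrated in a single degree (so its internal differential vanishes) and $d_{ij}=0$ unless $i>j$. Put $V=\oplus_iM^{-i,i}$ and $W=\oplus_jN^{-j,j}$. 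Using the shift isomorphisms of $\Cal C_A$ and boundedness of $M,N$, one gets for every $k$
$$
\underline{Hom}^k_{K\Cal C_A}(M,N)=\prod_{q,r}\underline{Hom}^{k+q-r}_{\Cal C_A}(M^q,N^r)=\Hom_{\bold k}(V,A^k\otimes W),
$$
and under this identification $D$ is $1\otimes d_A$ plus a sum of terms built from the $d_{ij}$ of $M$ and $N$, each valued in $A^1$; that is, $\underline{Hom}^\bullet_{K^0\Cal C_A}(M,N)=\underline{Hom}^\bullet_{K^b\Cal C_A}(M,N)$ is the de~Rham complex of the integrable nilpotent $A$-connection $\Hom_{\bold k}(V,W)\otimes A^0$ attached by Proposition \ref{DGcat to connection} to $F(M),F(N)$. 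Hence $Z^0$ of this complex is $\Hom_{INC_A}(F(M),F(N))$, its $H^0$ is $H^0\Hom_{K^0\Cal C_A}(M,N)$, and (1) is the restriction to $Z^0$ of the fibre map $\mathrm{ev}_\epsilon\colon\Hom_{\bold k}(V,A^0\otimes W)\to\Hom_{\bold k}(V,W)$, $\varphi\mapsto(\epsilon\otimes 1_W)\circ\varphi$.

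Next I would check that (1) factors through $H^0$. As an augmentation $\epsilon\colon A^\bullet\to\bold k$ is a DGA homomorphism, so it kills $A^i$ for $i\ne0$ and satisfies $\epsilon\circ d_A=0$; therefore $\mathrm{ev}_\epsilon$ annihilates the image of $1\otimes d_A$ and every term of $D$ valued in $A^1$. Thus $\mathrm{ev}_\epsilon$ is a morphism of complexes from $\underline{Hom}^\bullet_{K^0\Cal C_A}(M,N)$ to $\Hom_{\bold k}(V,W)$ placed in degree $0$ with zero differential, so it kills $B^0=D\bigl(\underline{Hom}^{-1}_{K^0\Cal C_A}(M,N)\bigr)$, and (1) induces $\overline{\mathrm{ev}_\epsilon}\colon H^0\Hom_{K^0\Cal C_A}(M,N)\to\Hom_{\bold k}(V,W)$ with the same image as (1).

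The main step is to prove $\overline{\mathrm{ev}_\epsilon}$ injective, and this is where I expect the work to lie. I would filter the Hom-complex by letting $F^p$ consist of the components $\Hom_{\bold k}(M^{-q,q},A^\bullet\otimes N^{-r,r})$ with $r-q\ge p$. Because $d_{ij}=0$ unless $i>j$, the $A^1$-valued terms of $D$ strictly increase $r-q$, so $D$ preserves $F^p$ and acts on $Gr^p_F$ only through $1\otimes d_A$; hence $Gr^p_F=R_p\otimes A^\bullet$ with differential $1\otimes d_A$, where $R_p=\oplus_{r-q=p}\Hom_{\bold k}(M^{-q,q},N^{-r,r})$, and $F^p=0$ for $p\gg0$, $F^p=\underline{Hom}^\bullet$ for $p\ll0$. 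By connectedness of $A$, $H^{-1}(Gr^p_F)=R_p\otimes H^{-1}(A)=0$, while $\mathrm{ev}_\epsilon$ sends $H^0(Gr^p_F)=R_p\otimes H^0(A)$ isomorphically onto its fibre, since $\epsilon$ induces $H^0(A)\overset{\sim}\to\bold k$. Now I would argue by descending induction on $p$ via the short exact sequences $0\to F^{p+1}\to F^p\to Gr^p_F\to0$: the long exact cohomology sequence and the vanishing $H^{-1}(Gr^p_F)=0$ give an exact sequence $0\to H^0(F^{p+1})\to H^0(F^p)\to H^0(Gr^p_F)$, and since $\mathrm{ev}_\epsilon$ is compatible with these and with the corresponding split short exact sequences of fibres, a five-lemma chase carries injectivity of $\overline{\mathrm{ev}_\epsilon}$ from $H^0(F^{p+1})$ and $H^0(Gr^p_F)$ to $H^0(F^p)$, hence eventually to $H^0(\underline{Hom}^\bullet)=H^0\Hom_{K^0\Cal C_A}(M,N)$. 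Together with the previous paragraph this shows the image of (1) is isomorphic to $H^0\Hom_{K^0\Cal C_A}(M,N)$.

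The obstacle is confined to that last step, whose only substantive input is the connectedness of $A$ (used through $H^i(A)=0$ for $i\le-1$): a priori two horizontal sections with the same $\epsilon$-fibre could differ by a $D$-coboundary involving $A^{-1}$, and the vanishing of $H^{-1}$ of the trivial graded pieces is precisely what rules this out. Alternatively, one may first pass to the rigid model $\overline A$ of Lemma \ref{strictly connectedness}, with $\overline A^{<0}=0$ and $\overline A^0=\bold k$; then $F(M)=V$, $B^0=0$, and the statement is immediate, at the cost of checking that this replacement leaves $H^0\Hom_{K^0\Cal C_A}(M,N)$ unchanged.
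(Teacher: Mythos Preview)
Your proposal is correct and follows essentially the same strategy as the paper: first observe that the fibre map kills $B^0$ so that (1) factors through $H^0$, then prove injectivity of the induced map on $H^0$ by a filtration argument that reduces to the trivial graded pieces, where connectedness of $A$ (namely $H^{-1}(A)=0$ and $H^0(A)\simeq\bold k$) settles the claim, and conclude by the five-lemma. The only cosmetic difference is that the paper filters $M$ and $N$ separately by the stupid truncations $N^{\ge i}$, $M^{\le i}$ and runs the induction on nilpotent length in two steps, whereas you filter the Hom-complex directly by the index $r-q$; these amount to the same thing.
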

\begin{proof}
Since the image of $B^0Hom_{K^0\Cal C}(M,N)$ under the functor 
(1) is zero, the functor (1) factors through
$$
H^0Hom_{K^0\Cal C}(M,N)\overset{(2)}\to Hom_{\bold k}
(\oplus_i M^{-i,i},\oplus_j N^{-j,j}).
$$
We show the injectivity of (2) by the induction of 
the nilpotent length of the corresponding connections.

Let $M$ and $N$ be $k$-vector spaces.
Since $A$ is connected, the natural homomorphism
$$
H^0Hom_{K^0\Cal C_A}(M, N)\to
Hom_{K^0\Cal C_A}(\oplus_iM^{-i,i},\oplus_jN^{-j,j} )
$$
is an isomorphism.
Let $N^{\geq i}$ and $N^{\leq i}$ be the stupid filtration of $N$ as
DG complex in $\Cal C_A$ and the quotient of $N$ by $N^{\geq i}$.
Since $A$ is connected, we have the following left exact sequences
\begin{align*}
0\to H^0Hom_{K^0\Cal C_A}(M, N^{\geq i}) &\to
H^0Hom_{K^0\Cal C_A}(M, N) \\
&\to
H^0Hom_{K^0\Cal C_A}(M, N^{\leq i}), \\
0\to H^0Hom_{K^0\Cal C_A}(M^{\leq i}, N) &\to
H^0Hom_{K^0\Cal C_A}(M, N) \\
&\to
H^0Hom_{K^0\Cal C_A}(M^{\geq i}, N).
\end{align*}
By induction on the nilpotent length of $N$ and $M$, 
we have the lemma by 5-lemma.
\end{proof}

\begin{theorem}
\label{connction theorem}
Let $A$ be a connected DGA.
The category $H^0K^0\Cal C_A$ is equivalent to the category
of nilpotent $H^0(B(\epsilon\mid A\mid \epsilon))$-comodules.
\end{theorem}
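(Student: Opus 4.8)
The plan is to derive the statement from the Main Theorem~\ref{first main theorem} by restricting the homotopy equivalence $K^b\Cal C_A\simeq(B-com)^{red,b}$ to the weight-zero part and then passing to $H^0$. Throughout write $B=B(\epsilon\mid A\mid\epsilon)$, and recall from Theorem~\ref{comparison of simp and chen} that $H^{\bullet}(B)=H^{\bullet}(B_{red})$.

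First I would check that the mutually inverse object correspondences $\varphi$ and $\psi$ of Section~\ref{main section} restrict to the relevant subcategories. For $M=(M^i,d_{ij})\in K^0\Cal C_A$, i.e.\ $M^i=M^{-i,i}e^i$, the object $\varphi(M)=s(M)=\oplus_i M^{-i,i}$ is a complex concentrated in degree $0$; conversely, if $N$ is a bounded $B$-comodule whose underlying complex is concentrated in degree $0$, then in $\psi(N)=(\{M^i\},\{d_{ij}\})$ the summand $M^i$ of $Ne^i$ sits in degree $-i$, so $\psi(N)\in K^0\Cal C_A$. Writing $(B-com)^{red,0}$ for the full sub-DG-category of $(B-com)^{red,b}$ on comodules concentrated in degree $0$, it follows that $\psi$ restricts to a DG functor $(B-com)^{red,0}\to K^0\Cal C_A$ which is essentially surjective (via $\varphi$) and, being the restriction of a homotopy equivalence to full subcategories, still induces isomorphisms on all $H^i(\underline{Hom})$; hence it is again a homotopy equivalence. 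A homotopy equivalence of DG categories induces an equivalence of the associated $H^0$-categories (same objects, morphisms the $H^0$ of the $\underline{Hom}$-complexes), so $H^0K^0\Cal C_A$ is equivalent to $H^0((B-com)^{red,0})$.

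It then remains to identify $H^0((B-com)^{red,0})$ with the category of nilpotent $H^0(B)$-comodules. On objects: a degree-$0$ complex $N$ with coaction $\Delta_N\colon N\to B\otimes N$ has zero differential, so the chain map $\Delta_N$ factors through $Z^0(B)\otimes N$; since $A$ is connected the bar spectral sequence gives $H^{<0}(B)=0$, hence $H^0$ commutes with tensor powers of $B$ by K\"unneth, and composing $\Delta_N$ with $Z^0(B)\twoheadrightarrow H^0(B)$ produces a coassociative counital $H^0(B)$-coaction $\bar\Delta_N$ on $N$. It is nilpotent because the filtration $Fil^{\alpha}N=\oplus_{i\ge\alpha}N^ie^{-i}$ of Section~\ref{main section} has graded pieces with trivial coaction. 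On morphisms, one must see that $H^0(\underline{Hom}_{B_{red}-com}(M,N))=\Hom_{H^0(B)-com}(\bar M,\bar N)$: the bar filtration on $\underline{Hom}_{B_{red}-com}$ together with $H^{<0}(B)=0$ collapses the associated spectral sequence so that only $H^0(B)$ contributes in total degree $0$, which is exactly the equalizer defining $H^0(B)$-comodule maps. Finally, every nilpotent $H^0(B)$-comodule lifts to an object of $(B-com)^{red,0}$ by lifting the coaction through $Z^0(B)$ one step at a time along a nilpotent filtration, by induction on its length, using $H^{<0}(B)=0$ to kill the obstructions; this gives essential surjectivity and finishes the proof.

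The main obstacle is this last identification: verifying that passing from degree-$0$ DG-$B$-comodules to honest $H^0(B)$-comodules neither destroys nor creates morphisms, and that every nilpotent $H^0(B)$-comodule lifts. All three points reduce to the vanishing $H^{<0}(B)=0$, which must itself be pulled back from the connectedness of $A$ through the bar spectral sequence of Section~\ref{main section}; granted that, what remains is spectral-sequence and K\"unneth bookkeeping rather than anything conceptually new. A variant staying closer to the preceding results would go through Proposition~\ref{DGcat to connection}: using Lemma~\ref{strictly connectedness} one replaces $A$ by a quasi-isomorphic model with $A^{<0}=0$ and $A^0=\bold k$, so that $(HINC_A)\simeq(INC_A)\simeq H^0K^0\Cal C_A$, and then identifies nilpotent integrable $A$-connections with nilpotent $H^0(B)$-comodules by Chen's iterated-integral / parallel-transport construction; there the obstacle is the same connectedness input, now phrased on the connection side.
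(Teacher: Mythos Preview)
Your overall strategy---restrict the homotopy equivalence of Theorem~\ref{first main theorem} to degree~$0$, pass to $H^0$, and then identify the result with nilpotent $H^0(B)$-comodules---is sound, and your spectral-sequence argument for full faithfulness (i.e.\ $H^0(\underline{Hom}_{B_{red}-com}(M,N))=\Hom_{H^0(B)-com}(\bar M,\bar N)$ via $H^{<0}(B)=0$) is correct and in fact more transparent than the paper's nilpotent induction with the $5$-lemma.

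The genuine gap is in essential surjectivity. Given a nilpotent $H^0(B)$-comodule $(N,\bar\Delta_N)$, you propose to lift $\bar\Delta_N$ to a map $\Delta_N\colon N\to Z^0(B)\otimes N$ and then correct it along a nilpotent filtration ``using $H^{<0}(B)=0$ to kill the obstructions.'' But the obstruction to \emph{strict} coassociativity of any chosen lift lies in $d\bigl((B\otimes B)^{-1}\bigr)\otimes N$, and modifying $\Delta_N$ by an element of $d(B^{-1})\otimes N$ changes this obstruction only by something in the image of a map $B^{-1}\to (B\otimes B)^{-1}$ built from $\Delta_B$ and the already-constructed coactions. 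There is no reason this map should hit the given obstruction modulo cycles; the cohomological vanishing $H^{<0}(B)=0$ says nothing about it. The nilpotent induction does not rescue you: already in the length-two case the off-diagonal piece $\beta\colon N''\to Z^0(B)\otimes N'$ faces exactly this problem.

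The paper's proof---your ``variant''---is precisely engineered to sidestep this. One first replaces $A$ by a quasi-isomorphic sub-DGA $A'$ with $(A')^{<0}=0$ and $(A')^0=\bold k$ (Lemma~\ref{strictly connectedness}); then $B_{red}(A')^{<0}=0$, so $H^0(B_{red}(A'))=Z^0(B_{red}(A'))\subset B_{red}(A')^0$ and there is nothing to lift: coassociativity in $H^0$ \emph{is} coassociativity on the nose. With this strictness in hand the paper writes down the inverse functor explicitly: for an $H^0$-comodule $M$ one forms the cotensor product $\Cal M=\ker\bigl(H^0\otimes M\to H^0\otimes H^0\otimes M\bigr)$, which inherits an integrable nilpotent $A'$-connection from the universal connection on $H^0(B_{red}(A'))$, giving an object of $(INC_{A'})=(HINC_{A'})\hookrightarrow(HINC_A)\simeq H^0K^0\Cal C_A$. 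So the passage to $A'$ is not a stylistic alternative but the substantive step your main line is missing.
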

\begin{proof}
By Lemma \ref{strictly connectedness}, we choose a quasi-isomorphic
sub DGA $A'$
of $A^{\bullet}$ such that
${A'}^{-i}=0$ for $i<0$ and ${A'}^0\simeq \bold k$. 
In this case, the condition (p) is satisfied and the categories
$(INC_{A'})$ and $(HINC_{A'})$ are equivalent. 
Thus we have the following commutative diagram of categories:
$$
\begin{matrix}
(INC_{A'}) & \overset{\alpha}\to & (INC_A) \\
\downarrow & & \downarrow \\
(HINC_{A'}) & \overset{\alpha'}\to & (HINC_A) \\
F_{A'}\downarrow & & \downarrow F_A\\
(H^0(B_{A'})-\text{comod}) &\overset{\alpha''}\to & 
(H^0(B_{A})-\text{comod}).
\end{matrix}
$$
We know that $\alpha'$ is fully faithful and $\alpha''$ is equivalent.
Thus it is enough to show that $F_A$ is fully faithful and 
$F_{A'}$ is equivalent.

We show the fully faithfulness of $F_A$. 
Let $\Cal N,\Cal M$ be $B$-comodule corresponding to the
nilpotent integrable $A$ connections $M$ and $N$.
We set $H^0=H^0(B(\epsilon\mid A\mid \epsilon))$.
By the induction of the nilpotent length, we can show that 
the natural map
$$
H^i\underline{Hom}_{B-com}(\Cal M,\Cal N) 
\to H^i\underline{Hom}_{H^0-com}(H^0(\Cal M),H^0(\Cal N))
$$
is an isomorphism for $i=0$ and injective for $i=1$
using 5-lemma.
Therefore the functor $F_{A}$ is a fully faithful functor.

We introduce a universal
integrable nilpotent connection on $H^0(B(\epsilon\mid A'\mid \epsilon))$
to show the essential surjectivity of $(HINC_{A'})\to (H^0(B_{A'})-comod)$.
We use the isomorphism 
$H^0(B_{red}(A', \epsilon))\simeq
H^0(B(\epsilon\mid A'\mid \epsilon))$
between simplicial bar complex
and Chen's reduced bar complex. 
By the conditions (p) and (r),
$H^0(B_{red}(A', \epsilon))$
 is identified with a subspace
of $B_{red}(A', \epsilon)^0$.
The coproduct on $B_{red}(A',\epsilon)$ induces a
connection
$$
H^0(B_{red}(A',\epsilon))\to (A')^1\otimes H^0(B_{red}(A',\epsilon))
$$
on $H^0=H^0(B_{red}(A',\epsilon))$.
For an $H^0$-comodule $(M,\Delta_M)$,
We define $\Cal M$ as the kernel of the following map:
$$
H^0\otimes M 
\overset{\Delta_{H^0}\otimes 1_M - 1_{H^0}\otimes \Delta_M}\to 
H^0\otimes H^0\otimes M.
$$
Then $\Cal M$ has a structure of $A'$-connection and
$F_{A'}(\Cal M)=M$.
This proves the essentially surjectivity.
\end{proof}
Since the category of nilpotent $H^0$-comodules is stable under 
taking kernels and cokernels, we have the following corollary.
\begin{corollary}
\begin{enumerate}
\item
If $A^{\bullet}$ is a connected DGA, then $\Cal A=H^0K^0\Cal C_A$ is an abelian
category.
\item
Let $A$ and $A'$ be connected
DGA's and 
$\varphi:A\to A'$ a quasi-isomorphism of DGA.
Then the associated categories $(HINC_A)\to (HINC_{A'})$ are
equivalent.
\end{enumerate}
\end{corollary}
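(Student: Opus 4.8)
The plan is to derive both statements from Theorem \ref{connction theorem} and Proposition \ref{DGcat to connection}, so that essentially no new computation is required; the substance is the bookkeeping already announced in the remark preceding the corollary.

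For part (1), by Theorem \ref{connction theorem} the category $\Cal A=H^0K^0\Cal C_A$ is equivalent to the category of nilpotent $H^0(B(\epsilon\mid A\mid\epsilon))$-comodules, so it suffices to prove that this comodule category is abelian. Since $\bold k$ is a field, the full category of comodules over the coalgebra $H^0(B(\epsilon\mid A\mid\epsilon))$ is abelian: kernels and cokernels are formed underlying-$\bold k$-vector-space-wise and inherit a canonical coaction. Hence it is enough to check that the full subcategory of \emph{nilpotent} comodules is closed under the kernel and cokernel taken in all comodules, i.e.\ closed under subcomodules and quotient comodules. If $M$ is nilpotent, with finite filtration $F^\bullet M$ by subcomodules whose graded pieces carry the trivial coaction, and $K\hookrightarrow M$ is a subcomodule, then $F^pK:=K\cap F^pM$ filters $K$ with graded pieces embedding into trivial comodules, hence trivial; if $M\twoheadrightarrow Q$, then the images $F^pQ$ of the $F^pM$ filter $Q$ with graded pieces that are quotients of trivial comodules, hence trivial (a quotient comodule of a trivially-coacting comodule is trivially-coacting). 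Thus nilpotent comodules are closed under kernels and cokernels, so they form an abelian full subcategory, and via the equivalence of Theorem \ref{connction theorem} the category $\Cal A$ is abelian.

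For part (2), fix an augmentation $\epsilon'$ of $A'$ and set $\epsilon=\epsilon'\circ\varphi$, an augmentation of $A$. The DGA homomorphism $\varphi$ induces a morphism of simplicial bar complexes $B(\epsilon\mid A\mid\epsilon)\to B(\epsilon'\mid A'\mid\epsilon')$ compatible with the coproducts and counits. Comparing the two bar spectral sequences, whose $E_1$-terms are $\oplus_{\mid\alpha\mid=i}\bold B_\alpha(\epsilon\mid H^\bullet(A)\mid\epsilon)^p$ resp.\ $\oplus_{\mid\alpha\mid=i}\bold B_\alpha(\epsilon'\mid H^\bullet(A')\mid\epsilon')^p$, and on which $\varphi$ induces an isomorphism of $E_1$-pages (because $H^\bullet(\varphi)$ is an isomorphism), we conclude that $\varphi$ induces a quasi-isomorphism of DG coalgebras; in particular $H^0(B(\epsilon\mid A\mid\epsilon))\to H^0(B(\epsilon'\mid A'\mid\epsilon'))$ is an isomorphism of coalgebras, and corestriction of coaction along it is an equivalence of the associated categories of nilpotent comodules. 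On the other side, $\varphi$ induces a DG functor $K^0\Cal C_A\to K^0\Cal C_{A'}$ by $\underline{Hom}_{KVec_{\bold k}}(V,A\otimes W)\to\underline{Hom}_{KVec_{\bold k}}(V,A'\otimes W)$, hence a functor $H^0K^0\Cal C_A\to H^0K^0\Cal C_{A'}$, which under Proposition \ref{DGcat to connection} is the functor $(HINC_A)\to(HINC_{A'})$ in question. By naturality of the constructions in the proof of Theorem \ref{connction theorem}, this functor is identified with the comodule corestriction functor above, hence is an equivalence.

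I do not expect a serious obstacle: the argument is formal once Theorems \ref{comparison of simp and chen}--\ref{connction theorem} and Proposition \ref{DGcat to connection} are in place. The two points deserving a little care are the closure of nilpotent comodules under quotients in part (1) — for which one genuinely uses that a quotient comodule of a trivially-coacting comodule is trivially-coacting, together with the fact that nilpotency is witnessed by a \emph{finite} filtration — and, in part (2), the verification that $\varphi$ is compatible with the coproduct of the bar complex (so that the induced map is a morphism of coalgebras and not just of complexes) and that the equivalence it produces matches the pushforward functor on connections; both follow directly from the definitions and compatibilities recorded above.
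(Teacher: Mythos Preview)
Your proof is correct and follows the same approach the paper takes: both parts are deduced from Theorem \ref{connction theorem} (together with Proposition \ref{DGcat to connection}) by passing to nilpotent $H^0(B(\epsilon\mid A\mid\epsilon))$-comodules. The paper's own justification is the single sentence preceding the corollary (``Since the category of nilpotent $H^0$-comodules is stable under taking kernels and cokernels, we have the following corollary''), and your argument simply unpacks that sentence; for part (2) your bar-spectral-sequence comparison is exactly the mechanism implicit in the equivalence $\alpha''$ already used inside the proof of Theorem \ref{connction theorem}.
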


\section{Patching of DG category}

In this section, we consider patching of DG categories
and their bar complexes. A typical example for patching appears
as van Kampen's theorem. Let $X$ be a manifold, which is covered by
two open sets $X_1$ and $X_2$. Suppose that $X_{12}=X_1 \cap X_2$ is connected.
Then the fundamental group $\pi_1(X)$ is isomorphic to
the amalgam product 
$\pi_1(X_1)\underset{\pi_1(X_{12})}*
\pi_1(X_2)$. We can interpret this isomorphism as an equivalence of
two categories. The first category $Loc(X)$ is 
a category of local systems
on $X$ and the second is a category
$Loc(X_1)\times_{Loc(X_{12})} Loc(X_2)$
of triples $(\Cal L_1, \Cal L_2,\varphi)$, where
$\Cal L_1,\Cal L_2$ are local systems on $X_1$ and $X_2$ and
$\varphi$ is an isomorphism of local systems 
$\varphi:\Cal L_1\mid_{X_{12}}\to \Cal L_2\mid_{X_{12}}$.

We must be careful in the nilpotent version. Let $Loc^{nil}(X)$
be the category of nilpotent local systems. Then the natural functor
$$
Loc(X)^{nil}\to Loc(X_1)^{nil}\times_{Loc(X_{12})^{nil}}Loc(X_{2})^{nil}
$$
is not an equivalent in general. 
For example, if $X_{12}$ is contractible,
then there might be no filtration on 
$\Cal L_1\mid_{X_{12}}=\Cal L_2\mid_{X_{12}}$ which induces
a nilpotent filtration $F_1$ on
the local system $\Cal L_1$ on $X_1$ and that on the local system
$\Cal L_2$ on $X_2$.

\begin{definition}
\label{def of fiber product of DGAs}
\begin{enumerate}
\item
Let $\Cal A$ be an abelian category. $\Cal A$ is said to be nilpotent
\begin{enumerate}
\item
if there is an object $\bold 1$, and
\item
for any object $M$ in $\Cal A$, there is a filtration 
$F^{\bullet}$ on
$M$ such that the associated graded quotient $Gr^i_F(M)$ is
a direct sum of $\bold 1_A$.
\end{enumerate}
This filtration is called a nilpotent filtration of $M$.
\item
Let $\Cal A_1$ and $\Cal A_2$ be nilpotent abelian categories and 
$\varphi:\Cal A_1 \to \Cal A_2$ be an exact additive functor.
The functor
$\varphi$ is said to be nilpotent 
if for any nilpotent filtration $F^i$ 
on an object $M$, $\varphi(F^i)$ is a nilpotent filtration on
$\varphi(M)$.
\item
Let $\Cal A_2,\Cal A_2,\Cal A_{12}$ be a nilpotent abelian category
and $F_1:\Cal A_1\to \Cal A_{12}$ and $F_2:\Cal A_2\to \Cal A_{12}$
be nilpotent functors.
We define nilpotent fiber product 
$(\Cal A_1\times_{\Cal A_{12}} \Cal A_2)^{nil}$ as the 
full sub category
of $(\Cal A_1\times_{\Cal A_{12}} \Cal A_2)$
consisting of objects $(\Cal L_1, \Cal L_2,\varphi)$
such that there exist nilpotent filtrations $N_1$ and $N_2$
on $\Cal L_1$ and $\Cal L_2$ such that 
$\varphi(F_1(N_1^i))=F_2(N_2^i)$.
\item
Let $A_1,A_2,A_{12}$ be DGA's and $u_1:A_1\to A_{12}$ 
and $u_2:A_{2}\to A_{12}$
be homomorphisms of DGA's. We define a fiber product 
$\widetilde{A}=A_1\times_{A_{12}} A_2$ as
$\widetilde{A}^p=A_1^p \oplus A_2^p \oplus A_{12}^{p-1}$.
We introduce a product of elements
$a=(a_1,a_2,a_{12})$ and $b=(b_1,b_2,b_{12})$ of
$\widetilde{A}^p$ and $\widetilde{A}^q$ as
$$
a\cdot b=(a_1\cdot b_1,a_2\cdot b_2, u_1(a_{1})\cdot b_{12}
+(-1)^{\deg b_2}a_{12}\cdot u_2(b_2)).
$$
By setting 
$s(A_1\times_{A_{12}}A_2)=A_1\oplus A_2\oplus A_{12}e^{-1}$,
this rule can be written as the following simpler rule
$$
(a_1+a_2+a_{12}e^{-1})(b_1+b_2+b_{12}e^{-1})=
a_1b_1+a_2b_2+u_1(a_1)b_{12}e^{-1}+a_{12}e^{-1}u_2(b_2).
$$
\end{enumerate}
\end{definition}
\begin{proposition}
Let $A_1,A_2$ and $A_{12}$ be connected DGA's,
$u_1:A_1\to A_{12}$ and $u_2:A_2\to A_{12}$ homomorphisms of
DGA's and $\widetilde{A}=A_{1}\times_{A_{12}}A_2$.
Assume that there exists 
an augmentation $\epsilon:A_{12}\to \bold k$.
Then the natural functor
$$
H^0K^0\Cal C_{\tilde A} \to (H^0K^0\Cal C_{A_1}
\times_{H^0K^0\Cal C_{A_{12}}}H^0K^0\Cal C_{A_2})^{nil}
$$
is an equivalence of categories.
\end{proposition}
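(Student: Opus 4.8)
The plan is to translate the statement into the language of connections, where, by Proposition~\ref{DGcat to connection}, Theorem~\ref{connction theorem} and the Corollary to the latter, $H^0K^0\Cal C_A$ is described by integrable nilpotent $A$-connections, and then to prove a Mayer--Vietoris principle for the Hom-complexes. First I would record the structure of $\widetilde A=A_1\times_{A_{12}}A_2$: the projections $p_i\colon\widetilde A\to A_i$, $(a_1,a_2,a_{12})\mapsto a_i$, are homomorphisms of DGA's, there is a split exact sequence of complexes $0\to A_{12}[-1]\to\widetilde A\xrightarrow{(p_1,p_2)}A_1\oplus A_2\to0$ in which $A_{12}[-1]$ is a square-zero two-sided ideal (so $\widetilde A$ is the trivial square-zero extension of $A_1\times A_2$ by the bimodule ${}_{u_1}(A_{12}[-1])_{u_2}$), and the two DGA maps $u_1p_1,u_2p_2\colon\widetilde A\to A_{12}$ are chain homotopic through the $(u_1p_1,u_2p_2)$-derivation $h(a_1,a_2,a_{12})=a_{12}$. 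The long exact sequence of the cone shows $\widetilde A$ is again connected; after replacing $A_1,A_2,A_{12}$ by quasi-isomorphic positive rigid models (Lemma~\ref{strictly connectedness} and the Corollary, together with a routine rectification of $u_1,u_2$) one may assume rigidity, which is convenient but, as the argument shows, not essential.

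I would then define the comparison functor $\Phi$. For $M\in K^0\Cal C_{\widetilde A}$ with fibre $\overline M=\oplus_iM^{-i,i}$ and connection $\nabla_M\in\widetilde A^1\otimes\underline{End}(\overline M)$, set $\Phi(M)=(p_1^{*}M,\,p_2^{*}M,\,\varphi_M^{-1})$ with $\varphi_M:=1-h(\nabla_M)\in A_{12}^0\otimes\underline{End}(\overline M)$. A short computation using flatness of $\nabla_M$ and the derivation property of $h$ shows $\varphi_M$ is a closed degree-zero morphism $u_2^{*}p_2^{*}M\to u_1^{*}p_1^{*}M$ over $A_{12}$; since the maps $d_{ij}$ of a DG complex occur only for $i>j$, the element $h(\nabla_M)$ is strictly upper triangular for the grading, so $\varphi_M$ is unipotent and hence an isomorphism in $H^0K^0\Cal C_{A_{12}}$. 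The grading filtration of $M$ is a nilpotent filtration and restricts to matched nilpotent filtrations of $p_1^{*}M$ and $p_2^{*}M$, so $\Phi$ takes values in the nilpotent fibre product; on morphisms $\Phi(g)=(p_1^{*}g,p_2^{*}g)$.

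For full faithfulness I would use that, because objects $M,N$ of $K^0\Cal C_A$ have the special shape $M^i=M^{-i,i}e^i$, one has $\underline{Hom}^{p}_{K^b\Cal C_A}(M,N)=A^{p}\otimes\underline{Hom}_{\bold k}(\overline M,\overline N)$ as graded spaces, with a twisted differential. Tensoring the split exact sequence above with $\underline{Hom}_{\bold k}(\overline M,\overline N)$ yields a short exact sequence of complexes
\begin{align*}
0\to\underline{Hom}_{K^b\Cal C_{A_{12}}}(u_2^{*}p_2^{*}M,u_1^{*}p_1^{*}N)[-1]&\to\underline{Hom}_{K^b\Cal C_{\widetilde A}}(M,N)\\
&\to\underline{Hom}_{K^b\Cal C_{A_1}}(p_1^{*}M,p_1^{*}N)\oplus\underline{Hom}_{K^b\Cal C_{A_2}}(p_2^{*}M,p_2^{*}N)\to0,
\end{align*}
hence a Mayer--Vietoris long exact sequence. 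A dévissage along the finite grading filtration together with $H^{<0}(A)=0$ shows $H^{<0}\underline{Hom}_{K^b\Cal C_A}(X,Y)=0$ for all $X,Y\in K^0\Cal C_A$; feeding this into the long exact sequence in degrees $-1,0$ identifies $\Hom_{H^0K^0\Cal C_{\widetilde A}}(M,N)$ with the kernel of the connecting map
$$
\partial\colon H^0\underline{Hom}_{A_1}(p_1^{*}M,p_1^{*}N)\oplus H^0\underline{Hom}_{A_2}(p_2^{*}M,p_2^{*}N)\longrightarrow H^0\underline{Hom}_{A_{12}}(u_2^{*}p_2^{*}M,u_1^{*}p_1^{*}N).
$$
Computing $\partial$ by lifting a cocycle to $\widetilde A^0\otimes\underline{Hom}_{\bold k}(\overline M,\overline N)$ and applying the differential gives $\partial(g_1,g_2)=u_1^{*}g_1\circ\varphi_M-\varphi_N\circ u_2^{*}g_2$, so $\ker\partial$ is exactly the morphism set of the fibre product; naturality in $M,N$ and compatibility with composition are then formal.

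Essential surjectivity is the heart of the matter, and is where the superscript ``$nil$'' is forced. Given $(\Cal L_1,\Cal L_2,\varphi)$ with matched nilpotent filtrations $N_1,N_2$, I would choose the splittings so that the gradings of $\Cal L_1,\Cal L_2$ are $N_1,N_2$; then $\varphi$ preserves the filtration, its diagonal (associated-graded) part identifies the fibres $\overline{\Cal L}_1\cong\overline{\Cal L}_2=:V$ compatibly with gradings, and $\varphi=1+\gamma$ with $\gamma\in A_{12}^0\otimes\underline{End}(V)$ strictly upper triangular. Writing $\omega_i$ for the connection of $\Cal L_i$ on $V$, horizontality of $\varphi$ becomes precisely the identity needed for $\nabla:=\omega_1+\omega_2+\gamma\in\widetilde A^1\otimes\underline{End}(V)$ to be a flat nilpotent $\widetilde A$-connection of $K^0$-shape, and the resulting $M\in K^0\Cal C_{\widetilde A}$ satisfies $p_i^{*}M\cong\Cal L_i$ and $\varphi_M=\varphi$, so $\Phi(M)\cong(\Cal L_1,\Cal L_2,\varphi)$. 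The main obstacle is exactly that one cannot construct $M$ before arranging a common grading: this is possible only because of the nilpotent-compatibility hypothesis, so the naive fibre product would be strictly too large. The remaining delicate point is the sign-and-variance bookkeeping in the connecting map $\partial$, which must be matched on the nose with the equalizer defining morphisms in the fibre product category.
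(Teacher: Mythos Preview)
Your argument is correct and, for essential surjectivity, it is exactly the paper's: choose compatible nilpotent splittings, identify the fibres, and read off an $\widetilde A$-connection $(\omega_1,\omega_2,\gamma)$ from the two connections and the off-diagonal part of $\varphi$. For full faithfulness the paper says only ``direct calculation''; you unfold this into a Mayer--Vietoris long exact sequence coming from the split extension $0\to A_{12}[-1]\to\widetilde A\to A_1\oplus A_2\to 0$ tensored with $\underline{Hom}_{\bold k}(\overline M,\overline N)$, together with the vanishing $H^{<0}\underline{Hom}_{K^b\Cal C_A}(X,Y)=0$ for $X,Y\in K^0\Cal C_A$. This is more transparent than an unguided computation, and it explains \emph{why} full faithfulness holds: the Hom in $H^0K^0\Cal C_{\widetilde A}$ is literally the equaliser of $u_1^*(-)\circ\varphi_M$ and $\varphi_N\circ u_2^*(-)$, which is the definition of morphisms in the fibre product. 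One small caveat: the ``routine rectification of $u_1,u_2$'' to rigid models is not entirely routine (one must lift the maps along the quasi-isomorphisms of Lemma~\ref{strictly connectedness}), but you correctly observe that rigidity is not needed for your argument, since the Mayer--Vietoris and the vanishing of $H^{<0}$ use only connectedness.
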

\begin{proof}
The fully faithfulness follows from the direct calculation.
We show the essentially surjectivity.
Since $A_\star$ is connected, $H^0K^0\Cal C_{A_\star}$
is equivalent to the category $(HINC_{A_{\star}})$ for $\star=1,2,12$.
Let $M=(M_1, M_2,\varphi)$ be an object of \linebreak
$(HINC_{A_1}\times_{HINC_{A_{12}}}HINC_{A_2})^{nil}$,
where
$$M_p=(M_p,\nabla_p:M_p\to A^1_p\otimes_{A_p^0}M_p)
\in HINC_{A_p}\quad (p=1,2)
$$
is a nilpotent integrable connection.
By the definition of nilpotent fiber product,
there exist filtrations $F_1^{\bullet}$ and 
$F_2^{\bullet}$ on $M_1$
and $M_2$ and an $A^0_{12}$-isomorphism 
$\varphi:u_1(M_1) \to u_2(M_2)$ such that
(1) $\varphi$ is compatible with the connections on $u_1(M_1)$
and $u_2(M_2)$, and (2) the filtrations $u_1(F_1^{\bullet})$
and $u_2(F_2^{\bullet})$ are isomorphic via the isomorphism
$\varphi$.
By the isomorphism, we have an identification
$\bold k\otimes_{\epsilon,A_1^0}M_1\simeq
\bold k\otimes_{\epsilon,A_2^0}M_2=M^{(0)}$.
The $\bold k$-vector space $M^{(0)}$ has a filtration $F^{\bullet}$
induced by
$u_1(F^{\bullet}_1)=u_2(F_2^{\bullet})$. 
Using identification 
$A^0_p\otimes_{\bold k}M^{(0)} \simeq M_p$ 
we have a map $\overline{\nabla}_p:M^{(0)}\to 
A^1_p\otimes_{\bold k}M^{(0)}$
for $p=1,2$. On the other hand, the morphism $\varphi$ defines a
map $\overline{\varphi}:M^{(0)}\subset u_1(M_1)\to 
u_2(M_2)\simeq M^{(0)}\otimes A^0_{12}$.
Thus we have a map 
$$
M^{(0)}\overset{(\overline{\nabla}_1,
\overline{\nabla}_2,\overline{\varphi})}
\longrightarrow (A^1_1\oplus A^1_2 \oplus A^{0}_{12})\otimes M^{(0)}.
$$
By simple calculation, this map gives rise to 
an integrable nilpotent filtration.
This nilpotent filtration give rise to
an object $(M_1,M_2,\varphi)$ of \linebreak
$(HINC_{A_1}\times_{HINC_{A_{12}}}HINC_{A_2})^{nil}$.
\end{proof}

Let $\epsilon:A_{12}\to \bold k$ be an augmentation of $A_{12}$.
By composing the morphism $\varphi_i:A_i \to A_{12}$,
we have a augmentation $\epsilon_i=\epsilon\circ\varphi_1:A_i \to \bold
k$ for $i=1,2$
By composing the natural projection $\widetilde A \to A_i$, 
we have two augmentations 
$e_i:\widetilde A \to \bold k$
for $i=1,2$.
We introduce a comparison copath morphism
$$
p(\epsilon):B(e_1\mid \widetilde{A}\mid e_2) \to \bold k
$$
connecting $e_1$ and $e_2$
associated to $\epsilon$.
Since 
$$
B(e_1\mid \widetilde{A}\mid e_2)^0=\oplus_{\mid \alpha\mid=k}B_{\alpha}^{-k}
$$
and 
$$
\oplus_{\mid \alpha\mid=0}B_{\alpha}^{0}=
\oplus_{\alpha_0}\bold k\overset{\alpha_0}\otimes \bold k
$$
\begin{align*}
\oplus_{\mid \alpha\mid=1}B_{\alpha}^{1} & =
\oplus_{\alpha_0<\alpha_1}\bold k\overset{\alpha_0}\otimes 
\widetilde{A}^1\overset{\alpha_1}\otimes \bold k\\
& =
\oplus_{\alpha_0<\alpha_1}\bold k\overset{\alpha_0}\otimes 
(A_1^1\oplus A_2^1\oplus A_{12}^0)\overset{\alpha_1}\otimes \bold k.
\end{align*}
We define a linear map 
$p(\epsilon):B(e_1\mid \widetilde{A}\mid e_2)^0\to \bold k$
by 
the summation of
(1)
projection to the factor
$\bold k\overset{\alpha_0}\otimes \bold k$ over $\alpha_0$, and
(2)
the composite of
$\epsilon$ and the projection to
$\bold k\overset{\alpha_0}\otimes 
A_{12}^0\overset{\alpha_1}\otimes \bold k$
over $\alpha_0<\alpha_1$. 
We can show that the composite 
$$
B(e_1\mid \widetilde{A}\mid e_2)^{-1}\to 
B(e_1\mid \widetilde{A}\mid e_2)^0\to \bold k
$$
is zero. For example $1\overset{\alpha_0}\otimes x 
\overset{\alpha_1}\otimes 1\in
\bold k\overset{\alpha_0}\otimes A_1^0 
\overset{\alpha_1}\otimes \bold k$ goes to
$$
\epsilon_1(x)\overset{\alpha_1}\otimes 1 +
1 \overset{\alpha_0}\otimes dx\overset{\alpha_1}\otimes 1 +
1 \overset{\alpha_0}\otimes\varphi_1(x)\overset{\alpha_1}\otimes 1 
\in B(e_1\mid \widetilde{A}\mid e_2)^0,
$$
which is an element of the kernel of $p(\epsilon)$.
\begin{definition}
The map $p(\epsilon)$ is called the comparison copath associated to
$\epsilon$.
\end{definition}

\section{Graded case}
We consider a graded version of DGA. Let 
$A=\oplus_{k\geq 0}A_k=\oplus_{k\geq 0}A_k^{\bullet}$ be
a graded DGA over a field $\bold k$. We assume that the image of 
$A_p\otimes A_q$ under the 
multiplication map is contained in $A_{p+q}$.
We introduce a DG category $\Cal C_A^{gr}$ as follows.
Objects are finite direct sum of the form $V^{\bullet}(i)$ where
$V^{\bullet}$ is a complex of $\bold k$-vector space. For objects 
$V^{\bullet}(i)$ and $W^{\bullet}(j)$,
we define
\begin{align*}
\underline{Hom}_{\Cal C^{gr}_A}^{\bullet}
(V^{\bullet}(i),W^{\bullet}(j))=
\begin{cases}
\underline{Hom}_{KVec_{\bold k}}^{\bullet}
(V^{\bullet},A_{j-i}\otimes W^{\bullet})
 &\text{ if }j\geq i \\
0 &\text{ if }j<i. 
\end{cases}
\end{align*}
Then $\Cal C^{gr}_A$ becomes a DG category. The DG category of DG
complexes in $\Cal C^{gr}_A$ is denoted as $K\Cal C_A^{gr}$.
The category $\Cal C_{A^{gr}}$ has a tensor structure
by $V(p)\otimes W(q) = (V\otimes W)(p+q)$.
The full subcategory of the bounded DG complexes is denoted as
$K^b\Cal C_A^{gr}$. By considering $\bold k$ as a graded DGA 
in a trivial way, we have a DG-category $\Cal C_{\bold k}^{gr}$.
For example, for $\bold k$-vector spaces $V,W$, we have
$$
\underline{Hom}_{\Cal C_{\bold k}^{gr}}(V(p),W(q))=
\begin{cases}Hom(V, W) & \text{ if }p=q \\
0 & \text{otherwise}.
\end{cases}
$$
The object $V(p)$ is called the $p$-Tate twist of $V$ and the Tate
weight of $V(p)$ is defined to be $p$.
The category $\Cal C_{\bold k}^{gr}$ is the category of
formal Tate twist of $k$-vector spaces.
\begin{definition}
Let $M=\oplus_i M^{\bullet}_i$
be a graded complex of $k$-vector spaces.
We introduce a homogenization $M^h$ of 
$M$ 
by $\oplus_i(M_i\otimes \bold k(-i))$
as an object in $\Cal C_{\bold k}^{gr}$. 
By this correspondence, the category of graded complex
is equivalent to the category
$\Cal C^{gr}_{\bold k}$.
Under this identification, the object 
$\bold k(p)\otimes \bold k(q)$ is identified with
$\bold k(p+q)$.
\end{definition}
Using the above notations, for
$V^{\bullet}(p),W^{\bullet}(q)\in \Cal C^{gr}_A$, we have
$$
\underline{Hom}_{\Cal C_A^{gr}}(V^{\bullet}(p),W^{\bullet}(q))
=\underline{Hom}_{\Cal C_{\bold k}^{gr}}(V^{\bullet}(p),
A^h\otimes W^{\bullet}(q)).
$$
Let $\epsilon_0:A_0\to \bold k$ be an augmentation of 
$A_0$ and
$\epsilon:A \to \bold k$ a composite of $\epsilon_0$
and the natural projection.
We define the homogeneous bar complex 
$\bold B^{h}(\epsilon\mid A\mid \epsilon)$ in $K\Cal C_{\bold k}$
as follows.
Let $\alpha=(\alpha_0<\cdots< \alpha_n)$ be a sequence of integers.
We define $\bold B_{\alpha}^h\in \Cal C_{\bold k}^{gr}$ by
$$
\bold k\overset{\alpha_0}\otimes A^{h}
\overset{\alpha_1}\otimes \cdots \overset{\alpha_{n-1}}\otimes A^{h}
\overset{\alpha_n}\otimes\bold k,
$$
and $\bold B^{h}=\bold B^{h}(\epsilon\mid A \mid\epsilon)
\in K\Cal C_{\bold k}^{gr}$ by 
the complex
$$
\cdots\to\oplus_{\mid\alpha\mid=1}\bold B_{\alpha}^h\to
\oplus_{\mid\alpha\mid=0}\bold B_{\alpha}^h\to 0.
$$
The associate simple object in $\Cal C_{\bold k}^{gr}$
is denoted as $B^{h}=B^h(\epsilon\mid A \mid \epsilon)$.
Then the Tate weight $w$ part of $B^{h}$ is equal to
the sum of
$$
(\bold k\overset{\alpha_0}\otimes A^{\bullet}_{p_1}
\overset{\alpha_1}\otimes \cdots 
\overset{\alpha_{n-1}}\otimes A^{\bullet}_{p_n}
\overset{\alpha_n}\otimes\bold k )\otimes
\bold Q(-p_1-\cdots -p_{n-1})
$$
where $w=-(p_1+\cdots+p_n)$. 
The weight $w$ part of $B^{h}$
is denoted as $B^{h}(w)$.
We define the homogenized reduced bar complex
$B_{red}^h=B^h_{red}(A,\epsilon)$ in the same way.

Thus $B^h$, $B_{red}^h$ are graded DG coalgebras in 
$\Cal C_{\bold k}$.
We introduce DG category structure on bounded homogeneous 
$B^{h}$ comodules.
\begin{definition}
\begin{enumerate}
\item
Let $V^{\bullet}=\oplus V^{\bullet}(i)$ 
be an object in $\Cal C_{\bold k}^{gr}$.
A homomorphism 
$\Delta:V^{\bullet}\to B^h\otimes V^{\bullet}$ in 
$\Cal C^{gr}_{\bold k}$ 
(i.e. homomorphism of graded complex)
is called 
a coproduct if it satisfies the coassociativity and
counitarity defined in Definition \ref{def of B comod as DG cat}. 
An object $V^{\bullet}$ in $\Cal C_{\bold k}^{gr}$
equipped with a coproduct
is called a homogeneous $B^h$-comodule. We define
$B^h_{red}$ comodules in the same way.
\item
Let $V^{\bullet}$ and $W^{\bullet}$ be homogeneous $B^h$ comodules.
A morphism $f$ in 
$\underline{Hom}_{\Cal C_{\bold k}^{gr}}(V^{\bullet}, W^{\bullet})$
is called a $B^h$ homomorphism if the diagram
$$
\begin{matrix}
V^{\bullet} &\overset{f}\to &W^{\bullet} \\
\Delta_V\downarrow & & \downarrow\Delta_W \\
B^h\otimes V^{\bullet} & \overset{1_{B^h}\otimes f}
\to &B^h\otimes W^{\bullet} \\
\end{matrix}
$$
is commutative. 
\end{enumerate}
\end{definition}

Let $V=V^{\bullet},W=W^{\bullet}$ be homogeneous $B^h$ comodules.
We define the complex of homomorphism 
$\underline{Hom}_{B^h-com}(V,W)$
from $V$ to $W$
by the associate simple complex
of
$$
0\to \underline{Hom}_{\Cal C^{gr}_{\bold k}}(V,W)
\overset{d_H}\to
\underline{Hom}_{\Cal C^{gr}_{\bold k}}(V,B^h\otimes W)
\overset{d_H}\to
\underline{Hom}_{\Cal C^{gr}_{\bold k}}
(V,{B^h}^{\otimes 2}\otimes W)
\overset{d_H}\to \cdots
$$
where $d_H$ is defined similarly to
(\ref{outer hochschild}).
We define the DG category $(B^h-com)^b$ as the category of
bounded homogeneous $B^h$-comodules with 
$\underline{Hom}_{B^h-com}(\bullet, \bullet)$
as the complex of morphisms.
The following proposition is a graded version of Theorem 
\ref{first main theorem}, and Theorem \ref{connction theorem}

\begin{proposition}
\label{main theorem for graded case}
\begin{enumerate}
\item
The DG category $K^b\Cal C_{A}^{gr}$ is homotopy equivalent to
the category of bounded homogeneous 
$B^h(\epsilon\mid A\mid \epsilon)$ comodules.
\item
Assume that the graded DGA $A$ is connected.
Then $H^0K^{0}\Cal C_{A}^{gr}$ is 
equivalent to the category of homogeneous 
$H^0(B^h(\epsilon\mid A\mid \epsilon))$
comodules.
As a consequence, $H^0K^0\Cal C_{A}^{gr}$ is
an abelian category.
\end{enumerate}
\end{proposition}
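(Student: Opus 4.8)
The plan is to transcribe the proofs of Theorem \ref{first main theorem} and of Theorem \ref{connction theorem} into the Tate-graded setting; the only genuinely new ingredient is a uniform bookkeeping of Tate weights, and no computation differs in substance from one already carried out. The underlying mechanism is that the homogenization functor $M\mapsto M^{h}$ from graded complexes to $\Cal C_{\bold k}^{gr}$ is strictly monoidal once one uses the identification $\bold k(p)\otimes\bold k(q)=\bold k(p+q)$, and that the multiplication of $A$ induces a morphism $A^{h}\otimes A^{h}\to A^{h}$ in $\Cal C_{\bold k}^{gr}$ precisely because it sends $A_{p}\otimes A_{q}$ into $A_{p+q}$. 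Everything in Section \ref{main section} and in the proof of Theorem \ref{connction theorem} is built from tensor products of complexes, the multiplication of $A$, and the various $\underline{Hom}$-complexes, so once these carry compatible graded refinements the arguments port over.

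For part (1) I would reuse the correspondence $\varphi:ob(K^{b}\Cal C_{A}^{gr})\to ob((B^{h}-com)^{b})$ defined exactly as the map $\varphi$ of Section \ref{main section}: for $M=(M^{i},d_{ij})$ one forms the iterated compositions $\bold D_{\alpha}$, transports them to $D_{\alpha}$ via the shift isomorphisms, and sets $\Delta=\sum_{\alpha}D_{\alpha}:s(M)\to B^{h}\otimes s(M)$. Since by the definition of $\Cal C_{A}^{gr}$ each structure map $d_{ij}$ is a homomorphism $M^{j}\to A^{h}\otimes M^{i}$ respecting the Tate grading, every $D_{\alpha}$ is automatically a morphism in $\Cal C_{\bold k}^{gr}$ of the correct weight, and the target is exactly the weight-graded bar object $B^{h}$ with its pieces $B^{h}(w)$. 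The verifications that $\Delta$ is a homomorphism of complexes and defines a $B^{h}$-comodule structure, and that the functor $\psi$ on objects and morphisms is a quasi-isomorphism on $\underline{Hom}$-complexes and compatible with composition, are the same computations as in Section \ref{main section}, now performed weight by weight; the filtration $Fil^{\alpha}N=\oplus_{i\ge\alpha}N^{i}e^{-i}$ and the reduction to $M=N=\bold k(p)$ are available because objects of $(B^{h}-com)^{b}$ are finite direct sums of Tate twists of bounded complexes. Finally $\varphi\circ\psi$ and $\psi\circ\varphi$ are homotopic to the identities as before, giving the homotopy equivalence.

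For part (2), assume $A$ is connected in the graded sense, i.e. $H^{i}(A_{k})=0$ for $i<0$ and all $k$, $H^{0}(A_{0})=\bold k$, and $H^{0}(A_{k})=0$ for $k>0$. First I would prove a graded analogue of Lemma \ref{strictly connectedness}: one may replace $A$ by a quasi-isomorphic graded sub DGA $A'$ with ${A'}_{k}^{-i}=0$ for $i<0$, with ${A'}_{0}^{0}=\bold k$, and with ${A'}_{k}$ concentrated in cohomological degrees $\ge 1$ for $k>0$ (apply Lemma \ref{strictly connectedness} to the weight-$0$ DGA $A_{0}$, and in each weight $k>0$ split off degree-one coboundaries and truncate; the result is a subalgebra because products of positive-weight pieces land in cohomological degree $\ge 2$). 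For such $A'$ the graded connection formalism becomes transparent: a homogeneous object of $K^{0}\Cal C^{gr}_{A'}$ is a graded $\bold k$-vector space together with an integrable, Tate-weight-raising, degree-one map, i.e. an integrable nilpotent graded connection over $A'_{1}$. One then defines the graded categories $(INC^{gr}_{A'})$, $(HINC^{gr}_{A'})$ and repeats Proposition \ref{DGcat to connection} and the Lemma following it verbatim — stupid filtration, $5$-lemma, induction on nilpotent length — to obtain $H^{0}K^{0}\Cal C_{A}^{gr}\simeq (HINC^{gr}_{A})$ and full faithfulness of $F_{A}$. For essential surjectivity of $(HINC^{gr}_{A'})\to (H^{0}(B^{h}_{A'})-\text{comod})$ one builds the universal integrable nilpotent graded connection on $H^{0}=H^{0}(B^{h}_{red}(A',\epsilon))$: by the conditions on $A'$ this cohomology sits inside $B^{h}_{red}(A',\epsilon)^{0}$, the homogeneous coproduct induces a connection $H^{0}\to (A'_{1})^{h}\otimes H^{0}$, and for a homogeneous $H^{0}$-comodule $(M,\Delta_{M})$ the object $\Cal M=\Ker(H^{0}\otimes M\to H^{0}\otimes H^{0}\otimes M)$ (equalizer of $\Delta_{H^{0}}\otimes 1$ and $1\otimes\Delta_{M}$) carries an $A'$-connection with $F_{A'}(\Cal M)=M$. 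Combining this with the graded version of Theorem \ref{comparison of simp and chen}, which gives $H^{0}(B^{h}(\epsilon\mid A\mid\epsilon))\simeq H^{0}(B^{h}_{red}(A,\epsilon))$, yields the equivalence; and since $H^{0}(B^{h})$ is a connected graded coalgebra, every homogeneous comodule appearing is nilpotent, hence the category is closed under kernels and cokernels and is therefore abelian.

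The step I expect to be the main obstacle is not any individual computation but the graded analogue of Lemma \ref{strictly connectedness} together with checking that the Tate grading does not destroy the finiteness on which the inductions rest: one must verify that within a fixed Tate weight and cohomological degree the relevant $\underline{Hom}$-complexes and the pieces $B^{h}(w)$ of the bar coalgebra are controlled — this is exactly where positivity ${A'}_{k}^{-i}=0$, rigidity ${A'}_{0}^{0}=\bold k$, and the boundedness built into the objects of $\Cal C_{A}^{gr}$ are used — so that "nilpotent length" remains a well-founded induction parameter and the $5$-lemma arguments apply degreewise. Once that bookkeeping is in place, parts (1) and (2) are weight-graded copies of the arguments of Section \ref{main section} and of the proof of Theorem \ref{connction theorem}.
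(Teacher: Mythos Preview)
Your proposal is correct and follows essentially the same approach as the paper's own proof: both parts are treated as graded transcriptions of Theorem \ref{first main theorem} and Theorem \ref{connction theorem}, with the only new ingredient being a graded version of Lemma \ref{strictly connectedness} (which the paper states as a separate lemma inside the proof, with conditions (p) and (r) now read in the graded sense). The paper is in fact terser than you are---it sketches the $\psi$-direction on objects and then says the rest ``is similar to those in Section \ref{main section}, so we omit the detailed proof''---so your more explicit bookkeeping of Tate weights and your identification of the graded strict-connectedness lemma as the one genuine point to check are exactly on target.
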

\begin{proof}
(1) We set $B^h=B^h(\epsilon \mid A\mid \epsilon)$.
We give a one to one correspondence $ob(B^h-comod)^b\to 
ob(K^b\Cal C^{gr}_{\bold k})$ of objects.
Let $V=\oplus_iV^{\bullet}(i)$ be an object in
$\Cal C^{gr}_{\bold k}$ and
$V\to B^h\otimes V$ be the comultiplication of $V$.
We use the direct sum decomposition
$$
B^h=(\oplus_{\alpha_0}\bold k \overset{\alpha_0}\otimes \bold k)
\oplus
(\oplus_{\alpha_0<\alpha_1}\bold k\overset{\alpha_0}\otimes A^h
\overset{\alpha_1}\otimes \bold ke)
\oplus
\bigoplus_{\mid\alpha \mid=i>1}\bold B^h_{\alpha}e^i.
$$
Let $\pi_{\alpha_0}$ be the projection $B^h\to 
\bold k\overset{\alpha_0}\otimes\bold k$.
Let $p_{\alpha_0}$ be the composite of the map 
$$
V\overset{\Delta_V}\to B\otimes V\overset{\pi_{\alpha_0}}\to
\bold k \overset{\alpha_0}\otimes V
$$
and $V^{\alpha_0}$ be $Im(p_{\alpha_0})e^{\alpha_0}$.
Since the map $p_{\alpha_0}$ is homogeneous,
$V^{\alpha_0}$ is an object in $\Cal C_{\bold k}^{gr}$.
By the assumption of boundedness, $V^{\alpha_0}=0$
except for finite numbers of $\alpha_0$.
Let
$\pi_{\alpha_0,\alpha_1}$ be the projection $B^h\to 
\bold k\overset{\alpha_0}\otimes A^h
\overset{\alpha_0}\otimes\bold k e$ and consider the composite
$D_{\alpha_0,\alpha_1}$ by the composite
$$
V\overset{\Delta_V}\to B\otimes V
\overset{\pi_{\alpha_0,\alpha_1}}\to
\bold k \overset{\alpha_0}\otimes A^h\overset{\alpha_0}\otimes Ve.
$$
By the associativity condition, the morphism
$D_{\alpha_0,\alpha_1}$ induces a morphism
$$
\underline{Hom}_{\Cal C_{\bold k}^{gr}}^1(V^{\alpha_0}e^{-\alpha_0},
A^h\otimes V^{\alpha_1}e^{-\alpha_1})=
\underline{Hom}_{\Cal C_{A}^{gr}}^1(V^{\alpha_0}e^{-\alpha_0},
V^{\alpha_1}e^{-\alpha_1})
$$
which is also denoted as $D_{\alpha_0,\alpha_1}$.
This defines a one to one correspondence 
$ob(B^h-comod)^b\to 
ob(K^b\Cal C^{gr}_{\bold k})$.
For the construction and the proof of the inverse correspondence 
and the homotopy equivalence is similar to those in
Section \ref{main section}, so we omit the
detailed proof.

(2) Using the following lemma, the proof of (2)
is similar.

\begin{lemma}
Let $A$ be a connected graded DGA. 
Then there exists a graded subalgebra $\overline{A}$
of $A$ such that (1) the natural inclusion $\overline{A}\to A$ is a
quasi-isomorphism, and (2)the conditions (p) and (r) 
in Lemma \ref{strictly connectedness} are satisfied.
\end{lemma}

\end{proof}

\section{Deligne complex}

\subsection{Definition of Deligne algebra}
Here we give an application of DG category to Hodge theory.
Let $X$ be a smooth irreducible variety over $\bold C$
and $\overline{X}$ be a smooth compactification of $X$ such that
$D=\overline{X}-X$ is a simple normal crossing divisor.
Let $\Cal U=\{U_i\}_{i\in I}$ be an affine covering of $\overline{X}$
indexed by a totally ordered set $I$, and
$\Cal U_{an}=\{U_{an,j}\}_{j\in J}$ 
be a topological simple covering of $X$ indexed by 
a totally ordered set $J$,
which is a refinement of $\Cal U\cap X$.
Assume that the map $J\to I$ defining the refinement preserves
the ordering of $I$ and $J$.

Let $\Omega_{\overline{X}}^{\bullet}(\log (D))$ be the sheaf of 
algebraic logarithmic de Rham complex of $\overline{X}$ along the 
boundary divisor $D$. Let $F^{\bullet}$ be the
Hodge filtration of $\Omega_{\overline{X}}^{\bullet}(\log (D))$:
$$
F^{i}:0\to \cdots \to 0\to 
\Omega_{\overline{X}}^{i}(\log (D))\to
\Omega_{\overline{X}}^{i+1}(\log (D))
\to \cdots.
$$
Then $F^{\bullet}$ is compatible with the product structure
of $\Omega_{\overline{X}}^{\bullet}(\log (D))$.
The Cech complex 
$\check{C}(\Cal U, \Omega_{\overline{X}}^{\bullet}(\log (D)))$
becomes an associative DGA by standard Alexander-Whitney 
associative product using the total order of $I$.
We define 
$$
A_{FdR,i}=\check{C}
(\Cal U, F^i\Omega_{\overline{X}}^{\bullet}(\log (D))).
$$
Then the product induces a morphism of complex
$$
A_{FdR,i}\otimes A_{FdR,j}\to A_{FdR,i+j}
$$
and by this multiplication $A_{FdR}=\oplus_{i \geq 0} A_{FdR,i}$
becomes a graded DGA, which is called
the algebraic de Rham DGA. We define the homogenized algebraic
de Rham DGA $A^h_{FdR}\in K\Cal C_{\bold k}^{gr}$ as 
$\oplus_{i \geq 0} A_{FdR,i}(-i)$.

Let $\Omega_{X_{an}}^{\bullet}$
be the analytic de Rham complex and  
$\check{C}(\Cal U_{an},\Omega_{X_{an}}^{\bullet})$ 
the topological Cech complex of
$\Omega_{X_{an}}^{\bullet}$.
Since the map $J\to I$ preserve the orderings,
we have a natural quasi-isomorphism of DGA's:
$$
\check{C}(\Cal U, \Omega_{\overline{X}}^{\bullet}(\log (D)))\to
\check{C}(\Cal U_{an},\Omega_{X_{an}}^{\bullet}).
$$
We set $A_{dRan,i}=
\check{C}(\Cal U_{an},\Omega_{X_{an}}^{\bullet})$
and
$A_{dRan}=\oplus_{i\geq 0}A_{dRan,i}$.
Then $A_{dRan}$ becomes a graded DGA
by Alexander-Whitney associative product,
which is called the analytic de Rham DGA. 
We also define
$A_{B,i}=\check{C}(\Cal U_{an},(2\pi i)^{i}\bold Q)$
and set
$A_{B}=\oplus_{i\geq 0}A_{B,i}$, which is called the
Betti DGA. It is a sub graded DGA of 
$A_{dRan}$. We define the homogenized analytic de Rham DGA
$A_{dRan}^h$ 
and the Betti DGA $A_B^h$ as 
$A^h_{dRan}=\oplus_i A_{dRan,i}(-i)$ and
$A^h_B=\oplus_i A_{B,i}(-i)$. 
\begin{definition}[Deligne algebra]
We define the Deligne algebra $A_{Del}=A_{Del}(X)$ of $X$
by the fiber product 
$A_{FdR}\times_{A_{dRan}}A_{B}$.
It is graded by $A_{Del,i}=A_{FdR,i}\times_{A_{dRan,i}} A_{B,i}$.
(See Definition \ref{def of fiber product of DGAs}
 for the definition of fiber
product of DGA's.)
\end{definition}
\begin{example}
In the case of $X=Spec(\bold C)$, the complexes $A_{Del}$ and
$A^h_{Del}$ are equal to
\begin{align*}
A_{Del}^0 & =\bold C\bigoplus \oplus_{i\geq 0}
\bold (2\pi i)^i\bold Q, \quad
A_{Del}^1  =\oplus_{i\geq 0}\bold C,  \\
A_{Del}^{h,0} & =\bold C\bigoplus \oplus_{i\geq 0} 
\bold (2\pi i)^i\bold Q(-i), \quad
A_{Del}^{h,1}  =\oplus_{i\geq 0}\bold C(-i).
\end{align*}
The differentials are the natural maps. 
In this case, the bar spectral sequence degenerates at $E_1$
and
$Gr(H^0(B(\epsilon_B\mid A_{Del} \mid \epsilon_B)))$
is isomorphic to the tensor algebra generated by
$\oplus_{i>0}\bold C/\bold (2\pi i)^i\bold Q$ over $\bold Q$.
\end{example}
\begin{remark}
\begin{enumerate}
\item
We can define $A_{dR}$ for a smooth variety over an arbitrary
subfield $K$ of $\bold C$.
\item
Actually, $A_{Del}(X)$ depends on the choice of the 
compactification $\overline{X}$,
and coverings $\Cal U$ and $\Cal U_{an}$. But the choice
of admissible proper morphisms and refinements 
does not affect up to quasi-isomorphism.
\item
We can replace the role $\bold Q$ in $A_{B}$ by an arbitrary
subfield $F$ of $\bold R$. The corresponding Deligne algebra
is denoted as $A_{Del,F}(X)$.
\end{enumerate}
\end{remark}

Let $p_{dR}$ (resp. $p_{B}$) be a $\bold C$-valued point of 
$X$ (resp. a
point in $X(\bold C)^{an}$). Then we have 
an augmentation $\epsilon_{dR}(p)$
(resp. $\epsilon_{B}(p)$) of $A_{dR}$ (resp. $A_{B}$).
The following proposition is a consequence of Proposition 
\ref{main theorem for graded case}.
\begin{proposition}
The category $H^0K^0C_{A_{Del}}^{gr}$
is equivalent to the category of homogeneous
$H^0(B^h(\epsilon_{B}\mid A_{Del}\mid \epsilon_B))$-comodules.
\end{proposition}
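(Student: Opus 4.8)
The plan is to obtain the statement as a direct application of Proposition \ref{main theorem for graded case}(2) to the graded DGA $A=A_{Del}(X)$ and the augmentation $\epsilon_B$. Two points must be checked: (i) that $A_{Del}$ is a connected graded DGA (it vanishes in negative degrees, $H^i(A_{Del})=0$ for $i<0$, and $H^0(A_{Del})=\bold k$), and (ii) that $\epsilon_B$ defines an augmentation of $A_{Del}$ of the admissible type, i.e. the composite of a DGA augmentation $\epsilon_0\colon A_{Del,0}\to\bold k$ of the weight-zero part with the canonical projection $A_{Del}\to A_{Del,0}$. Granting these, Proposition \ref{main theorem for graded case}(2) applies verbatim and yields the equivalence, together with the fact that the target category is abelian.

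For (ii): a point $p_B\in X(\bold C)^{an}$ gives the Betti augmentation $\epsilon_B(p)\colon A_B\to\bold k$ by restriction of a Cech cochain to $p_B$. This is a homomorphism of graded DGAs which kills Cech degree $>0$ and weight $>0$, hence factors through $A_{B,0}^0=\check C^0(\Cal U_{an},\bold k)$. Precomposing with the canonical projection of graded DGAs $A_{Del}=A_{FdR}\times_{A_{dRan}}A_B\to A_B$ gives the augmentation, again denoted $\epsilon_B$, which by construction has the form $\epsilon_0\circ(\text{projection to }A_{Del,0})$ with $\epsilon_0$ a DGA augmentation of $A_{Del,0}$; this is precisely the datum needed to form $B^h(\epsilon_B\mid A_{Del}\mid\epsilon_B)$.

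For (i): by Definition \ref{def of fiber product of DGAs}(4) the fiber product fits into a short exact sequence of complexes $0\to A_{dRan}[-1]\to A_{Del}\to A_{FdR}\oplus A_B\to 0$, whose long exact (Mayer--Vietoris) sequence reads
\[
\cdots\to H^{p-1}(A_{FdR})\oplus H^{p-1}(A_B)\xrightarrow{u_{1*}-u_{2*}}H^{p-1}(A_{dRan})\to H^p(A_{Del})\to H^p(A_{FdR})\oplus H^p(A_B)\xrightarrow{u_{1*}-u_{2*}}H^p(A_{dRan})\to\cdots.
\]
Each of $A_{FdR},A_B,A_{dRan}$ is concentrated in non-negative degrees (Cech degrees and de Rham degrees are $\geq 0$, and $F^i\Omega_{\overline X}^\bullet(\log D)$ begins in degree $i$), hence so is $A_{Del}$ and $H^p(A_{Del})=0$ for $p<0$. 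Since $H^{-1}(A_{dRan})=0$, the case $p=0$ gives $H^0(A_{Del})=\ker\bigl(H^0(A_{FdR})\oplus H^0(A_B)\to H^0(A_{dRan})\bigr)$. As $X$ is irreducible, hence connected, the weight-$0$ part of this map is $H^0_{dR}(X)\oplus H^0(X,\bold k)\to H^0_{dR}(X^{an})$, i.e. $\bold C\oplus\bold k\xrightarrow{(a,b)\mapsto a-b}\bold C$, with kernel the diagonal copy of $\bold k$; in each weight $w>0$ one has $H^0(A_{FdR})_w=0$ while $H^0(A_B)_w\to H^0(A_{dRan})_w$ is the injection $(2\pi i)^w\bold k\hookrightarrow\bold C$, with kernel $0$. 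Hence $H^0(A_{Del})=\bold k$, concentrated in weight $0$, so $A_{Del}$ is a connected graded DGA.

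The only step that is not purely formal is (i), and within it the substantive input is the identification of the cohomology of the three Cech--de Rham type complexes with the algebraic de Rham, analytic de Rham, and Betti cohomology of $X$, together with the fact that $u_{1*}$ and $u_{2*}$ induce the classical comparison maps; these are what make the Mayer--Vietoris computation of $H^0$ produce exactly $\bold k$. One may also remark that the resulting equivalence does not depend on the auxiliary choices of $\overline X$, $\Cal U$, $\Cal U_{an}$: by an earlier remark these change $A_{Del}$ only up to quasi-isomorphism, and by the corollary to Theorem \ref{connction theorem} a quasi-isomorphism of connected (graded) DGAs induces an equivalence of the associated categories.
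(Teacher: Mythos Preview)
Your proof is correct and follows the same approach as the paper, which simply states that the proposition ``is a consequence of Proposition \ref{main theorem for graded case}'' without further detail. You have supplied the verification of the hypotheses that the paper leaves implicit: that $\epsilon_B$ is an augmentation of the required form, and that $A_{Del}$ is a connected graded DGA, the latter via the Mayer--Vietoris sequence arising from the homotopy fiber product definition of $A_{Del}$.
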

\subsection{Nilpotent variation of mixed Tate Hodge structure}
In this subsection, we prove that the category of nilpotent
variation of mixed Tate Hodge structures on
a smooth irreducible algebraic variety is equivalent to
that of comodules over $H^0(B^h(\epsilon \mid A_{Del}(X)\mid \epsilon))$.

\begin{definition}[VMTHS]
\label{VMTHS}
Let $X$ be a smooth irreducible algebraic variety over $\bold C$.
 A triple $(\Cal F, \Cal V,comp)$ of the following data
is called a variation of mixed Tate Hodge structure on $X$.
\begin{enumerate}
\item
A 4ple $\Cal F=(\Cal F,\nabla,F^{\bullet},W_{\bullet})$
is a locally free sheaf $\Cal F$ on $X$
with a connection logarithmic
connection
$$
\nabla:\Cal F \to \Omega_{\overline{X}}^1(\log(D))\otimes \Cal F
$$
and decreasing and increasing filtrations $F^{\bullet}$ 
and $W_{\bullet}$ on $\Cal F$ with the following properties.
(We assume that the filtration $W$ is indexed by even
integers.)
\begin{enumerate}
\item
$$
\nabla(F^i\Cal F)\subset 
\Omega_{\overline{X}}^1(\log(D))\otimes F^{i-1}\Cal F.
$$
\item
$$
\nabla(W_{-2j}\Cal F) \subset
\Omega_{\overline{X}}^1(\log(D))\otimes W_{-2i}\Cal F.
$$
\end{enumerate}
\item
A pair
$\Cal V=(\Cal V,W_{\bullet})$ is 
a local system with a filtration $W_{\bullet}$
on $X(\bold C)^{an}$.
\item
An isomorphism of local system
$$
\Cal V\otimes \bold C \overset{\simeq}\to (\Cal F\otimes 
\Cal O_{X(\bold C)^{an}})^{\nabla=0}
$$
on $X(\bold C)^{an}$ compatible with the filtrations $W_{\bullet}$
on $\Cal F$ and $\Cal V$.
\item
The fiber
of the associated graded object $(Gr_{-2i}^W\Cal F,
Gr_{-2i}^W\Cal V,comp)$ at $x$
is isomorphic to a sum of mixed Tate Hodge structure of
weight $-2i$ for all $x \in X(\bold C)^{an}$
\end{enumerate}
\end{definition}

\begin{definition}[NVMTHS]
\begin{enumerate}
\item 
A variation of mixed Tate Hodge structure is said to be constant if 
the local system $\Cal V$ is a trivial local system.
\item
A constant variation of mixed Tate Hodge structure is said to be 
split if it is isomorphic to a sum of pure Tate Hodge structures.
\item
A variation of mixed Tate Hodge structure is said to be nilpotent
(denoted as NVMTHS for short)
if there exists a filtration 
$(N_{\bullet}\Cal F,N_{\bullet}\Cal V,comp)$
of $(\Cal F, \Cal V,comp)$ such that
the associated graded object $(Gr_{p}^N\Cal F,
Gr_{p}^N\Cal V,comp)$ is 
a split constant variation of mixed Tate Hodge structures
on $X(\bold C)^{an}$.
\end{enumerate}
\end{definition}

\begin{theorem}
\label{theorem bar and NVMTHS}
The category $H^0K^0C_{A_{Del}}^{gr}$
is equivalent to $NVMTHS(X)$.
It is also equivalent to the category of homogeneous
$H^0(B^h(\epsilon_B\mid A_{Del}\mid \epsilon_B))$-comodules.
\end{theorem}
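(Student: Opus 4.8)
The second assertion, the equivalence of $H^0K^0\Cal C_{A_{Del}}^{gr}$ with the category of homogeneous $H^0(B^h(\epsilon_B\mid A_{Del}\mid\epsilon_B))$-comodules, is Proposition~\ref{main theorem for graded case}(2) applied to $A_{Del}$; here one first checks that $A_{Del}$ is a connected graded DGA, which follows from the connectedness of $A_{FdR}$, $A_B$, $A_{dRan}$ and the Mayer--Vietoris long exact sequence attached to $A_{Del}=A_{FdR}\times_{A_{dRan}}A_B$, the augmentation being $A_{Del}\to A_B\overset{\epsilon_B}\to\bold k$. So the real content is the equivalence $H^0K^0\Cal C_{A_{Del}}^{gr}\simeq NVMTHS(X)$, which I would extract from the fiber product presentation of $A_{Del}$.

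The first step is to observe that the patching equivalence proved in the section on patching of DG categories goes through verbatim for graded DGAs, the graded fiber product being formed degreewise ($A_{Del,i}=A_{FdR,i}\times_{A_{dRan,i}}A_{B,i}$); its hypotheses are met after, if necessary, replacing the three algebras by rigid quasi-isomorphic subalgebras and fixing a base point to supply the augmentation of $A_{dRan}$. This yields
$$
H^0K^0\Cal C_{A_{Del}}^{gr}\ \simeq\
\bigl(H^0K^0\Cal C_{A_{FdR}}^{gr}\times_{H^0K^0\Cal C_{A_{dRan}}^{gr}}H^0K^0\Cal C_{A_B}^{gr}\bigr)^{nil},
$$
so the remaining task is to identify the three building blocks with the expected Hodge-theoretic categories and to match the nilpotent fiber product with Definition~\ref{VMTHS} together with the NVMTHS condition.

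For the identification I would invoke the graded analogues of Proposition~\ref{DGcat to connection} and Theorem~\ref{connction theorem}: each of $A_{FdR}$, $A_B$, $A_{dRan}$ is a connected graded DGA concentrated in non-negative degrees, so $H^0K^0\Cal C_{A_\star}^{gr}$ is equivalent to the homotopy category $(HINC_{A_\star}^{gr})$ of graded integrable nilpotent $A_\star$-connections, which in turn agrees with $(INC_{A_\star}^{gr})$. One then unwinds such connections by Cech descent: since $\Cal U$ is an affine covering of $\overline{X}$ and $\Cal U_{an}$ a contractible simple covering of $X^{an}$, the relevant Cech complexes are acyclic resolutions, so a graded integrable nilpotent connection over $\check{C}(\Cal U,F^{\bullet}\Omega_{\overline{X}}^{\bullet}(\log D))$ is the same datum as a unipotent logarithmic connection $(\Cal F,\nabla,F^{\bullet})$ on $(\overline{X},D)$ of Tate type --- the Hodge filtration $F^{\bullet}$ and Griffiths transversality $\nabla(F^i)\subset\Omega_{\overline{X}}^1(\log D)\otimes F^{i-1}$ being exactly what the grading of $A_{FdR}$ records, and the nilpotent filtration of the connection being the weight filtration $W_{\bullet}$. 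Likewise $H^0K^0\Cal C_{A_B}^{gr}$ becomes the category of unipotent local systems on $X^{an}$ with a $W$-filtration of Tate type, and $H^0K^0\Cal C_{A_{dRan}}^{gr}$ that of unipotent flat analytic connections of Tate type, the two structure functors landing in it being ``restrict to $X^{an}$, analytify and forget $F^{\bullet}$'' and ``tensor with $\Cal O_{X^{an}}$''. Under these identifications an object of the nilpotent fiber product is a log connection $\Cal F$, a $W$-filtered local system $\Cal V$ and a horizontal isomorphism $\Cal V\otimes\Cal O_{X^{an}}\simeq(\Cal F\otimes\Cal O_{X^{an}})^{\nabla=0}$ compatible with $W$, while the superscript ``$nil$'' forces a common nilpotent filtration with split constant Tate-type graded pieces --- precisely the datum $(\Cal F,\Cal V,comp)$ of Definition~\ref{VMTHS} subject to the NVMTHS condition. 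Hence the right-hand category is $NVMTHS(X)$.

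I expect the main obstacle to be this last identification, and within it the Cech--de Rham descent: one must verify that replacing a sheaf of DGAs by its Cech DGA and then forming $H^0K^0\Cal C^{gr}$ produces no spurious objects or morphisms, i.e. that the comparison functor from genuine filtered (logarithmic, analytic, Betti) connections to the DG side is essentially surjective and fully faithful on $H^0$. This rests on the vanishing of higher Cech cohomology coming from affineness, resp. contractibility, of the coverings; but phrasing it cleanly inside the homotopy category of DG complexes over a Cech DGA --- while at the same time checking that the grading of $A_{FdR}$ encodes exactly Griffiths transversality and that the weight filtrations on the three sides are carried into one another by the comparison isomorphism --- is the genuinely delicate point. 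Everything else is bookkeeping built on the patching equivalence, Proposition~\ref{main theorem for graded case} and Theorem~\ref{connction theorem}.
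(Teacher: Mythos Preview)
Your outline is correct in spirit, but the paper takes a more direct route than the patching-theorem strategy you describe. Rather than invoking the abstract equivalence $H^0K^0\Cal C_{\tilde A}\simeq(H^0K^0\Cal C_{A_1}\times_{H^0K^0\Cal C_{A_{12}}}H^0K^0\Cal C_{A_2})^{nil}$ and then identifying each factor by Cech descent, the paper simply unpacks what an object of $K^0\Cal C_{A_{Del}}^{gr}$ \emph{is}: since $A_{Del}^{h,1}$ decomposes as a direct sum of four pieces (Cech $1$-forms in $F^i\Omega^1$, Cech $0$-cochains in $F^i\Cal O$, Betti $1$-cochains, and analytic functions), the structure map $D:V\to A_{Del}^{h,1}\otimes V$ breaks up as a quadruple $(\{\omega_s\},\{f_{st}\},\{\rho_{st}\},\{\varphi_s\})$. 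The integrability condition $dD+D\circ D=0$ then splits into six scalar equations which are \emph{literally} the cocycle relations for $\Cal F$ and $\Cal V$, the flatness of $\nabla$, the compatibility of patching with $\nabla$, and the horizontality of the comparison map. The filtrations $F^{\bullet}$ and $W_{\bullet}$ are read off from the Tate grading, and Griffiths transversality drops out because the grading forces $\omega_s$ to raise Tate weight by at most one. The inverse functor is built by choosing trivializations.

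What your approach buys is modularity: once the graded patching theorem is in hand, the three factors are handled uniformly and the Hodge-theoretic content is isolated in the Cech descent step you flag as delicate. What the paper's approach buys is that it never leaves the concrete data: there is no need to prove a graded version of the patching proposition, no need to check connectedness hypotheses for $A_{dRan}$ over the correct ground field, and the Cech descent is absorbed into the explicit cocycle computation rather than stated as a separate lemma. In effect the paper is carrying out by hand, for this one example, exactly the computation that your patching theorem would package abstractly.
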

By applying the above theorem for the case $X=Spec(\bold C)$, 
we have the following corollary.
\begin{corollary}
The category of mixed Hodge structures is
equivalent to the category of homogeneous 
$H^0(B^h(\epsilon_{B}\mid A_{Del}
(\bold C)\mid \epsilon_B))$-comodules.
\end{corollary}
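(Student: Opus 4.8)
The plan is to obtain the Corollary as the special case $X = \Spec(\bold C)$ of Theorem \ref{theorem bar and NVMTHS}. That theorem already supplies, for an arbitrary smooth irreducible $X$, an equivalence between $NVMTHS(X)$ and the category of homogeneous $H^0(B^h(\epsilon_B \mid A_{Del}(X) \mid \epsilon_B))$-comodules. Taking $X = \Spec(\bold C)$ makes the target coincide verbatim with the comodule category in the Corollary, whose algebra is the $A_{Del}(\bold C)$ computed in the Example above. Thus the only thing left to establish is an equivalence $NVMTHS(\Spec \bold C) \simeq (\text{mixed Tate Hodge structures})$; the phrase ``mixed Hodge structures'' in the statement is to be read in the Tate sense dictated by the Tate grading on $A_{Del}$ and on $\Cal C_{\bold k}^{gr}$.

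First I would specialize Definition \ref{VMTHS} to the one-point base. For $X = \Spec(\bold C)$ one has $\overline{X} = X$, empty boundary $D$, and $\Omega^1_{\overline X}(\log D) = 0$, so the connection $\nabla$ and its compatibility conditions (a) and (b) become vacuous. What remains of a VMTHS is: a finite-dimensional $\bold C$-vector space $\Cal F$ with a decreasing filtration $F^\bullet$ and an increasing (even-indexed) filtration $W_\bullet$; a $\bold Q$-vector space $\Cal V$ with filtration $W_\bullet$; a filtered isomorphism $comp \colon \Cal V \otimes \bold C \simeq \Cal F$; together with condition (4), that each $Gr^W_{-2i}$ is a sum of pure Tate structures of weight $-2i$. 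This is exactly the data of a mixed Tate Hodge structure, and a morphism of VMTHS over the point is exactly a morphism of such structures, so VMTHS over $\Spec(\bold C)$ and mixed Tate Hodge structures agree as categories.

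Next I would show that the nilpotency hypothesis is redundant over a point, so that $NVMTHS(\Spec \bold C)$ equals, rather than merely sits inside, the category just described. Since a point is simply connected, every local system $\Cal V$ on it is trivial, hence every VMTHS over $\Spec(\bold C)$ is constant in the sense of the definition. The nilpotency condition then asks only for a filtration $N_\bullet$ of $(\Cal F, \Cal V, comp)$ whose graded pieces are split, i.e.\ sums of pure Tate Hodge structures. The weight filtration $W_\bullet$ already meets this demand: it is part of the data and compatible with $comp$, hence a legitimate filtration of the triple, and by condition (4) its graded pieces $Gr^W_{-2i}$ are sums of pure Tate Hodge structures, which are split constant. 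Therefore every mixed Tate Hodge structure is nilpotent with $N = W$, while conversely every NVMTHS over the point is a fortiori a VMTHS and hence a mixed Tate Hodge structure.

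The argument is a bookkeeping specialization and I do not anticipate a genuine obstacle; the one place asking for care is precisely the redundancy of the nilpotency condition, which is what makes the two categories equal rather than one a subcategory of the other. Once the identification $NVMTHS(\Spec \bold C) \simeq (\text{mixed Tate Hodge structures})$ is in place, composing it with the equivalence of Theorem \ref{theorem bar and NVMTHS} at $X = \Spec(\bold C)$ gives the stated equivalence with the category of homogeneous $H^0(B^h(\epsilon_B \mid A_{Del}(\bold C) \mid \epsilon_B))$-comodules, proving the Corollary.
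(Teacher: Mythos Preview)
Your proposal is correct and matches the paper's approach: the paper simply states ``By applying the above theorem for the case $X=\Spec(\bold C)$'' and gives no further argument, so you have filled in exactly the specialization that the paper leaves implicit. Your observation that the weight filtration itself witnesses nilpotency over a point is the right way to see that $NVMTHS(\Spec\bold C)$ coincides with mixed Tate Hodge structures rather than being a proper subcategory.
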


\subsection{Proof of Theorem \ref{theorem bar and NVMTHS}
}

We construct a nilpotent variation of mixed Tate Hodge
structure for an object 
$V=\{V^i\}$ in $K^0\Cal C_{A_{Del}}$.
We set $V^i=V^{ip}e^i(p)$,
$V=\oplus_iV^ie^{-i}$ and $V(p)=\oplus_iV^{ip}e^i(p)$.
Then we have $V=\oplus_{i,p}V^{ip}(p)=\oplus_pV(p)$.
Let $\Cal O,\Cal O_{an},\Omega^i$ and $\Omega^i_{an}$
denote $\Cal O_{\overline{X}},\Cal O_{X^{an}},
\Omega^i_{\overline{X}}(\log D)$ and $\Omega_{X^{an}}^i$.
For indices $s,t,u \in J$, the corresponding element 
in $I$ by the map $J\to I$ for refinement is also
denoted as $s,t,u$ for short.
The sum $D$ of the differential
$d_{ij}\in \underline{Hom}_{\Cal C_{A_{Del}}^{gr}}^1
(V^ie^{-i},V^je^{-j})$
defines an element in
\begin{align*}
\underline{Hom}_{\Cal C_{\bold k}^{gr}}^0(V, A^{h,1}_{Del}
\otimes V) =
&
\underline{Hom}_{\Cal C_{\bold k}^{gr}}^0(V, \oplus_i
\prod_{s}\Gamma(U_s,F^i\Omega^1)(-i)\otimes V) \\
\oplus&
\underline{Hom}_{\Cal C_{\bold k}^{gr}}^0(V, \oplus_i
\prod_{s,t}\Gamma(U_{s,t},F^i\Cal O)(-i)\otimes V) \\
\oplus&\underline{Hom}_{\Cal C_{\bold k}^{gr}}^0(V, 
\oplus_i \prod_{s,t}\Gamma(U_{an,s,t}, 
(2\pi i)^i\bold Q_B(-i))\otimes V) \\
\oplus&
\underline{Hom}_{\Cal C_{\bold k}^{gr}}^0(V, \oplus_i
\prod_{s}\Gamma(U_{an,s},\Cal O_{an})(-i)\otimes V). 
\end{align*}
As in the proof of Proposition \ref{DGcat to connection},
$\nabla=D+1\otimes d$ defines an integrable $A_{Del}$-connection
$V\to A^1_{Del}\otimes V$.
We write $D=(\{\omega_s\},\{f_{st}\},\{\rho_{st}\},
\{\varphi_s\})$ according to the above direct sum decomposition, i.e.
$$
\rho_{st} \in \underline{Hom}_{\Cal C_{\bold k}^{gr}}^0(V, \oplus_i \Gamma(U_{an,s,t}, 
(2\pi i)^i\bold Q_B(-i))\otimes V),
$$ 
etc. Then 
as an element of
\begin{align*}
 \underline{Hom}_{\Cal C_{\bold k}^{gr}}^0(V, A^{h,2}_{Del}
\otimes V) =
&
\underline{Hom}_{\Cal C_{\bold k}^{gr}}^0(V, \oplus_i
\prod_{s}\Gamma(U_s,F^i\Omega^2)(-i)\otimes V) \\
\oplus&
\underline{Hom}_{\Cal C_{\bold k}^{gr}}^0(V, \oplus_i
\prod_{s,t}\Gamma(U_{s,t},F^i\Omega^1)(-i)\otimes V) \\
\oplus&
\underline{Hom}_{\Cal C_{\bold k}^{gr}}^0(V, \oplus_i
\prod_{s,t,u}\Gamma(U_{s,t,u},F^i\Cal O)(-i)\otimes V) \\
\oplus&\underline{Hom}_{\Cal C_{\bold k}^{gr}}^0
(V, \oplus_i \prod_{s,t,u}\Gamma(U_{an,s,t,u}, 
(2\pi i)^i\bold Q_B(-i))\otimes V) \\
\oplus&
\underline{Hom}_{\Cal C_{\bold k}^{gr}}^0(V, \oplus_i
\prod_{s}\Gamma(U_{an,s},\Omega_{an}^1)(-i)\otimes V) \\
\oplus&
\underline{Hom}_{\Cal C_{\bold k}^{gr}}^0(V, \oplus_i
\prod_{s}\Gamma(U_{an,s,t},\Cal O_{an})(-i)\otimes V), 
\end{align*}
we have
\begin{align*}
dD=&
(\{d\omega_s\},
\{-\omega_t+\omega_s+df_{st}\},
\{-f_{tu}+f_{su}-f_{st}\},\\ &
\{-\rho_{tu}+\rho_{su}-\rho_{st}\},
\{\omega_s+d\varphi_s\},\{f_{st}-\rho_{st}+\varphi_t-\varphi_s\})
\end{align*}
and
\begin{align*}
D\circ D=&
(\{\omega_s \omega_s\},
\{-\omega_sf_{st}+f_{st}\omega_t\},
\{f_{st}f_{tu}\},\\ &
\{\rho_{st}\rho_{tu}\},
\{-\omega_s\varphi_s\},\{-f_{st}\varphi_t+\varphi_s\rho_{st}\}).
\end{align*}
Here the products are tensor products of composites of
endomorphisms of $V$ and exterior products of differential forms.
Then by the integrability condition, we have $dD=D\circ D$.

By this relation, we have cocycle relations
\begin{equation}
(1+f_{su})=(1+f_{st})(1+f_{tu}),\quad
(1+\rho_{su})=(1+\rho_{st})(1+\rho_{tu}),
\end{equation}
integrability of connections $d\omega_s=\omega_s\omega_s$,
and the following commutative diagrams:
\begin{equation}
\label{patching comparison}
\begin{matrix}
\oplus_pV(p)\otimes \Gamma(U_{an,s,t},(2\pi i)^{-p}\bold Q)
& \overset{1+\rho_{st}}\to & 
\oplus_pV(p)\otimes \Gamma(U_{an,s,t},(2\pi i)^{-p}\bold Q) \\
1+\varphi_t\downarrow & & \downarrow 1+\varphi_s\\
V\otimes \Gamma(U_{an,s,t},\Cal O_{an})& \overset{1+f_{st}}\to & 
V\otimes \Gamma(U_{an,s,t},\Cal O_{an}) \\
\end{matrix}
\end{equation}
\begin{equation}
\label{patch connection}
\begin{matrix}
V\otimes \Gamma(U_{s,t},\Cal O)& \overset{1+f_{st}}\to & 
V\otimes \Gamma(U_{s,t},\Cal O) \\
\omega_t+1\otimes d\downarrow & & \downarrow \omega_s+1\otimes d\\
V\otimes \Gamma(U_{s,t},\Omega^1)& \overset{(1+f_{st})\otimes 1}\to & 
V\otimes \Gamma(U_{s,t},\Omega^1) \\
\end{matrix}
\end{equation}
\begin{equation}
\label{horizontality}
\begin{matrix}
V\otimes \Gamma(U_{an,s},\Cal O_{an})& \overset{1\otimes d}\to & 
V\otimes \Gamma(U_{an,s},\Omega^1_{an}) \\
1+\varphi_s\downarrow & & \downarrow 1+\varphi_s\\
V\otimes \Gamma(U_{an,s},\Cal O_{an})& 
\overset{\omega_{s}+1\otimes d}\to & 
V\otimes \Gamma(U_{an,s},\Omega^1_{an}) \\
\end{matrix}
\end{equation}

By the cocycle relations, we have a local system
$\Cal V$ and locally free sheaf $\Cal F$ 
by patching constant sheaves $V$ and free sheaves
$V\otimes \Cal O$ on $U_{an,s}$ and $U_{s}$
by the patching data $1+\rho_{st}$ and $1+f_{st}$.
The connection $\omega_s+1\otimes d$ defines an integrable
connection on $V\otimes \Gamma(U_s,\Cal O)$ by the integrability
condition and it is patched together to a global connection
by the commutative
diagram (\ref{patch connection}). The local
sheaf homomorphisms $1+\varphi_s$ patched together into
a global homomorphism $comp$ of sheaves 
$\Cal V\to \Cal F\otimes \Cal O_{an}$
on $X_{an}$ by the commutative diagram (\ref{patching comparison}).
Via the sheaf homomorphism $comp$,
the local system $\Cal V\otimes \bold C$ 
is identified with the subshaef of horizontal sections 
of $\Cal F\otimes \Cal O_{an}$ by
the commutative diagram (\ref{horizontality}).

We introduce filtrations $F^{\bullet}$ and $W_{\bullet}$
on $U_s$ and $U_{an,s}$ by
\begin{align*}
&F^i \Cal F=\oplus_{p\geq i} V(-p)\otimes \Cal O, \\ 
&W_{2i} \Cal F=\oplus_{p\leq i} V(-p)\otimes \Cal O, \\
&W_{2i} \Cal V=\oplus_{p\leq i} V(-p). \\
\end{align*}
The filtration $F$ gives rise to a well defined filtration
on $\Cal F$ since the patching data $f_{ij}$ for $\Cal F$
is contained in $\Gamma(U_{s,t},A^{h,0}_{FdR})=
\Gamma(U_{s,t},\Cal O(0))$.

Since $\omega$ is contained in  
\begin{align*}
&
\bigoplus_pHom_{\bold k}(V(p),V(p)\otimes \Gamma(U_s,\Omega^1)) \\
& \oplus\bigoplus_p
Hom_{\bold k}(V(p),V(p+1)\otimes \Gamma(U_s,\Omega^1)(-1)),
\end{align*}
the Griffiths transversality condition in
Definition \ref{VMTHS} is satisfied.
It defines a mixed Tate Hodge structure at any point $x$
in $X(\bold C)^{an}$ by the definitions
of two filtrations. Therefore $(\Cal F, \Cal V,comp)$
defines a variation of mixed Tate Hodge structure on $X$.

We set $N^iV=\oplus_{i\leq p}V^p$. Then this filtration
defines a filtration on the variation of mixed Tate Hodge
structure $(\Cal F, \Cal V,comp)$ and the associated
graded variation is split constant mixed Tate Hodge structures
on $X$. Thus we have the required nilpotent variation of
mixed Tate Hodge structure $(\Cal F,\Cal V,comp)$ on $X$.

We can construct an inverse correspondence by
attaching an $A_{Del}$-connection to a nilpotent
variation of mixed Tate Hodge structures as follows.
We choose sufficiently
fine Zariski covering $\Cal U=\{U_i\}$ such that the restrictions
of $\Cal F$ are free on each $U_i$.
Using two filtrations $F$ and $W$, $\Cal F$ splits
into $\oplus_p\Cal F(p)$ as an $\Cal O$ module.
We choose a trivialization of $\Cal F(p)\mid_{U_i}$.
By taking a refinement $\Cal U_{an}$ of $\Cal U\cap X_{an}$,
we may assume that $\Cal U_{an}$ is a simple covering.
By restricting $U_{an,s}$, we choose a trivialization $1+\varphi_i$
of the local system $\Cal V$. At last we choose a nilpotent
filtration $N^{\bullet}$ compatible with the connection and
local system and a splitting of $N^{\bullet}$.
Using these data we construct
an integrable nilpotent $A_{Del}$-connection 
$D=(\{\omega_s\},\{f_{st}\},\{\rho_{st}\},
\{\varphi_s\})$ by the 
commutative diagram 
(\ref{patching comparison}),
(\ref{patch connection}),
(\ref{horizontality}).

By passing to homotopy equivalence classes of morphisms,
we have the required equivalence of two categories.

\end{document}